\theoremstyle{plain}
\newtheorem{thm}[equation]{Theorem}
\newtheorem{rmk}[equation]{Remark}
\newtheorem{cor}[equation]{Corollary}
\newtheorem{prop}[equation]{Proposition}
\newtheorem{lem}[equation]{Lemma}
\newtheorem{dfn}[equation]{Definition}
\numberwithin{equation}{section}
\newcommand{\sm}{\left(\begin{smallmatrix}}
\newcommand{\esm}{\end{smallmatrix}\right)}
\newcommand{\bpm}{\begin{pmatrix}}
\newcommand{\ebpm}{\end{pmatrix}}
\newcommand{\C}{\mathbb{C}}
\newcommand{\G}{\mathbb{G}}
\newcommand{\Q}{\mathbb{Q}}
\newcommand{\R}{\mathbb{R}}
\newcommand{\Z}{\mathbb{Z}}
\newcommand{\A}{\mathcal{A}}
\newcommand{\bH}{\mathbb{H}}
\newcommand{\bsl}{\backslash}
\newcommand{\op}{\operatorname}
\newcommand{\BR}{\op{BR}}\newcommand{\BMS}{\op{BMS}}\newcommand{\SO}{\op{SO}}
\newcommand{\PSU}{\op{PSU}}
\renewcommand{\G}{\Gamma}\newcommand{\PS}{\op{PS}}\newcommand{\PSL}{\op{PSL}}\newcommand{\T}{\op{T}}
\newcommand{\ba}{\backslash}
\newcommand{\e}{\epsilon}
\newcommand{\z}{\Z}\newcommand{\br}{\R}
\renewcommand{\c}{\C}
\newcommand{\la}{\langle}\newcommand{\Res}{\op{Res}}
\renewcommand{\P}{\mathcal P}
\newcommand{\ra}{\rangle}
\newcommand{\be}{\begin{equation}}
\newcommand{\ee}{\end{equation}}
\newcommand{\s}{\bf s}\newcommand{\mC}{\mathcal C}
\begin{document}

\title[Effective count for Apollonian circle packings]
{Effective circle count for Apollonian packings and
closed horospheres}

\author{Min Lee and Hee Oh}

\address{Mathematics department, Brown university, Providence, RI}\email{minlee@math.brown.edu}
\address{Mathematics department, Brown university, Providence, RI
and Korea Institute for Advanced Study, Seoul, Korea}\email{heeoh@math.brown.edu}

\thanks{The authors are respectively  supported in parts by Simons Fellowship and by NSF Grant \#1068094.}

\begin{abstract}
The main result of this paper is an effective count for Apollonian circle packings that are either
bounded or contain two parallel lines.
We obtain this by proving an effective equidistribution of closed horospheres
in the unit tangent bundle of a geometrically finite hyperbolic $3$-manifold, whose fundamental
group has critical exponent bigger than $1$. We also discuss applications to Affine sieves.
Analogous results for surfaces are treated as well.
%Non-effective versions of equidistribution and counting in this context were known previously. 
\end{abstract}

\maketitle
%\tableofcontents
\section{Introduction}
\subsection{Apollonian circle packings}
An Apollonian circle packing is an ancient Greek construction which
is made by repeatedly inscribing circles into the triangular interstices of four mutually tangent circles in the plane.
In recent years, there have been many new and exciting developments
in the study of Apollonian circle packings; for instance,
 see \cite{G1}, \cite{G2}, \cite{S}, \cite{BGS2}, \cite{KO}, \cite{BF}, \cite{BK},
 \cite{OS}, \cite{OhShahcircle}, \cite{OhICM}, etc.

The main goal of this paper is to obtain an effective version of 
 the counting theorem for circles in an Apollonian packing with bounded curvature. 

Let $\P$ be an Apollonian circle packing, that is either bounded or lies between two parallel lines
(i.e., congruent to the packing in Figure \ref{f2}).  
For $T>0$ and $\P$ bounded, we define the circle counting function as follows:
$$N_T(\P):=\#\{C\in \P: \text{Curv}(C)<T\}$$
 where $\text{Curv}(C)$ denotes the curvature of $C$, i.e., the reciprocal of the radius of $C$.
 For $\P$ unbounded between two parallel lines, we adjust the definition of $N_T(\P)$
to count circles only in a fixed period. %Note that $N_T(\P)<\infty$ for each $T>0$.

\medskip The main term in the asymptotic for $N_T(\P)$ will be described in terms of
the residual set of $\P$ (=the closure of the union of all circles in $\P$), denoted by $\Res(\P)$.
We denote by $\alpha$ the Hausdorff dimension of $\Res(\P)$; $\alpha$ is independent of $\P$ and known to be approximately
$1.30568(8)$ \cite{Mc}.
Let $\mathcal H^\alpha(\Res(\P))$
be the $\alpha$-dimensional Hausdorff measure of $\Res(\P)$ for bounded $\P$.
 For $\P$ between two parallel lines, we let $\mathcal H^\alpha(\Res(\P))$ be the measure of $\Res(\P)$ in a fixed period.

The error term in our asymptotic formula depends directly on the $L^2$-spectral gap of 
the complete hyperbolic $3$ manifold whose fundamental group
is the symmetry group of $\P$. The group $\PSL_2(\c)$ acts on the extended plane by linear fractional
 transformations. Set $$\mathcal A_\P:=\{g\in \PSL_2(\c): g(\P)=\P\} .$$
It is known that $\mathcal A_P$ is a geometrically finite discrete subgroup of $\PSL_2(\c)$ with 
critical exponent equal to $\alpha$ (cf. \cite{KO}). The fact $\alpha$ is strictly bigger than $1$ yields that
$\alpha(2-\alpha)$ is the smallest eigenvalue of the Laplacian $\Delta$ on the $L^2$-spectrum of the hyperbolic
manifold $\mathcal A_\P\ba \bH^3$ by Sullivan \cite{Su} and is also isolated by Lax and Phillips \cite{LP}.
Hence there exists $1<s_1<\alpha$ such that there is no eigenvalue of $\Delta$ in $L^2(\mathcal A_\P\ba \bH^3)$
 between $\alpha(2-\alpha)$ and $s_1(2-s_1)$.
Since all $\A_\P$'s are conjugate to each other by elements of $\PSL_2(\c)$, $s_1$ is independent of $\P$.

\medskip

Our effective counting result, which is a special case of our more general theorem (Theorem \ref{ec2m}),
 can be stated as follows:
\begin{thm} \label{m} As $T\to \infty$,
$$N_T(\P)= c_{A} \cdot \mathcal H^{\alpha}(\Res (\P)) \cdot T^{\alpha} + O(T^{\alpha -\tfrac{2(\alpha -s_1)}{63}})$$
where $c_A>0$ is a constant independent of $\P$.
\end{thm}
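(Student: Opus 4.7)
I would follow the equidistribution-to-counting paradigm of Eskin--McMullen, adapted to the infinite-volume setting by Oh--Shah and Kontorovich--Oh. Fix a base circle $C_0 \in \P$ and let $H := \text{Stab}_G(C_0)$ inside $G := \PSL_2(\C)$; since $C_0$ bounds a geodesic plane in $\bH^3$, the group $H$ is conjugate to $\PSL_2(\R)$ up to a reflection. Every circle in $\P$ has the form $\gamma^{-1}(C_0)$ for a unique coset $\gamma \in (H \cap \Gamma) \backslash \Gamma$, with $\Gamma := \mathcal{A}_\P$, and a direct computation converts the curvature bound $\text{Curv}(\gamma^{-1} C_0) < T$ into membership $H\gamma \in B_T$ for a family of bounded sets $B_T \subset H \backslash G$ that are essentially geodesic pushouts of a fixed base set: $B_T \approx a_{\log T} B_1$ under the geodesic flow $a_t$. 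Thus
$$N_T(\P) \;\sim\; \#\bigl\{\gamma \in (H\cap\Gamma)\backslash\Gamma : H\gamma \in B_T\bigr\}.$$

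To evaluate this lattice count, I would form the $\Gamma$-periodic function $F_T(g) := \sum_\gamma \mathbf{1}_{B_T}(H\gamma g)$ on $\Gamma \backslash G$ and pair it with a non-negative bump function $\psi_\epsilon$ concentrated on an $\epsilon$-ball around the identity coset. An unfolding identity rewrites $\langle F_T, \psi_\epsilon \rangle_{\Gamma\backslash G}$ as
$$\int_{B_T} \Big( \int_{(H\cap\Gamma)\backslash H} \psi_\epsilon(hg) \, dh \Big) \, dg,$$
and pushing out the geodesic direction from $H$ via an Iwasawa-type decomposition reduces the inner integral, for $g$ deep inside $B_T$, to the integral of $\psi_\epsilon$ along a translate of a closed horospherical piece at geodesic time $t = \log T$. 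The decisive input is then effective equidistribution of such closed horospheres against the Burger--Roblin measure:
$$\int_{Y_t} \psi \;=\; \frac{m^{\BR}(\psi)}{m^{\BMS}(\Gamma \backslash G)} \;+\; O\!\bigl(\mathcal{S}(\psi) \cdot e^{-(\alpha - s_1)t}\bigr)$$
for a suitable Sobolev norm $\mathcal{S}$. Integrating this against a measure on $B_T$ yields the main term $c_A \, \mathcal{H}^\alpha(\Res(\P)) \, T^\alpha$ (the Hausdorff measure entering through the identification of the horospherical conditionals of $m^{\BR}$ with the Patterson--Sullivan density), plus an error of size $\mathcal{S}(\psi_\epsilon) \cdot T^{\alpha - (\alpha - s_1)}$.

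The main technical burden is the displayed effective horospherical equidistribution in the geometrically finite, infinite-volume setting, and this is precisely where the hypothesis $\alpha > 1$ enters: Sullivan identifies $\alpha(2-\alpha)$ as the bottom of the $L^2$-spectrum of $\Delta$ on $\mathcal{A}_\P\ba\bH^3$, and Lax--Phillips provides the spectral gap $s_1 < \alpha$. I would spectrally expand the horospherical period, isolate the $\alpha$-eigenfunction contribution as the main term, treat the finitely many exceptional discrete eigenvalues as a power-saving error, and handle the continuous spectrum above $1$ via Sobolev norms combined with a thick-thin decomposition and Eisenstein-series estimates in the cusps. I expect this cusp estimate for the continuous spectrum to be the single hardest step, as it governs the explicit Sobolev exponent that is tolerated.

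Finally, to recover $N_T(\P)$ from $\langle F_T, \psi_\epsilon \rangle$ I would use a sandwich argument: the smoothing contributes an error of order $\epsilon^{\alpha} T^{\alpha}$, coming from the $\alpha$-Hausdorff regularity of the residual set near the boundary of $B_T$, while $\mathcal{S}(\psi_\epsilon) \sim \epsilon^{-k}$ for a fixed Sobolev order $k$ dictated by the effective equidistribution. Optimizing $\epsilon = T^{-\eta}$ to balance $\epsilon^\alpha T^\alpha$ against $\epsilon^{-k} T^{\alpha - (\alpha - s_1)}$ produces the stated saving $\tfrac{2(\alpha - s_1)}{63}$, the denominator $63$ being the precise numerical consequence of the Sobolev order $k$ needed to control the continuous spectrum in the previous step.
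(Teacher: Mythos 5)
Your setup diverges from the paper's in a way that creates a genuine gap in the reduction step, not just a cosmetic difference.

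You take $H := \op{Stab}_G(C_0)$ for a single circle $C_0$, so $H$ is (a conjugate of) $\PSL_2(\R)$, a \emph{symmetric} subgroup with $A \subset H$. The orbit $\G\ba\G H$ is then a closed geodesic plane in $\T^1(\G\ba\bH^3)$, and the natural expansion is via translates of this geodesic plane --- not closed horospheres. Your claim that "pushing out the geodesic direction from $H$ via an Iwasawa-type decomposition reduces the inner integral ... to the integral of $\psi_\epsilon$ along a translate of a closed horospherical piece" is geometrically incorrect: geodesic pushouts of a geodesic plane stay geodesic planes, and in fact $a_t$ normalizes $H$, so $Ha_t = H$. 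What you have set up is the framework of \cite{OhShahcircle}, which requires (currently only non-effective) equidistribution of expanding planes, not Theorem \ref{main2}.

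The reason the paper genuinely gets closed horospheres is the Descartes circle theorem. Instead of a single circle, the paper takes $v_0 \in \Z^4$ to be the curvature vector of a \emph{Descartes quadruple} of mutually tangent circles; $v_0$ is a null vector for the signature $(3,1)$ Descartes form $Q$, and the key fact (stated in Section 8, attributed to \cite{KO}) is that the stabilizer of $v_0$ in $G$ under the spin map $\iota: G \to \SO_Q$ is $NM$, a parabolic-type subgroup, not $\PSL_2(\R)$. Folding the counting function along $(N\cap\G)\ba\G$ and unfolding against a bump function $\Psi_{\G,\e}$ therefore produces an $N$-integral, i.e.\ a closed-horosphere integral over $(N\cap\G)\ba N$, which is exactly what Theorem \ref{main2} controls. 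With your choice of $H$, the unfolded inner integral is over $(H\cap\G)\ba H$, and there is no way to turn that into an $N$-integral.

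Two smaller misfires. First, the "Eisenstein-series estimates in the cusps" you propose for the continuous spectrum are an artifact of the finite-volume picture; here the tempered part $\mathcal{W}$ of $L^2(\G\ba G)$ is handled by a uniform matrix-coefficient decay bound $|\la a_y.\psi_1^\perp, \psi_2\ra| \ll y^{n-1-s_1}\mathcal S(\psi_1)\mathcal S(\psi_2)$ (Lemma \ref{l:matrix_coeff}), not by spectral expansion against Eisenstein series. Second, the smoothing error you estimate as $\e^{\alpha}T^{\alpha}$ should be $\e^{q_\Omega}T^\alpha$ with $q_\Omega$ as in \eqref{qo}; for $\Omega = K$ (no angular restriction) one has $q_K = 1$, so it is $\e T^\alpha$, and the balance that produces the exponent $2(\alpha - s_1)/63 = 8\s_0/(n(n+9)(2n+1))$ for $n=3$, $q_\Omega = 1$ comes from the Sobolev norm $\mathcal S_{2n-1}(\Psi_\e) = O(\e^{-(n^2+9n-4)/4})$ of the thickening bump on $\G\ba G$, not from the $\alpha$-Hausdorff regularity of $\Res(\P)$. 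The Hausdorff measure $\mathcal H^\alpha(\Res(\P))$ enters only at the very end, by citing \cite{OhShahcircle} and \cite{Oh} to re-interpret the constant; the paper does not re-derive that identification.
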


\begin{rmk}\rm
 \begin{enumerate}
  \item In \cite{KO}, the asymptotic
$ N_T(\P)\sim c_{\mathcal P} \cdot T^{\alpha} $ was obtained with less clear interpretation of the constant $c_{\mathcal P}$.

\item A similar type of asymptotic formula  
was obtained in \cite{OhShahcircle} for all Apollonian packings (whether bounded or not) by counting circles
in a bounded region, but with no error term.

\item
There are several different ways of understanding the constant
$c_{A} \cdot \mathcal H^{\alpha}(\Res (\P))$ in front of the main term, due to different approaches to the counting problem.
One description is given in our paper (see \eqref{cpp}). The aforementioned paper \cite{OhShahcircle} gives
another expression as well.

\item An Apollonian packing $\P$ is called integral if the curvatures of all circles in $\P$ are integers.
Any integral Apollonian packing is known to be either bounded or lies between two parallel lines. Therefore
Theorem \ref{m} applies to all integral Apollonian packings.
 \end{enumerate}

\end{rmk}

\begin{figure} \includegraphics [width=2in]{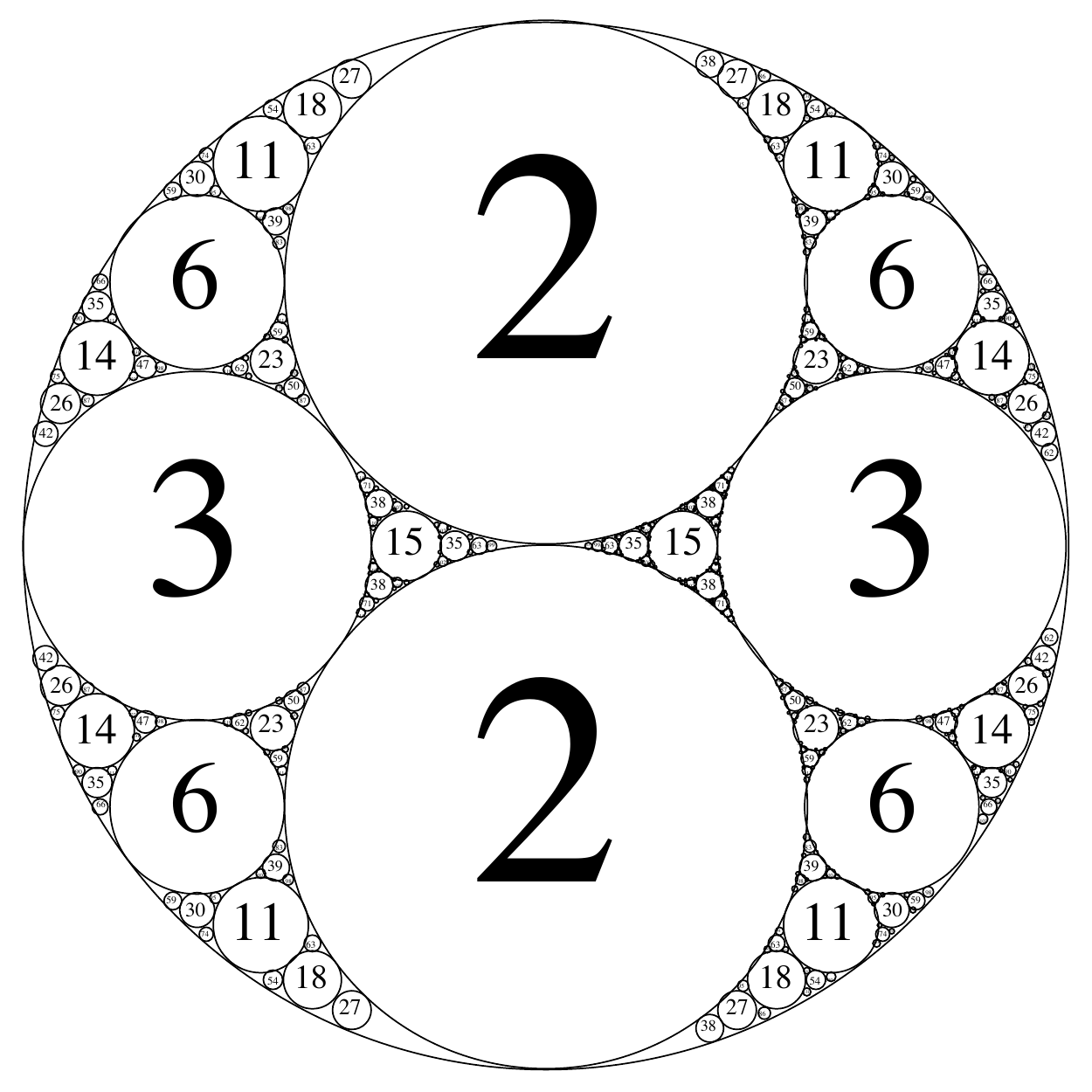}
\caption{A bounded Apollonian circle packing.}\label{f1}\end{figure}

\begin{figure} \includegraphics[width=2in] {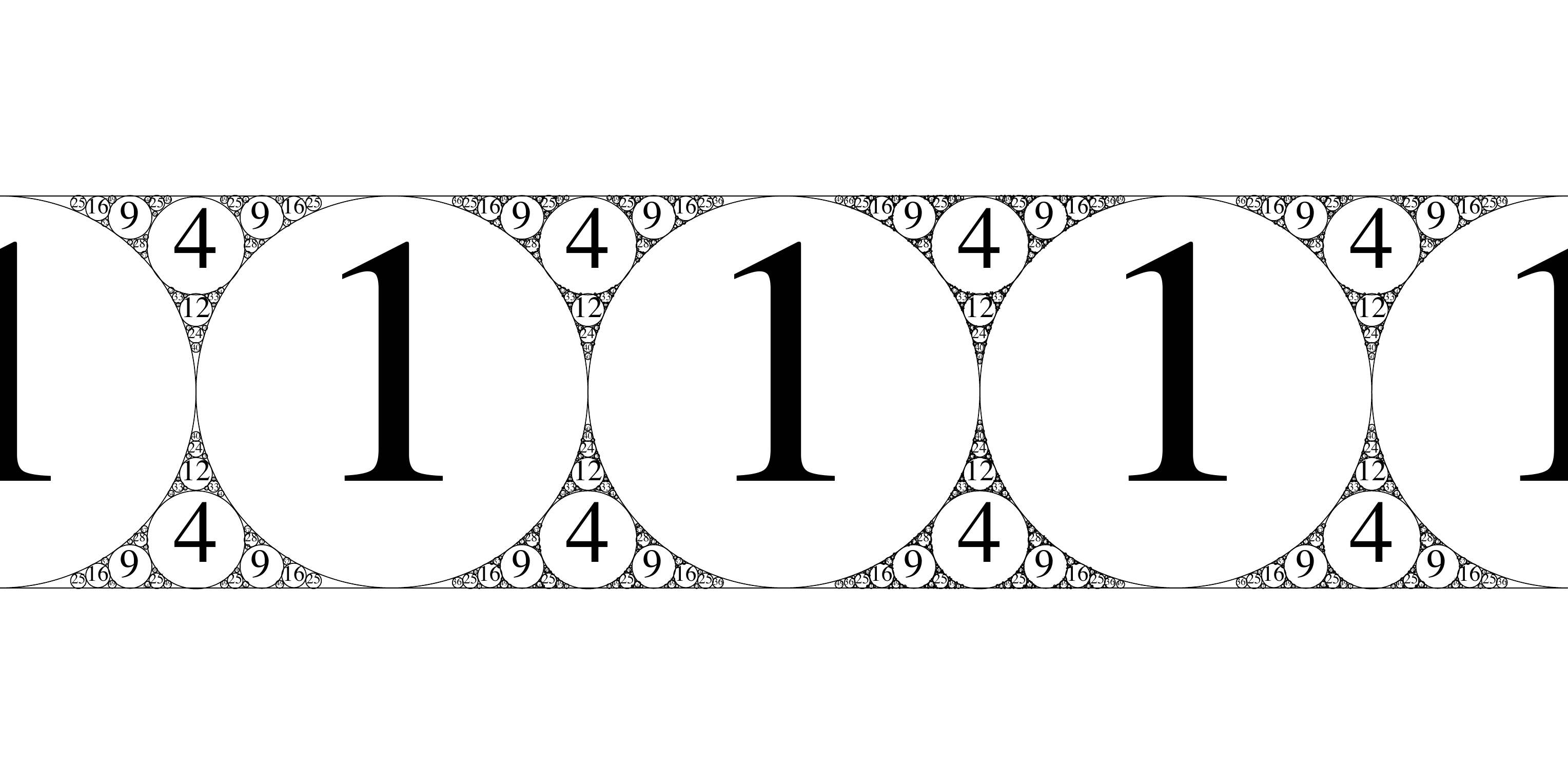}
\caption{An unbounded Apollonian circle packing bounded by two parallel line.}
\label{f2}\end{figure}

Based on the Descartes circle theorem \cite{Co}, the approach in \cite{KO} 
was to relate the circle counting problem with the equidistribution of closed
horospheres in the unit tangent bundle of the hyperbolic manifold $\A_\P\ba \mathbb H^3$.

The new achievement of this paper is an {\it effective} equidistribution of 
closed horospheres (Theorem \ref{main}). Besides its application to counting problems,
such equidistribution result is of independent interest 
in homogeneous dynamics.

\subsection{Effective equidistribution of  closed horospheres}
We obtain an effective equidistribution for closed horospheres in the unit tangent bundle
of hyperbolic $n$-manifolds for $n=2$ or $3$.
Consider the upper half space $\bH^n=\{(x,y): x \in \br^{n-1}, y>0\}$ with the metric given by
$ds^2= \tfrac{\sum_{i=1}^{n-1}dx_i^2 +dy^2}{y^2}$ and let $G=\op{Isom}^+(\bH^n)$
denote the group of orientation preserving isometries of $\bH^n$. 
That is, $G=\PSL_2(\br)$ for $n=2$ and $G=\PSL_2(\c)$ for $n=3$.
 
Let $\G<G$ be a torsion-free
discrete subgroup, which is not virtually abelian. We assume that $\G$ is geometrically finite,
that is, it admits a finite sided fundamental domain in $\bH^n$.
The limit set $\Lambda(\G)$ is the subset of the boundary $\partial(\bH^n)= \br^n\cup \{\infty\}$
consisting of all accumulation points in an orbit $\Gamma (z)$, $z\in \bH^n$.
 We denote by $0<\delta\le n-1$ the critical exponent of $\G$; it is
equal to the Hausdorff dimension of $\Lambda(\G)$
\cite{Sullivan1984}. 
%We will assume that $\delta<2$, or equivalently $\G\ba \bH^3$ is of infinite volume.

\medskip
For $G=\PSL_2(\br)$, set $K:=\op{PSO}(2)$,  and for $G=\PSL_2(\c)$, set $K:=\PSU(2)$.
In both cases, set
$$ A:=\left\{a_y:= \begin{pmatrix} \sqrt y &0 \\  0 & \tfrac{1}{\sqrt y}\end{pmatrix}: y>0\right\},$$ 
and let $M$ be the centralizer of $A$ in $K$.

The hyperbolic manifold $\G\ba \bH^n$ and
its unit tangent bundle $\T^1(\G\ba \bH^n)$  can be 
 identified with the double quotient spaces $\G\ba G/K$ and
 $\G \ba G/M$ respectively. Accordingly, functions on $\G\ba \bH^n$ (resp. $\T^1(\G\ba \bH^n)$)
 can be considered as right $K$-invariant (resp. $M$-invariant) functions on $\G\ba G$.
Since $a_y$ commutes with $M$,
$a_y$ acts on $\G\ba G/M$ by the multiplication from the right and
 this action corresponds to the geodesic flow in the unit tangent bundle. 

Set $N=\{g\in G: a_y^{-1} g a_y\to e\text { as $y\to \infty$}\}$; the contracting horospherical subgroup under the action of $a_y$.
Setting $$n_x:=\begin{pmatrix} 1& x\\ 0 & 1\end{pmatrix}$$
we have $N=\{n_x: x \in \br\}$ for $G=\PSL_2(\br)$, and $N=\{n_x:x\in \c\}$ for $G=\PSL_2(\c)$.
Even though there is no
action of $N$ on $\G\ba G/M$, the $N$-orbits $\{[g]N:=\G\ba \G gMN /M :g\in G\}$ are well-defined since $N$ is normalized
by $M$; these orbits give rise to the 
stable horospherical foliation of  $\T^1(\G\ba \bH^n)$.

%Fixing closed horosphere $[g]N$ in $X$, we prove an effective equidistribution of horospheres
% in $\G\ba G/M$ in the case when $1<\delta<2$. 
%To state our result precisely, we assume that $1 < \delta < 2 $ 
%and 
\medskip

In the rest of the introduction, we assume that $(n-1)/2<\delta<n-1$
 and that $\G\ba \G N$ is closed in $\G\ba G$. 
 In particular, $\G$ has infinite covolume in $G$. 
By the torsion-free assumption on $\G$,
$\G\cap NM=\G\cap N$ and we can identify $\G\ba \G NM/M $ with $(\G\cap N)\ba N$.
Note that the quotient $(\G\cap N)\ba N$ can be naturally identified
with $ (\br/\z)^{k} \times \br^{n-1-k}$ where $0\le k\le n-1$ denotes the rank
 of the free abelian subgroup $\G\cap N$.

\subsection{Equidistribution in spectral terms}
 We describe the effective equidistribution of $\G\ba \G N a_y$ as $y\to 0$
in $\T^1(\Gamma\ba \bH^n)$ in terms of 
the $M$-invariant spectrum of $L^2(\G\ba G)$ for a Casimir element of $G$.

By Lax and Phillips \cite{LP} and Sullivan \cite{Su}, the Laplacian $\Delta$ on $L^2(\G\ba \bH^{n})$
 has only finitely many eigenvalues $$0<\alpha_0=\delta(n-1-\delta)<\alpha_1\le \cdots
\le \alpha_k <\tfrac{(n-1)^2}{4}$$ lying below the continuous spectrum $[\tfrac{(n-1)^2}4,\infty)$. The existence of
a point eigenvalue is the precise reason that our main theorem requires the condition $\delta >(n-1)/2$.
Writing $\alpha_1=s_1(n-1-s_1)$,
any positive number $$0<\bold s_\G<\delta-s_1$$ 
will be referred to as a {\it spectral gap} of $\G$.

Let $\mathcal C$ denote a Casimir element of $\op{Lie}(G)_\c$, which we normalize so that it
acts on $K$-invariant
smooth functions as the negative Laplacian $-\Delta$. 
Then $L^2(\G\ba G)$
contains the unique irreducible infinite dimensional subrepresentation $V$ (a complementary series representation) on which
 $\mathcal C$  acts by the scalar $\delta (\delta-n+1)$. 

Let $\hat K$ denote the unitary dual of $K$, that is,
the equivalence classes of all irreducible unitary representations of $K$.
For $n=2$,  $\hat K$ can be parametrized by $\z$ so that
$\ell\in \hat K$ corresponds to the one-dimensional space $V_\ell$ on which
$k_\theta=\begin{pmatrix} \cos \theta &\sin \theta \\ -\sin \theta&\cos\theta\end{pmatrix}$
 acts by $e^{2\ell i\theta}$. For $n=3$,
 $\hat K$ can be parametrized by $\z_{\ge 0}$ so that
$\ell\in \hat K$ corresponds to the irreducible $2\ell +1$ dimensional representation $V_\ell$.

As a $K$-representation,
 $V$ is decomposed into the orthogonal
sum $\oplus_{\ell\in \hat K}V_{\ell}$  with the subspace $V_{\ell}^M$ of $M$-invariant vectors being one dimensional.
Let $\phi_\ell\in C^\infty(\G\ba G)\cap L^2(\G\ba G)$ be a unit vector in $V_\ell^M$ for each $\ell\in \hat K$.
 We show that there exists $c_n(\ell )\ne 0$ such that for all $y>0$,
$$ \int_{n_x\in (N\cap \Gamma)\ba N} \phi_\ell (n_xa_y)\; dx =c_n({\ell})\cdot  y^{n-1-\delta}.$$ 

%We show that $V_\ell^M$ is spanned by
%a {\it real-valued} function, say, $\phi_\ell\in C^\infty(\G\ba G)$ of unit $L^2$-norm.
%{\footnote{ $C^\infty(\G\ba G)^M$ meansthe set of all $M$-invariant smooth functions on $\G\ba G$.}}

The inner product $\la \psi_1, \psi_2 \ra$ in $L^2(\G\ba G)$ is given by
$$\la \psi_1, \psi_2 \ra=\int_{\G\ba G}\psi_1(g)\overline{\psi_2(g)} \;dg$$ where $dg$ denotes a $G$-invariant measure on $\G\ba G$.

\medskip

The following is our main theorem on the effective equidistribution:
\begin{thm}\label{main} Let $n=2$ or $3$. Let $(n-1)/2<\delta<n-1$.
 For any $\psi\in C_c^\infty\left(\Gamma\bsl G\right)^M$, as $y\to 0$,
\begin{multline*} \int_{(N\cap \Gamma)\ba N} \psi(n_x a_y ) \, dx \\ =
  \sum_{\ell\in \hat K} c_n(\ell) \cdot \la   \psi ,\phi_\ell\ra \cdot
y^{n-1-\delta}   +O(\mathcal S_{2n-1}(\psi)\cdot y^{(n-1-\delta)+\tfrac{2{\bf s}_\G }{2n+1}}) \end{multline*}
where $\mathcal S_{2n-1}(\psi)$ denotes the $L^2$-Sobolev norm of $\psi$ of order $2n-1$.
 Moreover 
$$c_n(\ell) =O((|\ell|+1)^{(n-2)/2})\quad \text{and}\quad \sum_{\ell\in \hat K}|c_n(\ell) \la   \psi ,\phi_\ell\ra| =O( \mathcal S_2(\psi)).$$
\end{thm}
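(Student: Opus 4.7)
The plan is to decompose $\psi$ spectrally in $L^2(\G\ba G)$ and analyze the horospherical integral on each piece. By Lax--Phillips and Sullivan, under the hypothesis $(n-1)/2 < \delta < n-1$, the part of the $L^2$-spectrum of the Casimir below $(n-1)^2/4$ consists of finitely many complementary series $V_{s_0},\ldots,V_{s_k}$ with $s_0=\delta>s_1\ge\cdots\ge s_k>(n-1)/2$, and the remainder is tempered. I would write
$$\psi = \psi_\delta + \sum_{i=1}^{k} \psi_{s_i} + \psi_{\cont},$$
where $\psi_\delta$ is the projection onto the distinguished complementary series $V$ with Casimir eigenvalue $\delta(\delta-n+1)$ and $\psi_{\cont}$ is the projection onto the tempered spectrum.

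The key local computation is that for the unit $M$-invariant vector $\phi_{\ell,s}$ in the $K$-isotype $V_\ell$ of a complementary series $V_s$,
$$\int_{(N\cap\G)\ba N} \phi_{\ell,s}(n_x a_y)\, dx = c_n(\ell,s)\, y^{n-1-s}.$$
The power $y^{n-1-s}$ is forced because $a_y$ acts on the one-dimensional space of $N$-invariant functionals on $V_s$ by the character $y\mapsto y^{n-1-s}$. Realising $V_s$ as an induced representation $\op{Ind}_{MAN}^G\chi_s$ and applying the explicit $K$-type branching rules for $\op{PSL}_2(\br)$ and $\op{PSL}_2(\c)$ then gives both the evaluation of $c_n(\ell,s)$ and the bound $c_n(\ell) = O((|\ell|+1)^{(n-2)/2})$. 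Summing the $K$-type expansion of $\psi_\delta$ against this identity yields the main term $y^{n-1-\delta}\sum_\ell c_n(\ell)\la \psi,\phi_\ell\ra$; the remaining complementary pieces $\psi_{s_i}$ contribute $O(\|\psi\|_2\, y^{n-1-s_1}) = O(y^{(n-1-\delta)+{\bf s}_\G})$ and the tempered part $\psi_{\cont}$ decays at worst like $y^{(n-1)/2}$ (up to logarithms) via Harish-Chandra $\Xi$-function estimates, both comfortably inside the stated error.

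To sharpen the error to $y^{(n-1-\delta)+2{\bf s}_\G/(2n+1)}$ and expose the Sobolev dependence $\mathcal S_{2n-1}(\psi)$, I would truncate the $K$-type expansion at a level $L$: for $|\ell|\le L$ each term is estimated using the spectral asymptotics above together with the bound on $c_n(\ell)$, while for $|\ell|>L$ the tail is bounded by a negative power of $L$ times $\mathcal S_{2n-1}(\psi)$ by invoking the Casimir of $K$ (which acts on $V_\ell$ with eigenvalue of order $|\ell|^2$). Balancing the two contributions by choosing $L$ as a suitable power of $y^{-1}$ produces the exponent $\tfrac{2{\bf s}_\G}{2n+1}$. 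The main obstacle I anticipate is estimating the tempered continuous spectrum $\psi_{\cont}$ with an effective rate that is uniform in $\ell$, and matching this rate against the $K$-Sobolev truncation; this requires combining Sobolev embedding on $\G\ba G$ with matrix coefficient decay for tempered representations, and it is precisely this step that fixes the denominator $2n+1$ in the exponent.
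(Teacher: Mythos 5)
There is a genuine gap at the heart of your plan: you apply the horospherical period $\psi\mapsto\int_{(N\cap\G)\ba N}\psi(n_xa_y)\,dx$ termwise to the abstract $L^2$-spectral decomposition $\psi=\psi_\delta+\sum_i\psi_{s_i}+\psi_{\cont}$, but this period is not a bounded (or even well-defined) functional on $L^2(\G\ba G)$: the projections $\psi_{s_i}$ and $\psi_{\cont}$ are only $L^2$-classes, and restricting them to the measure-zero set $(N\cap\G)\ba Na_y$ and integrating has no meaning without additional pointwise control. In particular, your estimate of the tempered part "at worst $y^{(n-1)/2}$ via Harish-Chandra $\Xi$-function estimates" confuses matrix coefficients $\la a_y\psi_{\cont},\Psi\ra$ (to which such bounds apply) with the horospherical integral of $\psi_{\cont}$ (to which they do not); for an infinite-covolume geometrically finite $\G$ there is no Eisenstein/Whittaker theory available to control that period directly. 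The paper bridges exactly this gap by a uniform thickening: the period is approximated by an honest inner product $\la a_y\psi,\rho_{\eta,\e}\ra$ against a bump function of width $\e$ in the transverse $AMN^-$ directions (Proposition \ref{l:approx_matrix_coeff} and the proof of Theorem \ref{m1}), and only then is the spectral decomposition used, via $|\la a_y\psi^\perp,\rho_{\eta,\e}\ra|\ll y^{n-1-s_1}\mathcal S_{n-1}(\psi)\mathcal S_{n-1}(\rho_{\eta,\e})$ with $\mathcal S_{n-1}(\rho_{\eta,\e})\asymp\e^{-(2n-1)/2}$. The exponent $\tfrac{2\bf{s}_\G}{2n+1}$ comes from balancing $\e\,y^{n-1-\delta}$ against $\e^{-(2n-1)/2}y^{n-1-s_1}$, i.e.\ from the Sobolev cost of the transverse thickening — not, as you propose, from truncating the $K$-type expansion against tempered decay. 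Making the thickening uniform over $\ell$ is itself nontrivial and occupies Sections 3--5 of the paper: one needs the explicit formulas $|\phi_\ell(n_xa_y)|\ll(\ell+1)^{(n-2)/2}\phi_0(n_xa_y)$ and the $K$-Lipschitz bounds of Lemma \ref{Le}, since the $\phi_\ell$ are not compactly supported.

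A second, smaller gap: your assertion that the exponent $y^{n-1-s}$ in $\phi_{\ell}^N(a_y)$ is "forced" by one-dimensionality of $N$-invariant functionals is not correct as stated. The space of relevant functionals is a priori two-dimensional, corresponding to the exponents $y^{n-1-\delta}$ and $y^{\delta}$; indeed the paper's Proposition \ref{co} obtains $\phi_0^N(a_y)=c_n(0)y^{n-1-\delta}+d_n(0)y^\delta$ and must then prove $d_n(0)=0$ by a Cauchy--Schwarz argument exploiting $\phi_0\in L^2$ and $\delta>(n-1)/2$ (together with a separate analysis in the rank-one cusp case). Likewise, the convergence of $\int_{(N\cap\G)\ba N}\phi_\ell(n_xa_y)\,dx$ over the noncompact domain is not automatic from abstract branching rules; the paper proves it from the explicit Patterson--Sullivan integral representations (Theorems \ref{ub}, \ref{cr}, \ref{bdd}). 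So while your identification of the main term and of the bound $c_n(\ell)=O((|\ell|+1)^{(n-2)/2})$ is in the right spirit, the proposal as written is missing the thickening mechanism that makes the spectral input applicable and that actually determines the error exponent.
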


\begin{rmk}\rm
\begin{enumerate}
\item When $\G$ is a lattice in $G$, i.e., when $\delta=n-1$, an effective equidistribution
for expanding closed horospheres is well known, via the mixing of the geodesic flow
 and the thickening argument. This argument goes back to the 1970 thesis of Margulis \cite{Mt}
and was generalized by Eskin and McMullen \cite{EM}. For $n=2$, Sarnak \cite{Sa} obtained a sharper result, based on the study
of Eisenstein series. 
\item In principle, our methods should extend to prove
an analogous result for $G=\op{Isom}^+(\bH^n)$ for any $n\ge 2$;
however computations needed to understand $\phi_\ell$'s seem very intricate 
as the dimension gets higher.

\item 
When $\psi$ is {\it $K$-invariant},  Theorem \ref{main}
 was obtained in \cite{KO}. See also \cite{Kim} for its extensions to other rank one Lie groups.
%We do not know
%an extension of Theorem \ref{main} for the unit tangent bundle of an $n$-dimensional geometrically finite manifold for %$n\ge 4$.
\end{enumerate} \end{rmk}
 
As mentioned before,
our approach in proving Theorem \ref{main} is based on the existence of $L^2$-eigenfunctions on $\G\ba\bH^n$ for $\delta >(n-1)/2$
and hence cannot be applied to $\G$ with $\delta \le (n-1)/2$. 
However a non-effective version of Theorem \ref{main} is available for any $\delta>0$;
this was obtained in \cite{Ro} when 
$(N\cap \G)\ba N$ is compact and
in \cite{OS} in general.
In these papers, the coefficient of the main term was given in terms of the Burger-Roblin measure
associated to the stable horospherical foliation.
In applications to counting problems, it is much handier to have this coefficient in terms of a measure instead of an infinite sum.
For this reason, we present an alternative formulation of Theorem \ref{main} in Theorem \ref{main2}.

\medskip

\subsection{Equidistribution in ergodic terms}
Let $\nu_j$ denote the Patterson-Sullivan measure
on the limit set $\Lambda(\G)$ associated to the basepoint $j=(0_{n-1}, 1)\in \bH^n$, 
which is unique up to a constant multiple.

Sullivan gave an explicit construction of the base eigenfunction $\phi_0\in L^2(\G\ba G)^K$ using $\nu_j$:
\begin{equation}\label{ppp} \phi_0(n_xa_y)=\int_{u\in \br^{n-1}}
\left(\frac{(|u|^2+1)y}{|x-u|^2+y^2}\right)^\delta \; d\nu_j(u).\end{equation}
Here and also later, we identify $\c=\br^2$ for $n=3$, so that $|x-u|^2={(x_1-u_1)^2 +(x_2-u_2)^2}$ for $x=x_1+ix_2$ and $u=(u_1,u_2)$.
We normalize $\nu_j$ so that $\|\phi_0\|_2=1$ \cite{Sullivan1984}.

%Let $m^{\BMS}$ denote the Bowen-Margulis-Sullivan measure on $\G\ba G/M$ associated to $\{\nu_j\}$
%(see section \ref{bms} for the definition). As $\G$ is geometrically finite, $|m^{\BMS}|<\infty$ by Sullivan.
%We normalize the Patterson-Sullivan measure $\nu_j$ so that $\phi_0$ in this expression has $L^2$-norm one:
%$||\phi_0||_2 = 1.$ We keep this normalization throughout the paper and
%all the computations of constants are made with respect to this normalization.

Define the measure $\tilde m^{\BR}_{N}$ on $ G$ in the Iwasawa coordinates $G=KAN$:
for $\psi\in C_c( G)$,
$$\tilde m^{\BR}_N(\psi)=
\int_{KAN}\psi(k a_yn_x)y^{\delta -1} dx dyd\nu_j(k(0)) .$$

%with $o$ being the origin in the boundary $\partial(\bH^2)=\br\cup\{\infty\}$.
This measure is left $\G$-invariant and right $N$-invariant,
and the Burger-Roblin measure $m^{\BR}_{N}$ (associated to the stable horospherical subgroup $N$) 
is the measure on $\G\ba G$ induced from $\tilde m^{\BR}_N$.
 The BR measure $m^{\BR}_N$ %coincides with  a $G$-invariant finite measure if $\delta=1$ but%
is an infinite measure whenever $0<\delta<n-1$ \cite{OS}.

\begin{thm}\label{main2} Let $n=2$ or $3$ and $(n-1)/2 <\delta\le n-1 $.
For any $\psi\in C_c^\infty\left(\Gamma\ba G\right)^M$, as $y\to 0$,
\begin{multline*} \int_{(N\cap \Gamma)\ba N} \psi(n_x a_y ) \, dx  =
\kappa_\G \cdot m_N^{\BR}(\psi)\cdot y^{n-1-\delta} 
 \\ +O(\mathcal S_{2n-1}(\psi)\cdot y^{(n-1-\delta)+\tfrac{2{\bf s_\G}}{2n+1}}) \end{multline*}
where $\kappa_\G= \int_{x\in \br^{n-1}} {(1+|x|^2)^{-\delta}} {dx}\cdot \int_{\textsc{$n_x\in (N\cap \G)\ba N$}} {(1+|x|^2)^{\delta}} {d\nu_j(x)}$.
\end{thm}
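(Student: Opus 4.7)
My plan is to derive Theorem~\ref{main2} from the already-proven Theorem~\ref{main} by matching the two leading coefficients. Theorem~\ref{main} supplies the expansion
$$\int_{(N\cap \G)\ba N} \psi(n_x a_y)\,dx \;=\; C(\psi)\, y^{n-1-\delta} + O\!\left(\mathcal S_{2n-1}(\psi)\, y^{(n-1-\delta)+2{\bf s}_\G/(2n+1)}\right),$$
with $C(\psi) := \sum_{\ell\in\hat K} c_n(\ell)\,\la \psi,\phi_\ell\ra$, whose error already matches the target bound. The borderline case $\delta = n-1$ is a lattice and is handled by the classical effective horospherical equidistribution (Margulis, Sarnak, \cite{EM}), so I may assume $\delta < n-1$. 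It then suffices to prove that $C(\psi) = \kappa_\G\cdot m_N^{\BR}(\psi)$ as linear functionals on $C_c^\infty(\G\ba G)^M$.

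I would do this in two steps. First, Theorem~\ref{main} itself exhibits $C(\psi)$ as the limit $\lim_{y\to 0} y^{\delta-(n-1)}\int_{(N\cap\G)\ba N}\psi(n_x a_y)\,dx$. Second, the non-effective ergodic equidistribution of Roblin \cite{Ro} (cocompact horocycle case) and its extension in \cite{OS} (general case) assert that the same limit equals $c\cdot m_N^{\BR}(\psi)$ for a single constant $c = c(\G)$. Combining these gives $C(\psi) = c\cdot m_N^{\BR}(\psi)$, and only the identification $c = \kappa_\G$ remains.

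To pin down $c$, I would test the identity on the $K$-invariant base eigenfunction $\psi=\phi_0$. Orthonormality yields $C(\phi_0) = c_n(0)$; substituting \eqref{ppp} into $\int_{(N\cap\G)\ba N} \phi_0(n_x a_y)\, dx = c_n(0)\, y^{n-1-\delta}$ and swapping the order of integration, the $x$-integral produces $\int_{\br^{n-1}}(1+|u|^2)^{-\delta}du$, and unfolding the remaining $u$-integral over a fundamental domain of $N\cap\G$ via the conformal transformation rule of $\nu_j$ converts the free $d\nu_j(u)$ into the $N\cap\G$-invariant density $(1+|x|^2)^\delta d\nu_j(x)$ on $(N\cap\G)\ba N$. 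On the other side, a parallel unfolding of $m_N^{\BR}(\phi_0)$ in its KAN-definition reproduces one of these two factors, forcing $c$ to equal the other, namely $\kappa_\G$.

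The main obstacle is precisely this unfolding step in the $c_n(0)$ computation. The Patterson--Sullivan measure is not literally $N\cap\G$-invariant; rather, it satisfies a conformal transformation law whose Radon--Nikodym factor is an exponential of the Busemann function. The computational content is that this factor combines exactly with $(1+|x|^2)^\delta$ to produce an $N\cap\G$-invariant density on $\partial\bH^n\setminus\{\infty\}$, which is what makes the definition of $\kappa_\G$ well-posed and what allows the two sides of the identity to match. Tracking this together with the Sullivan normalization $\|\phi_0\|_2 = 1$ (which fixes the overall scale of $\nu_j$) is the technical heart of the argument.
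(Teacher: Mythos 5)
Your first reduction is the same as the paper's: Theorem \ref{main} (i.e.\ Theorem \ref{m1}) gives the expansion with coefficient $\sum_{\ell}c_n(\ell)\la\psi,\phi_\ell\ra$ and the right error term, and one identifies this coefficient with a multiple of $m^{\BR}_N(\psi)$ by comparison with the non-effective equidistribution of Roblin/Oh--Shah. But note that Theorem \ref{ros} as used in the paper comes with an \emph{explicit} constant, $\mu^{\PS}_N((N\cap\G)\ba N)/|m^{\BMS}|$, and the entire content of Section \ref{bms} is the conversion of $1/|m^{\BMS}|$ into $\int_{\br^{n-1}}(1+|x|^2)^{-\delta}dx$ via Roblin's identity (Theorem \ref{rot}) together with the normalization $\|\phi_0\|_2=1$. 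You discard this explicit constant, call it an unknown $c$, and propose to recover it by testing at $\psi=\phi_0$; that is exactly where your argument has a genuine gap.

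Two problems. First, $\phi_0$ is not compactly supported, so neither Theorem \ref{main} nor Theorem \ref{ros} applies to it as stated. The horospherical side is fine (Proposition \ref{co} gives $\phi_0^N(a_y)=c_n(0)y^{n-1-\delta}$ exactly), but to write $c_n(0)=c\cdot m^{\BR}_N(\phi_0)$ you must extend the ergodic limit to $\phi_0$ and in particular prove $m^{\BR}_N(\phi_0)<\infty$ --- a nontrivial integrability assertion against an infinite measure which you do not address. Second, and more seriously, your final constant-matching is incoherent. Your Fubini-plus-unfolding computation of $\phi_0^N(a_y)$ is correct and yields $c_n(0)=\int_{\br^{n-1}}(1+|x|^2)^{-\delta}dx\cdot\mu^{\PS}_N((N\cap\G)\ba N)=\kappa_\G$, i.e.\ the \emph{product} of the two factors (consistent with the explicit formula in Proposition \ref{co} when $\infty\notin\Lambda(\G)$). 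Hence the desired conclusion $c=\kappa_\G$ forces $m^{\BR}_N(\phi_0)=1$; if instead, as you assert, unfolding $m^{\BR}_N(\phi_0)$ produced one of the two factors, the matching would give $c$ equal to the \emph{other single factor}, not $\kappa_\G$. Moreover there is no routine unfolding available for $m^{\BR}_N(\phi_0)$: both $\phi_0$ and $m^{\BR}_N$ are already $\G$-invariant, so the integral over a fundamental domain does not unfold, and the identity $m^{\BR}_N(\phi_0)=\|\phi_0\|_2^2=1$ is essentially equivalent to Roblin's identity $\|\phi_0\|_2^2=|m^{\BMS}|\int_{\br^{n-1}}(1+|x|^2)^{-\delta}dx$ --- precisely the ingredient the paper imports (Theorem \ref{rot}) and you omit. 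So as written your scheme either silently assumes Theorem \ref{rot} or lands on the wrong constant; the salvage is simply to use the explicit constant in Theorem \ref{ros} plus Theorem \ref{rot}, which is the paper's proof. (Your handling of the borderline case $\delta=n-1$ via the classical lattice result is acceptable, modulo checking the normalization of the constant there.)
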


%In order to extend the general strategy of \cite{KO} to non $K$-invariant functions,
%we need to control the pointwise bounds of $\phi_\ell$'s which
%requires an explicit formula of $\phi_\ell$.
\medskip

\subsection{Effective orbital counting and Affine sieves in sectors} 
Let $Q$ be a quadratic form over $\Q$ of signature $(n,1)$  and $v_0\in \z^{n+1}$ a non-zero vector
such that $Q(v_0)=0$. 
Let $G_0$ denote the identity component of $\SO_Q(\br)$. As well known, $G_0$ is isomorphic to
$G=\PSL_2(\br)$ (for $n=2$) and $G=\PSL_2(\c)$ (for $n=3$) as real Lie groups. Let $\G< G_0(\z)$ be a geometrically finite subgroup
with $\delta>(n-1)/2$ such that $v_0\G$ is discrete.
 For each square-free integer $d$, 
let $\G_d$ be a subgroup of $\G$
 containing $\{\gamma\in \G: \gamma\equiv e \text{ (mod d)}\}$ 
and satisfying $\text{Stab}_{\G}{v_0}=\text{Stab}_{\G_d}{v_0}$.

By a theorem of Bourgain, Gamburd and Sarnak \cite{BGS},
there exists a uniform spectral gap, say ${\s}_0>0$, for all $\G_d$, $d$ square-free. 

\medskip

Consider the representation $G\to G_0$
such that $N$ is contained in $\text{Stab}_G(v_0)$, and
fix a norm $\|\cdot \|$ on $\br^{n+1}$. For any subset $\Omega\subset K$ and $T>0$,
define the sector
$$S_T(\Omega):=\{v\in v_0A\Omega : \|v\|<T\} .$$

Define  $q_{\Omega}$ to be the maximum of
$0\le  q\le 1$ such that \begin{equation}\label{qo}
 \text{ $\nu_j(\e\text{-neighborhood of } \partial(\Omega^{-1}(0))) \ll \e^{q}$ for all small $\e>0$} .\end{equation}
 Note that if $\partial(\Omega^{-1}(0))\cap \Lambda(\G)=\emptyset$, then $q_\Omega=1$. 
We will say $\Omega$ admissible if $q_\Omega>0$.

\begin{thm}\label{ec2m}
Let $\Omega$ be an admissible left $M$-invariant Borel subset of $K$.
 Then for any $\gamma\in \G$, as $T\to \infty$, $$\#\{v\in v_0\Gamma_d\gamma \cap S_T(\Omega)\}=
\frac{ \Xi_{v_0}(\G, \Omega)}{{[\G:\G_d]}}  \cdot  T^\delta + O(T^{\delta-\tfrac{8{\s}_0}{n(n+9)(2n+1)q_\Omega}}).$$
\end{thm}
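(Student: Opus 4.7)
The plan is to reduce Theorem~\ref{ec2m} to the effective horospherical equidistribution Theorem~\ref{main2} by a smoothing-and-unfolding argument, with bookkeeping that tracks the sector $\Omega$ and the level $d$. First I would identify the orbit: because $N\subset\op{Stab}_G(v_0)$ and torsion-freeness forces $\G\cap NM=\G\cap N$, the stabilizer in $\G_d$ of $v_0$ is $\G_d\cap N$, and the hypothesis $\op{Stab}_\G(v_0)=\op{Stab}_{\G_d}(v_0)$ gives the bijection
$$(\G_d\cap N)\ba\G_d\gamma\longrightarrow v_0\G_d\gamma,\qquad(\G_d\cap N)\eta\mapsto v_0\eta.$$
Decomposing $g=nma_yk$ in Iwasawa coordinates, a standard calculation yields $v_0\cdot nma_yk=y^{-1}v_0k$; thus $v_0\eta\in S_T(\Omega)$ if and only if the $K$-component of $\eta$ lies in $\Omega$ (an $M$-invariant condition, consistent with the $M$-invariance of $\Omega$) and the $A$-parameter satisfies $y>\|v_0k\|/T$.

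Next I would smooth. Fix a non-negative $\Psi_\e\in C_c^\infty(\G_d\ba G)^M$ supported in a ball of radius $\e$ about the identity coset with $\int\Psi_\e\,dg=1$, and form
$$F_T(g)=\sum_{\eta\in(\G_d\cap N)\ba\G_d\gamma}\mathbf{1}_{S_T(\Omega)}(v_0\eta g),$$
so that $F_T(e)$ is the desired count. Replacing $F_T(e)$ by $\int F_T\Psi_\e\,dg$ and unfolding,
$$\int_{\G_d\ba G}F_T(g)\Psi_\e(g)\,dg=\int_{(\G_d\cap N)\ba G}\mathbf{1}_{S_T(\Omega)}(v_0g)\,\widetilde\Psi_\e(g)\,dg$$
for a right translate $\widetilde\Psi_\e$ of $\Psi_\e$. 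The lattice points whose $K$-component lies within $\e$ of $\partial\Omega$, or whose $A$-parameter lies in an $\e$-shell of $\|v_0k\|/T$, account for the difference; by the admissibility assumption \eqref{qo} applied to $\nu_j$, together with a Patterson--Sullivan estimate for $v_0\G_d\gamma$ in thin sectors, this boundary error is $\ll\e^{q_\Omega}\cdot T^\delta/[\G:\G_d]$.

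Now I would compute the unfolded integral in $NMAK$ coordinates. Writing Haar measure as $y^{-c_n}\,dn\,dm\,dy\,dk$ for the appropriate $c_n$, and using that $v_0\cdot nma_yk$ is independent of $n$ and $m$, the inner integration over $n\in(\G_d\cap N)\ba N$ at fixed $(m,a_y,k)$ is exactly the horospherical integral appearing in Theorem~\ref{main2}, applied to the $M$-invariant test function $g\mapsto\widetilde\Psi_\e(gmk)$, whose Sobolev norm is $\ll\mathcal S_{2n-1}(\Psi_\e)$. This produces main term $\kappa_{\G_d}\cdot m_N^{\BR}(\widetilde\Psi_\e(\cdot\,mk))\cdot y^{n-1-\delta}$ and error $O(\mathcal S_{2n-1}(\Psi_\e)\,y^{(n-1-\delta)+2\mathbf{s}_0/(2n+1)})$; uniformity in $d$ is provided by the uniform spectral gap $\mathbf{s}_0$ of \cite{BGS}. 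Integrating the main term over $y\in(\|v_0k\|/T,\infty)$ and $k\in\Omega$ against the Jacobian yields $\Xi_{v_0}(\G,\Omega)\cdot T^\delta/[\G:\G_d]$, after rewriting the $K$-factor of $m_N^{\BR}$ via the Patterson--Sullivan measure $\nu_j$ and using $\kappa_{\G_d}=\kappa_\G$; integrating the error term gives $\mathcal S_{2n-1}(\Psi_\e)\cdot T^{\delta-2\mathbf{s}_0/(2n+1)}/[\G:\G_d]$.

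Finally I would optimize $\e$. Since a normalized bump satisfies $\mathcal S_{2n-1}(\Psi_\e)\ll\e^{-(\dim G)/2-(2n-1)}=\e^{-(n^2+9n-4)/4}$, choosing $\e$ a power of $T$ so as to balance $\e^{q_\Omega}T^\delta$ against $\e^{-(n^2+9n-4)/4}T^{\delta-2\mathbf{s}_0/(2n+1)}$, and bounding the resulting ratio using $q_\Omega\le1$ in the denominator, produces the claimed $T^{\delta-8\mathbf{s}_0/(n(n+9)(2n+1)q_\Omega)}$. The main obstacle I anticipate is step~(3): matching the unfolded BR-measure with $\Xi_{v_0}(\G,\Omega)$, which requires carefully identifying the ``shadow'' of $m_N^{\BR}$ on the $K$-direction with $\nu_j$ and integrating against $\mathbf{1}_\Omega(k)\|v_0k\|^{-\delta}$. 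The second delicate point is the boundary estimate: the exponent $q_\Omega$ enters precisely because \eqref{qo} supplies only a $q_\Omega$-H\"older bound on the $\nu_j$-mass of $\e$-neighborhoods of $\partial\Omega^{-1}(0)$, and this is what forces the $q_\Omega$ in the final denominator.
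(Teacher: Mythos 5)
Your overall strategy---smooth, unfold along $(N\cap\G)\ba N$, apply the effective horospherical equidistribution, control the sector boundary via admissibility, and optimize $\e$---is the same as the paper's, which routes Theorem \ref{main2} through the unfolding identity of Lemma \ref{uf} and Theorem \ref{fmc} and then compares smoothed and sharp counts. But two steps as you have written them do not go through. First, you pass from the sharp count $F_T(e)$ to the smoothed average by asserting that the discrepancy is accounted for by ``lattice points whose $K$-component lies within $\e$ of $\partial\Omega$'' and bounded by ``a Patterson--Sullivan estimate for $v_0\G_d\gamma$ in thin sectors.'' No such a priori counting estimate is available; an effective count of orbit points in a thin sector is exactly the type of statement being proven, so this step is circular as stated. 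The paper avoids it by a two-sided sandwich: by the strong wave front lemma of \cite{GOS}, $S_T(\Omega)U_{\ell_0\e}\subset S_{(1+\e)T}(\Omega_{\e+})$ and $S_{(1-\e)T}(\Omega_{\e-})\subset\cap_{u\in U_{\ell_0\e}}S_T(\Omega)u$, so the sharp count is squeezed between two smoothed counts; both are evaluated by Theorem \ref{fmc}, and the admissibility hypothesis \eqref{qo} is used only to compare the two resulting main-term integrals $\int_{\Omega_{\e\pm}^{-1}}\|v_0k^{-1}\|^{-\delta}\,d\nu_j(k(0))$, never to count orbit points near the boundary directly. Without the wave front lemma (or an equivalent input) you also have no justification that an $\e$-perturbation $g\in U_{\ell_0\e}$ distorts the norm ball and the sector only by $O(\e)$ in both the $A$- and $K$-directions; that is the nontrivial geometric ingredient your unfolding with a ``right translate $\widetilde\Psi_\e$'' glosses over.

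Second, your bookkeeping of the dependence on $d$ is inconsistent: you invoke ``$\kappa_{\G_d}=\kappa_\G$'' and yet claim the main term $\Xi_{v_0}(\G,\Omega)\,T^\delta/[\G:\G_d]$. Under the paper's normalization ($\|\phi_0^{\G_d}\|_{L^2(\G_d\ba G)}=1$) one has $\nu_j^{\G_d}=[\G:\G_d]^{-1/2}\,\nu_j^{\G}$, hence $\kappa_{\G_d}=[\G:\G_d]^{-1/2}\,\kappa_\G$, and the factor $[\G:\G_d]^{-1}$ arises precisely because the renormalized Patterson--Sullivan measure enters twice: once through $\kappa_{\G_d}$ and once through the $\nu_j^{\G_d}$-integral over $\Omega^{-1}$ supplied by \eqref{osss} (equivalently through $m^{\BR}_{\G_d,N}$). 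With your identification $\kappa_{\G_d}=\kappa_\G$ the index factor never appears, so the $[\G:\G_d]^{-1}$ in the statement---which is essential for the sieve application---is not actually derived. Your Sobolev bound $\mathcal S_{2n-1}(\Psi_\e)\ll\e^{-(n^2+9n-4)/4}$ and the final balancing of $\e^{q_\Omega}T^\delta$ against $\e^{-(n^2+9n-4)/4}T^{\delta-2\s_0/(2n+1)}$ do match the paper's optimization, and the uniformity in $d$ via the uniform spectral gap is used correctly.
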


 Identifying $\G$ with its pull back in $G$, 
$\Xi_{v_0}(\G, \Omega)$ is given by \begin{equation}\label{xid} \Xi_{v_0}(\G, \Omega)= 
\kappa_\G
\int_{k\in \Omega^{-1}} \frac{ d\nu_j(k(0))} {\|v_0k^{-1} \|^{\delta}}  .\end{equation}
As $\nu_j$ is supported on the limit set $\Lambda(\G)$, $\Xi_{v_0}(\G, \Omega)>0$ if and only if
the interior of $\Omega^{-1}(0)$ intersects $\Lambda(\G)$.

 Given an integer-valued polynomial $F$ on the orbit $v_0\G$,
Theorem \ref{ec2m} has an application in studying integral points $\bf x$ lying in a fixed sector
with $F({\bf x})$ having at most $R$ prime factors (including multiplicities).
For instance, the following theorem can be deduced from Theorem \ref{ec2m} using the same analysis as in \cite[section 8]{KO}.

\begin{thm}\label{twin}   Suppose that $\Omega\subset K$ be an admissible subset such that the interior of
$\Omega^{-1}(0)$ intersects $\Lambda(\G)$.
Then there exists $R\ge 1$ (depending on $\s_0$) such that for each $1\le i\le n+1$,
 \begin{multline*} \#\{{\bf x}\in v_0\G \cap S_T(\Omega): \text{$x_1\cdots x_i$ has at most $R$ prime factors} \}
 \asymp \frac{T^\delta}{(\log T)^i}\end{multline*} 
where ${\bf x}=(x_1, \cdots, x_{n+1})$ and
$f(T)\asymp g(T)$ means that their ratio is between two positive constants uniformly for all $T\gg 1$.
\end{thm}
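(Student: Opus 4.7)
The plan is to reduce Theorem \ref{twin} to the affine sieve machinery of Bourgain--Gamburd--Sarnak \cite{BGS}, with Theorem \ref{ec2m} supplying the arithmetic (level-of-distribution) input; this parallels the treatment in \cite[section 8]{KO}. Write $f({\bf x}) := x_1 \cdots x_i$, and for each square-free $d \geq 1$ set
$$N_d(T) := \#\{{\bf x} \in v_0\G \cap S_T(\Omega) : d \mid f({\bf x})\}.$$
I would first decompose $v_0\G = \bigsqcup_\gamma v_0 \G_d \gamma$ over a set of coset representatives of $\G_d \ba \G$, and pick out those cosets on which $f \equiv 0 \pmod d$ holds identically. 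Applying Theorem \ref{ec2m} to each such coset yields an asymptotic
$$N_d(T) = \beta(d) \cdot \Xi_{v_0}(\G,\Omega) \cdot T^\delta + O\bigl(d^A \cdot T^{\delta - \eta}\bigr),$$
for a multiplicative density $\beta$ and a uniform power-saving exponent $\eta = \eta(\s_0, n, q_\Omega) > 0$, where $A$ controls how the Sobolev norm $\mathcal S_{2n-1}$ of a smooth approximant to the coset indicator in $\G \ba G$ grows in $d$.

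Next I would identify the sieve dimension. Strong approximation for $\G$ (valid after discarding a finite set of ramified primes, which is harmless) implies that ${\bf x} \pmod p$ equidistributes in the appropriate finite quotient, so that for almost every prime $p$ the density $\beta(p)$ of orbit points with $p \mid x_j$ is $1/p + O(p^{-2})$ for each of the $i$ coordinates appearing in $f$. Inclusion-exclusion gives $\beta(p) = i/p + O(p^{-2})$, so the sieve has dimension $i$.

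The final step applies a combinatorial sieve such as Diamond--Halberstam--Richert. Setting $z := T^{\eta/(2A)}$ and the sifted quantity
$$S(T,z) := \#\{{\bf x} \in v_0\G \cap S_T(\Omega) : (f({\bf x}), P(z)) = 1\},$$
with $P(z) := \prod_{p \leq z} p$, the fundamental lemma of the sieve yields $S(T,z) \ll T^\delta / (\log T)^i$, while the weighted lower-bound sieve (at dimension $i$) gives the matching $S(T,z) \gg T^\delta / (\log T)^i$. Any ${\bf x}$ counted by $S(T,z)$ has $\|{\bf x}\| < T$, hence $|f({\bf x})| \leq T^C$ for some fixed $C = C(Q, i)$, and has no prime factor of $f({\bf x})$ below $T^{\eta/(2A)}$; therefore $f({\bf x})$ has at most $R := \lceil 2AC/\eta \rceil$ prime factors. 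Since $\eta$ and $A$ depend only on $\s_0$ and ambient data, $R = R(\s_0)$ as required.

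The main obstacle is obtaining a polynomial-in-$d$ level of distribution from Theorem \ref{ec2m}. Two ingredients are needed: (i) the Sobolev norm $\mathcal S_{2n-1}(\psi_d)$ of a smoothed characteristic function of a $\G_d$-coset in $\G \ba G$ must grow like a fixed power of $[\G : \G_d]$, which requires a quantitative version of strong approximation to control the geometry of the covering $\G_d \ba G \to \G \ba G$; and (ii) the spectral gap $\s_0$ must be uniform in $d$, which is precisely the deep content of \cite{BGS}. Once these are in hand, the sieve-theoretic manipulations are formal, but one must optimize $\eta$ against $A$ to keep $R$ finite. A secondary subtlety is verifying the dimension-$i$ hypothesis of the combinatorial sieve uniformly over the exceptional small primes, but this is standard given strong approximation.
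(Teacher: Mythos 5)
Your overall route is the one the paper intends: the paper gives no independent argument for Theorem \ref{twin}, but reduces it to Theorem \ref{ec2m} via ``the same analysis as in \cite[section 8]{KO}'', i.e.\ feeding the counting on congruence cosets (with the uniform spectral gap ${\s}_0$ of \cite{BGS}) into a combinatorial sieve of dimension $i$. Your decomposition of $N_d(T)$ over cosets of $\G_d\backslash\G$ on which $x_1\cdots x_i\equiv 0 \pmod d$, the computation of the local densities $i/p+O(p^{-2})$ via strong approximation, and the use of the fundamental lemma with $z$ a small power of $T$ are exactly that analysis. One correction of emphasis: your ``main obstacle (i)'' is not actually something you need to supply. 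The error term in Theorem \ref{ec2m} (Theorem \ref{ec}) is already stated with implied constant independent of $d$ and $\gamma$, because its proof smooths with a fixed bump function on $G$ lifted to $\G_d\backslash G$ rather than with an indicator of a $\G_d$-coset inside $\G\backslash G$; the polynomial dependence on $d$ of the sieve remainder $r_d$ therefore comes only from summing over the at most $[\G:\G_d]\ll d^{O(1)}$ relevant cosets, and the uniformity in $d$ is exactly the \cite{BGS} input already packaged into ${\s}_0$.

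The genuine gap is the upper bound in the asserted $\asymp$. Your final step establishes $S(T,z)\asymp T^{\delta}/(\log T)^{i}$ and the inclusion of the sifted set into the set of Theorem \ref{twin}; that inclusion only yields the lower bound. The theorem's set also contains points for which $x_1\cdots x_i$ has prime factors below $z$ (e.g.\ products of the form $2^a$ times a large prime), and nothing in your argument controls their number, so the bound $\ll T^{\delta}/(\log T)^{i}$ does not follow from what you wrote. To get an upper bound along these lines one must, for instance, cover the almost-prime set by the sets $\{{\bf x}: a\mid x_1\cdots x_i,\ (x_1\cdots x_i/a,\,P(z))=1\}$ with $a$ ranging over $z$-smooth integers having at most $R$ prime factors and apply the upper-bound sieve to each; one then has to check that summing the densities over such $a$ does not degrade the power of $\log T$ (naively it costs factors of $\log\log T$), which is exactly the point where the precise bookkeeping of \cite[section 8]{KO} (see also \cite{KO1}) has to be followed rather than the single application of the fundamental lemma you describe. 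As written, your proposal proves the $\gg$ half and leaves the $\ll$ half unaddressed.
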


Theorem \ref{ec2m} is proved in \cite{OS} without an error term.
When the norm is {\it $K$-invariant} and $\Omega=K$, it was also proved in \cite{KO}. 
Theorem \ref{twin} for $\Omega=K$ has been obtained in \cite{KO} (also see \cite{KO1}).
%The same analysis works in deducing Theorem \ref{twin} from Theorem \ref{ec2m}.

\subsection{Organization:} Sections 2-4 are devoted to understanding the base eigenfunctions $\phi_\ell$'s
and their integrals over closed $N$-orbits.
In section \ref{rai}, we find  a {\it computable} recursive formula (Theorem \ref{zl2}) for
a raising operator among $M$-invariant vectors in a general complementary series representation of $G$.
Using this, in section 3, we obtain an explicit description of $\phi_\ell$'s
 which turn out to be related to the Legendre polynomials for $n=3$.
Understanding each $\phi_\ell$ as a function of $\G\ba G$, rather than as a vector in the
Hilbert space $L^2(\G\ba G)$, is crucial in our approach, as we need to deal with
several convergence issues of the integrals of $\phi_\ell$'s as well as to thicken the $N$-integrals
of $\phi_\ell$'s uniformly over all $\ell$'s.
In section 4, we compute the $N$-integrals of $\phi_\ell$'s and compute $c_n(\ell)$'s explicitly (modulo $c_0$).
In section 5, we carry out the thickening of the $N$-integrals of $\phi_\ell$'s uniformly. Since
$\phi_\ell$'s are not supported on compact subsets of $\G\ba G$, this step is delicate,
as we need to ensure that there is at most a polynomial error term in $\ell$ in this procedure.
The equidistribution theorems \ref{main} and \ref{main2} are proved in section 6 and 7 respectively.
In section 8, we deduce Theorem \ref{ec2m} and Theorem \ref{m} from Theorem \ref{main2}.

\medskip
Added in print: Soon after
 we submitted the first version of our paper to the arXive,
we received a preprint by Vinogradov \cite{V},
which also proves Theorem \ref{m} (with a weaker error term)
using different methods. 

\bigskip
\noindent{\bf Acknowledgment:} We thank Peter Sarnak for useful comments
on the preliminary version of this paper.

\section{Ladder operators}\label{rai}
\subsection{Notations and Preliminaries}
Let $G$ be $\PSL_2(\br)$ or $\PSL_2(\c)$. Hence as a real Lie group, $G$ is isomorphic to
the identity component of $\SO(n,1)$ for $n=2$ and $3$ respectively.
In this subsection, we introduce notations which will be used throughout the paper and review some basic facts
about representations of $G$.
Let $K$ be a maximal compact subgroup of $G$. Denoting by $\mathfrak g$ and $\mathfrak k$ the Lie algebras of $G$ and $K$
respectively,
let $\mathfrak g=\mathfrak k \oplus \mathfrak p$ be the corresponding Cartan decomposition of  $\mathfrak g$.
 Let $A=\exp (\mathfrak a)$ where
$\mathfrak a$ is a maximal abelian subspace of $\mathfrak p$
and let $M$ be the centralizer of $A$ in $K$.

Define the symmetric bi-linear form $\la \cdot, \cdot \ra$ on $\mathfrak g$ by
$$\la X, Y\ra :=\frac{1}{2(n-1)} B(X,Y)$$
 where $B(X,Y)=\op{Tr}(\text{ad}X \text{ad} Y)$ denotes the Killing form for $\mathfrak g$.
The reason for this normalization is so that the Riemmanian metric on $G/K$ induced by
 $\la \cdot, \cdot \ra$ has constant curvature $-1$.

 Let $\{X_i\}$ be a basis for $\mathfrak g_{\c}$ over $\c$;
put $g_{ij}=\la X_i, X_j\ra $ and let $g^{ij}$ be
the $(i,j)$ entry of the inverse matrix of $(g_{ij})$.
 The element
$$ \mathcal C =\sum g^{ij}X_iX_j$$
is called the Casimir element of $\mathfrak g_{\c}$ (with respect to $\la \cdot, \cdot \ra$).
 It is well-known that this definition is independent of the choice of a basis and that
$\mC$ lies in the center of the universal enveloping algebra $U(\mathfrak g_\c)$ of $\mathfrak g_{\c}$.

 %We compute the ladder operators between $V_\ell^M$'s. 

For $G=\PSL_2(\br)$,
set $$K=\op{PSO}(2)=
\left\{k_\theta=\begin{pmatrix} \cos \theta & \sin \theta \\ -\sin \theta &\cos\theta\end{pmatrix}: \theta\in [0,\pi)\right\};$$
  and for $G=\PSL_2(\c)$,
we set $K=\PSU(2)$. 
In both cases, we set $N$ to be the strict upper triangular subgroup of $G$ and $A$ the diagonal subgroup
consisting of positive diagonals.
 We have
the Iwasawa decomposition $G=NAK$: any element $g$ of $G$ is written uniquely
as $g=n_xa_yk$ where $n_x=\begin{pmatrix} 1 & x \\ 0& 1\end{pmatrix}\in N$,
 $a_y=\begin{pmatrix} \sqrt y & 0 \\ 0& \sqrt y^{-1} \end{pmatrix}$ and $k\in K$.
 Note that
 $M=\{e\}$ 
and $ M=\left\{\begin{pmatrix} e^{i\theta} & 0\\0& e^{-i\theta}\end{pmatrix}:  \theta\in [0,\pi)\right\}$ respectively.

Set $\bH^n=\{(x, y): x\in \br^{n-1}, y>0\}$ and $j=(0_{n-1},1)$. The group $G$ acts on $\bH^n$
via the extension of the M\"obius transformation action on the boundary $\br^{n-1}\cup\{\infty\}$.
Under this action, we have $K=\op{Stab}_G (j)$ and $\bH^n=G/K \simeq \exp(\mathfrak p)$.
Now the Laplacian operator $\Delta$ on $\bH^n$ is respectively  given by
$$\Delta=-y^2\left(\frac{\partial^2}{\partial x^2} +\frac{\partial^2}{\partial y^2}
\right)  \;\;\text{and}\;\; \Delta=-y^2\left(\frac{\partial^2}{\partial x_1^2}
+ \frac{\partial^2}{\partial x_2^2} +\frac{\partial^2}{\partial y^2} 
\right) + y \frac{\partial}{\partial y}$$
according as $n=2$ or $3$.
By Kuga's lemma, 
 for all $\psi\in C^\infty( G)^K=C^\infty(\bH^n)$, we have
 $$\mathcal C(\psi)=-\Delta(\psi).$$

Consider the following elements of $\mathfrak g$: $$H=\bpm 1 & 0 \\ 0 & -1\ebpm, \hskip 10pt E 
	= \bpm 0 & 1\\ 0 & 0 \ebpm \hskip 5pt \text{ and } \hskip 5pt F = \bpm 0 & 0 \\ 1 & 0 \ebpm.$$
 For $G=\PSL_2(\br)$,
since $\{H,E, F\}$ is a basis for $\mathfrak g_\c$, we
can compute $\mC$ by a direct method and obtain: 
\begin{lem}
For $n=2$, we have
$$\mC=\tfrac{1}{4} H^2 + \tfrac{1}{2}(EF+FE).$$
\end{lem}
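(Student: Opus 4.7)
The plan is a direct computation from the definition: expand $\mathcal C = \sum g^{ij} X_i X_j$ in the basis $\{X_i\} = \{H,E,F\}$ of $\mathfrak g_{\mathbb C} = \mathfrak{sl}_2(\mathbb C)$, and show that the inverse Gram matrix $(g^{ij})$ produces exactly the coefficients $\tfrac14, \tfrac12, \tfrac12$ on $H^2$, $EF$, $FE$.

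First I would compute the adjoint action on the ordered basis $\{H,E,F\}$ using the standard relations $[H,E]=2E$, $[H,F]=-2F$, $[E,F]=H$. This gives $\operatorname{ad}H = \operatorname{diag}(0,2,-2)$, while $\operatorname{ad}E$ and $\operatorname{ad}F$ are off-diagonal nilpotent blocks. Taking traces of pairwise products yields $B(H,H)=8$, $B(E,F)=B(F,E)=4$, and all other entries zero. Since $n=2$ in the lemma, the normalization factor is $\tfrac{1}{2(n-1)}=\tfrac12$, so
\[
(g_{ij}) = \begin{pmatrix} 4 & 0 & 0 \\ 0 & 0 & 2 \\ 0 & 2 & 0 \end{pmatrix}
\qquad\text{in the basis } (H,E,F).
\]

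Inverting this block-diagonal matrix is immediate: $g^{HH}=\tfrac14$ and $g^{EF}=g^{FE}=\tfrac12$, with all other entries zero. Substituting into $\mathcal C = \sum g^{ij} X_i X_j$ gives
\[
\mathcal C \;=\; \tfrac14 H^2 + \tfrac12 EF + \tfrac12 FE \;=\; \tfrac14 H^2 + \tfrac12(EF+FE),
\]
as claimed. The only place to be careful is the sign/coefficient bookkeeping in the $\operatorname{ad}$-matrices and the normalization factor $\tfrac{1}{2(n-1)}$; there is no real obstacle, since the independence of $\mathcal C$ from the choice of basis (recalled in the paragraph preceding the lemma) guarantees the computation in any convenient basis suffices.
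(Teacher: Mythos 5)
Your computation is correct and is exactly the ``direct method'' the paper invokes without writing out: with $[H,E]=2E$, $[H,F]=-2F$, $[E,F]=H$ one indeed gets $B(H,H)=8$, $B(E,F)=B(F,E)=4$ and all other entries zero, so after the normalization $\tfrac{1}{2(n-1)}=\tfrac12$ the inverse Gram matrix gives $\mathcal C=\tfrac14 H^2+\tfrac12(EF+FE)$. Nothing is missing; the basis-independence remark justifies working in $\{H,E,F\}$, just as in the paper.
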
 We set $\mC_K =E-F=\tfrac{\partial}{\partial\theta}$.

To compute the Casimir element for $n=3$, note that the Lie algebra of $K=\PSU(2)$ is 
	$$\mathfrak k=\left\{X\in \op{ M}_2(\C)\;:\; \bar X^t = -X, \;\; {\rm tr}(X)=0\right\}.$$ 
The elements $\tfrac{i}{2}H, X_1:=\tfrac{1}{2}(E-F), X_2:=\tfrac{i}{2}(E+F)$ generate $\mathfrak k$
as a real vector space.
Set 
\begin{equation}\label{eplus} D:=\tfrac{i}{2}H  \in \mathfrak k_\c, \;\; E^+:=X_1-i X_2\in \mathfrak k_\c\;\;
\text{and}\;\; E^-:= -X_1-i X_2\in \mathfrak k_\c .\end{equation}

The elements $\tfrac{1}{2}H$, $Y_1:=\tfrac{1}{2} (E+F)$ and $Y_2:=\tfrac{i}{2}(E-F)$
form a basis of $\mathfrak p$ over $\br$. 
Set
\begin{equation}\label{rl}\tilde H := \tfrac{1}{2}H \in \mathfrak p_\c,\;\; R := Y_1-iY_2\in \mathfrak p_\c
\;\;
\text{and}\;\;  L := -Y_1 -iY_2\in \mathfrak p_\c .\end{equation}

\begin{lem} 
For $n=3$, we have
$$ \mC=\mC_K + \tilde H^2 -\tfrac{1}{2}( RL+LR ) $$
where $\mC_K :=-D^2 +\tfrac{1}{2}(E^+E^-+E^-E^+)$ is the Casimir element of $\mathfrak k_\c$
(up to a constant multiple).
\end{lem}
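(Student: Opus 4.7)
The plan is to compute $\mathcal{C}=\sum g^{ij}X_iX_j$ directly from its definition, organizing the work via the Cartan decomposition. A standard fact about semisimple Lie algebras is that $\mathfrak{k}$ and $\mathfrak{p}$ are orthogonal with respect to the Killing form, hence also with respect to $\langle\cdot,\cdot\rangle$; complexifying, the same holds for $\mathfrak{k}_\c$ and $\mathfrak{p}_\c$ inside $\mathfrak{g}_\c$. Choosing a basis of $\mathfrak{g}_\c$ adapted to $\mathfrak{g}_\c=\mathfrak{k}_\c\oplus\mathfrak{p}_\c$ makes both $(g_{ij})$ and its inverse block-diagonal, so $\mathcal{C}$ splits as $\mathcal{C}=\mathcal{C}_{\mathfrak{p}}+\mathcal{C}_{\mathfrak{k}}$, each summand computed from the restricted form on the respective subspace.

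For the $\mathfrak{p}$-part I would work in the basis $\{\tilde H,R,L\}$ of $\mathfrak{p}_\c$. For $X,Y\in\mathfrak{sl}_2(\c)$ viewed as a real Lie algebra one has $\langle X,Y\rangle = 2\,\mathrm{Re}\,\mathrm{tr}(XY)$ (extended $\c$-bilinearly to $\mathfrak{g}_\c$), so a short trace computation gives $\langle\tilde H,\tilde H\rangle=1$, $\langle R,R\rangle=\langle L,L\rangle=0$, and $\langle R,L\rangle=-2$, with $\tilde H$ orthogonal to both $R$ and $L$. Inverting this $3\times 3$ block and carefully symmetrizing products in the noncommutative algebra $U(\mathfrak{g}_\c)$ yields $\mathcal{C}_{\mathfrak{p}} = \tilde H^2 - \tfrac{1}{2}(RL+LR)$. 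The same procedure applied to $\{D,E^+,E^-\}$ gives $\langle D,D\rangle=-1$, $\langle E^\pm,E^\pm\rangle=0$, $\langle E^+,E^-\rangle=2$, from which $\mathcal{C}_{\mathfrak{k}} = -D^2 + \tfrac{1}{2}(E^+E^-+E^-E^+)$; this is precisely $\mathcal{C}_K$ as defined in the statement. Adding the two pieces produces the claimed formula.

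The main bookkeeping obstacle is juggling two distinct imaginary units: the intrinsic $i$ of $\mathfrak{g}=\mathfrak{sl}_2(\c)$ (which, for example, makes $iH/2$ a bona fide element of the real subalgebra $\mathfrak{k}$), and the $i$ introduced by $\otimes_{\br}\c$ (which appears in $R=Y_1-iY_2\in\mathfrak{p}_\c\setminus\mathfrak{p}$ and in $E^\pm$). These two $i$'s do not interact under the Lie bracket, but one must be careful never to conflate them, for otherwise $R,L,E^\pm$ would collapse onto elements of $\mathfrak{g}$ and the Cartan decomposition of $\mathfrak{g}_\c$ would be destroyed. A secondary care point is the ordering in $U(\mathfrak{g}_\c)$: because $\{g^{ij}\}$ is symmetric but the basis elements do not commute, one must symmetrize products such as $RL$ and $LR$ to recover the Casimir's invariance under the adjoint action.
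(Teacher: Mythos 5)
Your proposal is correct and takes essentially the same route as the paper: both arguments compute the Gram matrix of $\la\cdot,\cdot\ra$ in the basis $\{D,E^+,E^-,\tilde H,R,L\}$ adapted to $\mathfrak g_\c=\mathfrak k_\c\oplus\mathfrak p_\c$, invert it blockwise, and read off $\mC$ directly from its definition. The only cosmetic difference is that you evaluate the form via the trace form $2\op{Re}\op{tr}(XY)$ (which for $n=3$ equals $\tfrac{1}{2(n-1)}B$) rather than through the bracket relations and $\op{ad}$-traces, and your Gram entries ($\la\tilde H,\tilde H\ra=1$, $\la R,L\ra=-2$, $\la D,D\ra=-1$, $\la E^+,E^-\ra=2$, all other pairings zero) do yield exactly the stated formula, with the $\mathfrak k$-block being the Casimir of $\mathfrak k_\c$ up to a scalar since the restricted invariant form on the simple algebra $\su(2)$ is proportional to its Killing form.
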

\begin{proof} Note that $\{D, E^+, E^-, \tilde H, R, L\}$ forms a basis of $\mathfrak g_{\c}$.
 We check  $[D, E^{\pm}]=\pm iE^{\pm}$, $[D,R]-iR$, $[D,L]=-iL$, $[E^+, E^-]=-2iD$,
$[E^+,R]=[E^-,L]=[D, \tilde H]=0$, $[E^-, \tilde H]=-L, [E^+,\tilde H]=-R$,
$[E^-,R]=-2\tilde H$, $[E^+,L]=-2\tilde H$, $[R,\tilde H]=-E^+$,
$[L, \tilde H]=E^-$, and $[R,L]=2iD$.
 
Using these relations, we can compute the matrix $g_{ij}$ used in the definition of $\mC$ and
obtain the above formula for $\mC$.
Since $\{D, E^+, E^-\}$ forms a basis of $\mathfrak k_{\c}$,
we compute that $-D^2 +\tfrac{1}{2}(E^+E^-+E^-E^+)$  is the Casimir element of $\mathfrak k_\c$
(up to a scalar multiple) using the above relations.
\end{proof}

\subsection{Complementary series Representation of $G$} Let $V$ be an infinite dimensional irreducible unitary representation of $G$.
 Denote by $V^K$ and $V^M$ the subspaces of $K$-invariant and $M$-invariant vectors respectively.
We assume that $V^K$ is non-trivial and fix a unit vector in $v_0\in V^K$, which
is unique up to a scalar multiple.

Let $V^\infty$ denote the set of smooth vectors of $V$, i.e., $v\in V^\infty$
if the map $g\mapsto gv$ is a smooth function  $G\to V$.
Every element of $\mathfrak g$ acts as a differential operator on $V^\infty$:
for $X\in \mathfrak g$ and $v\in V^\infty$,
$$\pi(X)(v):=\left.\frac{d}{dt} (\exp (tX). v)\right|_{t=0}$$
where $\exp(X)=\sum_{j=0}^\infty \frac{X^j}{j!}$ denotes the usual exponential map $\frak g\to G$.
This action extends to the action of the universal enveloping algebra $U(\mathfrak g_\c)$ on $V^\infty$.

Denote by $\hat G$ the unitary dual of $G$. A representation $\pi\in \hat G$ is called {\it tempered}
if for any $K$-finite vectors $v, w$ of $\pi$, the matrix coefficient function
$g\mapsto \la \pi(g)v, w\ra$ belongs to $L^{2+\e}(G)$ for any $\e>0$.
It follows from the classification of $\hat G$ (cf. \cite[Thm. 16.2-3]{Knapp}) that
the non-tempered spectrum of $\hat G$ consists of the trivial representation and
the complementary series representation $(\pi_s,\mathcal H_s)$ parametrized by $\tfrac{n-1}{2}<s <(n-1)$, where
$\mC$ acts on $\mathcal H_s^\infty$ by the scalar $s(s-n+1)$.

We fix $V=\mathcal H_s$ for $\tfrac{n-1}{2}<s <(n-1)$.
If $n=2$, the unitary dual $\hat K$ can be parametrized by $\z$ so that
$\ell\in \hat K$ corresponds to the one dimensional representation $V_\ell$ on which $\mathcal C_K$ acts
by the scalar $-4\ell^2$. For $n=3$,
 $\hat K$ can be parametrized by $\z_{\ge 0}$ so that
$\ell\in \hat K$ corresponds to the irreducible $2\ell +1$ dimensional representation $V_\ell$.

\begin{lem}
  For each $\ell\ge 0$, $\mathcal C_K$ acts on $V_\ell$
as the scalar $\ell (\ell +1)$.
\end{lem}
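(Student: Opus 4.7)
The plan is to invoke Schur's lemma: the Casimir $\mC_K$ lies in the center of $U(\mathfrak{k}_\c)$, so on the irreducible representation $V_\ell$ it acts as a scalar, which one may compute by evaluation on any single nonzero vector. The natural choice is a highest weight vector for $D$.

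The first step is a short algebraic simplification: using the relation $[E^+,E^-] = -2iD$ recorded in the previous lemma, I rewrite $\tfrac{1}{2}(E^+E^- + E^-E^+) = E^-E^+ - iD$, so that
$$\mC_K = -D^2 + E^-E^+ - iD.$$
This form is convenient because $E^-E^+$ annihilates any highest weight vector, leaving only scalar contributions from $-D^2$ and $-iD$.

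The second step is to locate a highest weight vector. From $[D, E^\pm] = \pm i E^\pm$, the operators $E^\pm$ shift $D$-eigenvalues by $\pm i$. Since $D \in \mathfrak{k}$ acts skew-Hermitianly on the unitary representation $V_\ell$, its eigenvalues lie in $i\br$, and the $(2\ell+1)$-dimensional string of weights on $V_\ell$ must therefore be $\{-i\ell, -i(\ell-1), \dots, i(\ell-1), i\ell\}$. Choosing a highest weight vector $v_\ell \in V_\ell$ with $Dv_\ell = i\ell\, v_\ell$ and $E^+ v_\ell = 0$, the displayed formula gives
$$\mC_K v_\ell = -(i\ell)^2 v_\ell - i(i\ell) v_\ell = (\ell^2 + \ell) v_\ell = \ell(\ell+1) v_\ell,$$
and by Schur's lemma $\mC_K$ acts as the scalar $\ell(\ell+1)$ on all of $V_\ell$.

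I do not expect any real obstacle: this is the standard highest weight computation for $\mathfrak{su}(2)$. The only care required is the bookkeeping of factors of $i$ forced by the particular normalizations $D = \tfrac{i}{2}H$ and $E^\pm = X_1 \mp iX_2$ of the previous subsection, under which the highest weight for $D$ comes out to $i\ell$ rather than $\ell$, and under which $E^+$ identifies with the standard raising operator $E \in \sl_2(\c)$ (so $[E^+, E^-] = -2iD$ matches $[E,F] = H$ via $H = -2iD$).
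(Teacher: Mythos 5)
Your proposal is correct and matches the paper's proof essentially step for step: both rewrite $\mC_K = -D^2 - iD + E^-E^+$ via $[E^+,E^-]=-2iD$, evaluate on a highest weight vector $w_\ell$ with $Dw_\ell = i\ell\, w_\ell$ and $E^+ w_\ell = 0$, and conclude by irreducibility. The only addition you make is spelling out the weight-string argument for why the top $D$-eigenvalue is $i\ell$, which the paper takes as known.
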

 \begin{proof} If $w_\ell$ is the highest weight vector
of $V_\ell$, then $D(w_\ell)=i\ell w_\ell$. Using $[E^+, E^-]=-2iD$,
we can write $\mC_K=-D^2 -iD +E^-E^+$. Hence $\mC_K(w_\ell)= \ell^2 w_\ell +\ell w_\ell$ since $E^+(w_\ell)=0$.
Since $\mC_K$ acts by the scalar on $V_\ell$ as $V_\ell$ is irreducible, the claim follows.
\end{proof}

As a $K$-representation, we write
 $$V=\oplus_{\ell \in \hat K} m_\ell  V_{\ell}$$ where the multiplicity $m_\ell$ of $V_\ell$
 is at most one for each $\ell$ (see the remark following Theorem 4.5 of \cite{Wa}); in fact we show in the next subsection that $m_\ell=1$ for all
$\ell\in \hat K$.
We also have that the space $V_\ell^M$ is at most
 one dimensional \cite{Di}. 

\subsection{Ladder operators}
In this subsection, we will compute the Ladder operators which maps $V_\ell^M$ to $V_{\ell+1}^M$ and
 use them to obtain an explicit recursive formula for a unit vector of $V_\ell^M$ starting from $v_0$.
These are well-known for $n=2$.
\subsubsection{The case $G=\PSL_2(\br)$}
Set
 $$\mathcal R=\frac{1}{2} (\pi(H)+ i\pi( E +F)) \quad \text{and}\quad \overline{\mathcal R}=\frac{1}{2} (\pi(H)-i\pi(E+F)) .$$
These are called raising and lowering operators respectively and it is well-known that
 $\mathcal R(V_\ell)=V_{\ell+1}$ and $\overline{\mathcal R}(V_\ell)=V_{\ell-1}$ for any $\ell\in \z$ (cf. \cite[Prop. 2.5.2]{Bump}.
%In particular, $m_\ell =1$ for all $\ell \in \z$.
For a fixed unit vector $v_0\in V^K$, put
\begin{equation*} v_\ell=\begin{cases} \mathcal R^\ell(v_0) &\text{ if $\ell\ge 0$ }\\
             \overline{\mathcal R}^{|\ell|}(v_0) & \text{ if $\ell <0$ .}
 \end{cases}\end{equation*}

Then $V=\oplus_{\ell \in \hat K} \c v_\ell$ and
for $\ell \ge 0$, we have (see \cite{BKS}):
\be\label{bks} 
 \|v_{\pm \ell}\|_2= \frac{\sqrt{\Gamma(s+\ell)\Gamma(1-s+\ell)}}{\sqrt{\Gamma (s) \Gamma(1-s)}}.
\ee where $\G(x)$ denotes the Gamma function and $\|v\|$ denotes
 the norm of $v\in V$: $\|v\|=\sqrt{\langle v, v\rangle} $.

\subsubsection{The case $G=\PSL_2(\c)$} Recall the elements $D, E^{+},E^-,\tilde H, R,L$ from \eqref{eplus} and \eqref{rl}.
For each $\ell\ge 0$, the $K$-space $V_\ell$ is the irreducible representation
of $K$ of dimension $2\ell +1$. The operators $E^{\pm}$ move between different $M$-types inside each fixed $V_\ell$
and $R$ (resp. $L$) maps the highest (resp. lowest) weight vector space of each $V_\ell$ into the highest (resp. lowest)
 weight vector space of $V_{\ell+1}$.

We will show that the following differential operator $\mathcal Z_\ell$ maps $V_\ell^M$ to $V_{\ell+1}^M$: for each $\ell \ge 0$,
set  $${\mathcal Z}_\ell := \tfrac{1}{2}\left(RE^- +LE^+ -2(\ell+1) \tilde H\right).$$

\begin{lem}\label{ckc} \label{zh} For each $\ell\in \z_{\ge 0}$,
\begin{enumerate} 
 \item ${\mathcal C}_K {\mathcal Z}_\ell= {\mathcal Z}_\ell {\mathcal C}_K -i(RE^-- LE^+)D -
2\tilde H({\mathcal C}_K +D^2) +2(\ell+1)({\mathcal Z}_\ell+ \ell \tilde H );$
\item $D{\mathcal Z}_\ell  = {\mathcal Z}_\ell D$;
\item ${\mathcal Z}_\ell \tilde H = \tilde H {\mathcal Z}_\ell +\mathcal C-2\mathcal {\mathcal C}_K -D^2-\tilde H^2.$
\end{enumerate}
 \end{lem}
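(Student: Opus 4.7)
The plan is to verify all three identities as purely algebraic statements in the universal enveloping algebra $U(\mathfrak g_\C)$, by direct application of the Leibniz rule $[AB,C]=A[B,C]+[A,C]B$ to the commutation relations listed just above the lemma. A useful preliminary observation, which I would establish first, is that $\mathcal C_K=-D^2+\tfrac12(E^+E^-+E^-E^+)$ commutes with every element of $\mathfrak k_\C$: $[\mathcal C_K,D]=0$ is immediate from $[D,E^\pm]=\pm iE^\pm$, and $[\mathcal C_K,E^\pm]=0$ follows from the same relation together with $[E^+,E^-]=-2iD$. This fact collapses every bracket with $\mathcal C_K$ to brackets involving only the $\mathfrak p_\C$-generators $R,L,\tilde H$.

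Part (2) is an immediate check. Since $[D,\tilde H]=0$, one only needs
\[
[D,RE^-]=[D,R]E^-+R[D,E^-]=iRE^-+R(-iE^-)=0,
\]
and symmetrically $[D,LE^+]=0$; hence $[D,\mathcal Z_\ell]=0$.

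For Part (3), I would apply the Leibniz rule to $[RE^-+LE^+,\tilde H]$ using the four relations $[R,\tilde H]=-E^+$, $[L,\tilde H]=E^-$, $[E^+,\tilde H]=-R$, $[E^-,\tilde H]=-L$. The result is a combination of the monomials $RL,LR,E^+E^-,E^-E^+$ (the $\tilde H$-piece of $\mathcal Z_\ell$ contributes nothing). Substituting the defining formulas $\mathcal C_K=-D^2+\tfrac12(E^+E^-+E^-E^+)$ and $\mathcal C=\mathcal C_K+\tilde H^2-\tfrac12(RL+LR)$, together with $[E^+,E^-]=-2iD$ to reorder the $E^\pm E^\mp$ factors, then allows one to recognize the expression as $\mathcal C-2\mathcal C_K-D^2-\tilde H^2$, giving the identity.

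Part (1) is where I expect the main obstacle: the computation is substantially longer and the bookkeeping more delicate, because $\mathcal C_K$ itself is quadratic and the target involves several cross-terms. By the preliminary observation the bracket reduces to
\[
2[\mathcal C_K,\mathcal Z_\ell]=[\mathcal C_K,R]E^-+[\mathcal C_K,L]E^+-2(\ell+1)[\mathcal C_K,\tilde H],
\]
and each of the three inner brackets must be expanded using the remaining relations $[E^\pm,R],[E^\pm,L]\in\{0,-2\tilde H\}$ and $[R,\tilde H]=-E^+$, $[L,\tilde H]=E^-$, together with $[D,R]=iR$, $[D,L]=-iL$, producing a long sum of cubic monomials in the basis $\{D,E^\pm,\tilde H,R,L\}$. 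The central bookkeeping step is reordering: push all $D$-factors to the right so that the terms linear in $D$ assemble into $-i(RE^- - LE^+)D$ via $[E^+,E^-]=-2iD$; the quadratic $\mathfrak k_\C$-terms arising from $[\mathcal C_K,\tilde H]$ recombine into $-2\tilde H(\mathcal C_K+D^2)$ upon re-inserting the defining formula of $\mathcal C_K$; and the remaining $\tilde H$-proportional terms, together with the explicit coefficient $-2(\ell+1)$, form $2(\ell+1)(\mathcal Z_\ell+\ell\tilde H)$. Matching the monomial types on both sides of the claimed equality then completes the proof.
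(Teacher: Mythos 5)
Your plan --- direct computation in $U(\mathfrak g_{\C})$ via the Leibniz rule, streamlined by the observation that $\mathcal C_K$ commutes with $D$ and $E^\pm$ --- is exactly the paper's own approach, and parts (1) and (2) go through as you outline. A concrete warning for part (3): if you use the listed bracket $[L,\tilde H]=E^-$ verbatim, the Leibniz computation gives
\[
[\mathcal Z_\ell,\tilde H]=\tfrac12\bigl(-RL-E^+E^- -LR+E^-E^+\bigr)=\mathcal C-\mathcal C_K-\tilde H^2+iD,
\]
which differs from the stated right-hand side of (3) by exactly $E^-E^+$. The source is a sign misprint in the paper's list of brackets: the Jacobi identity for $\tilde H,R,L$, using $[\tilde H,R]=E^+$, $[E^+,L]=-2\tilde H$ and $[R,L]=2iD$, forces $[[L,\tilde H],R]=2\tilde H$ and hence $[L,\tilde H]=-E^-$ (paralleling $[R,\tilde H]=-E^+$). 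With that sign corrected, $[LE^+,\tilde H]=-LR-E^-E^+$, and the four quadratic monomials assemble into the fully symmetrized sums $-\tfrac12(E^+E^-+E^-E^+)=-(\mathcal C_K+D^2)$ and $-\tfrac12(RL+LR)=\mathcal C-\mathcal C_K-\tilde H^2$, giving $\mathcal C-2\mathcal C_K-D^2-\tilde H^2$ as claimed. So the methodology is right; just be prepared to re-derive a bracket or two when the bookkeeping fails to close on a first pass.
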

\begin{proof}
Since $\mathcal {\mathcal C}_K=-D^2 +\tfrac{1}{2}(E^+E^-+E^-E^+)$, we compute
	\begin{enumerate}
	\item ${\mathcal C}_K\tilde H = \tilde H {\mathcal C}_K -2 {\mathcal Z}_\ell -2\ell\tilde H$,
	\item ${\mathcal C}_KR = R{\mathcal C}_K -2iDR -2\tilde HE^+$, and
	\item ${\mathcal C}_KL = L{\mathcal C}_K +2iDL-2\tilde HE^-$.
	\end{enumerate}
These relations imply
	\begin{align*}
	&{\mathcal C}_K {\mathcal Z}_\ell = \tfrac{1}{2}\left(RE^-+LE^+ -2(\ell+1)\tilde H\right){\mathcal C}_K\\&
	 -\left\{iDRE^- -iDLE^++\tilde H (E^+E^-+E^-E^+)-2(\ell+1)({\mathcal Z}_\ell +\ell\tilde H)\right\}.
	\end{align*}
Using
 $E^+E^- = {\mathcal C}_K +D^2-iD$,
$E^-E^+={\mathcal C}_K +D^2+iD$, $DRE^-  = RE^-D$ and
$DLE^+ = LE^+D,$
we compute that $ {\mathcal C}_K {\mathcal Z}_\ell$ is equal to
	$$ {\mathcal Z}_\ell {\mathcal C}_K -i(RE^-D- LE^+D) -\tilde H(2{\mathcal C}_K +2D^2) +2(\ell+1){\mathcal Z}_\ell
 +2(\ell+1)\ell \tilde H.
	$$

For (2),  we note that
$[D, R] =iR $, $[D, L]=-iL  $, $[\tilde H, D] = 0$, $[D, E^\pm] = \pm iE^\pm$. Hence
	\begin{align*}
	D{\mathcal Z}_\ell &= \tfrac{1}{2}\left(DRE^- +DLE^+-2(\ell+1)D\tilde H\right)\\
	&=\tfrac{1}{2}\left\{(iR+RD)E^- +(-iL +LD)E^+ -2(\ell+1)\tilde HD\right\}\\
	%&=\tfrac{1}{2}\left\{iRE^- +R(-iE^- +E^-D) -iLE^++ L(iE^++E^+D) -2(\ell+1)\tilde HD\right\}\\
	&=\tfrac{1}{2}\left\{RE^-+LE^+ -2(\ell+1)\tilde H\right\}D = {\mathcal Z}_\ell D.
	\end{align*}
(3) can be proved similarly using $\mathcal C =\mathcal C_K+\tfrac{1}{2}(2\tilde H^2  -RL-LR)$.\end{proof}

\begin{prop}\label{zl1} For each $\ell\ge 0$,
 $${\mathcal Z}_\ell (V_\ell^M)\subset  V_{\ell+1}^M .$$
\end{prop}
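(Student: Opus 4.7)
The plan is to characterize $V_\ell^M$ as a joint eigenspace for $D$ and $\mathcal{C}_K$, and then read off from the commutation relations in Lemma \ref{ckc} that $\mathcal{Z}_\ell$ preserves the conditions defining $V_{\ell+1}^M$.

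First, I would note that since $M = \{\exp(tD): t\in \br\}$ (because $D = \tfrac{i}{2}H$ and $M$ is the diagonal torus of $\PSU(2)$), $M$-invariance on the smooth vectors is equivalent to $Dv = 0$. Combined with the fact (Lemma 2.4) that $\mathcal{C}_K$ acts on $V_\ell$ as the scalar $\ell(\ell+1)$, a vector $v\in V^\infty$ lies in $V_\ell^M$ if and only if
\[
Dv = 0 \quad \text{and} \quad \mathcal{C}_K v = \ell(\ell+1)\, v.
\]

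Now fix $v\in V_\ell^M$ and set $w := \mathcal{Z}_\ell v$. By Lemma \ref{ckc}(2), $Dw = D\mathcal{Z}_\ell v = \mathcal{Z}_\ell D v = 0$, so $w$ is $M$-invariant. For the $K$-type, apply Lemma \ref{ckc}(1) to $v$: the term $-i(RE^- - LE^+)Dv$ vanishes since $Dv=0$, and $(\mathcal{C}_K + D^2)v = \ell(\ell+1)v$, so
\[
\mathcal{C}_K w = \ell(\ell+1)\, w - 2\ell(\ell+1)\, \tilde{H} v + 2(\ell+1)\, w + 2(\ell+1)\ell\, \tilde{H} v
= (\ell+1)(\ell+2)\, w.
\]
Thus $w$ lies in the $(\ell+1)(\ell+2)$-eigenspace of $\mathcal{C}_K$ on $V$.

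To conclude $w\in V_{\ell+1}$, I would invoke the $K$-decomposition $V = \bigoplus_{\ell'\in \hat K} m_{\ell'} V_{\ell'}$: since the eigenvalues $\ell'(\ell'+1)$ of $\mathcal{C}_K$ on the distinct $V_{\ell'}$ are pairwise distinct for $\ell'\ge 0$, the $(\ell+1)(\ell+2)$-eigenspace of $\mathcal{C}_K$ coincides with the (possibly zero) isotypic component for $V_{\ell+1}$. Hence $w \in V_{\ell+1}^M$, as required. The step is essentially mechanical given Lemma \ref{ckc}; the only mild subtlety to watch is keeping track that the $-2\tilde H(\mathcal{C}_K+D^2)v$ term and the $+2(\ell+1)\ell\tilde H v$ term cancel exactly, which is what forces the scalar $(\ell+1)(\ell+2)$ to appear and thereby pins down the target $K$-type.
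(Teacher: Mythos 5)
Your proposal is correct and follows essentially the same route as the paper: both verify $D(\mathcal{Z}_\ell v)=0$ via Lemma \ref{ckc}(2) and compute $\mathcal{C}_K(\mathcal{Z}_\ell v)=(\ell+1)(\ell+2)\mathcal{Z}_\ell v$ from Lemma \ref{ckc}(1) using $Dv=0$ and $\mathcal{C}_K v=\ell(\ell+1)v$, with the two $\tilde H v$ terms cancelling exactly as in the paper. Your closing remark about the distinctness of the eigenvalues $\ell'(\ell'+1)$ pinning down the $K$-type merely makes explicit what the paper leaves implicit in its characterization of $V_\ell$.
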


\begin{proof} 
Note that $v\in V_\ell$ if and only if ${\mathcal C}_Kv = \ell(\ell+1)v$,
and $v\in V^M$ if and only if $Dv=0$. 

Let $v\in V_\ell^M$. 
Using $Dv=0$ and Lemma \ref{ckc} (1), we deduce that 
	\begin{align*}
	{\mathcal C}_K ({\mathcal Z}_\ell v) &=\ell(\ell+1){\mathcal Z}_\ell v
-2\ell(\ell+1)\tilde Hv +2(\ell+1){\mathcal Z}_\ell v +2\ell(\ell+1)\tilde Hv\\
	&=(\ell+1)(\ell+2){\mathcal Z}_\ell v.
	\end{align*}
Hence ${\mathcal Z}_\ell v \in V_{\ell+1}.$
By Lemma \ref{ckc} (2), we have
	$$D{\mathcal Z}_\ell v = {\mathcal Z}_\ell(Dv) = 0,$$
and hence ${\mathcal Z}_\ell v \in V_{\ell+1}^M.$
\end{proof}

Fixing a unit vector $v_0\in V^K$, 
define $v_\ell$, $\ell\ge 1$, recursively:
 \be\label{velldef} v_\ell:={\mathcal Z}_\ell(v_{\ell-1}).\ee
Put \be \label{abl} a_\ell := -2\ell+1\quad\text{and}\quad  b_\ell 
	:=(\ell-1)^2 (\ell(\ell-2)-s(s-2)) .\ee

\begin{thm}[Recursive formula for $v_\ell$]\label{zl2}
For $\ell \geq 1$, 
	$$v_\ell =  a_\ell \tilde H v_{\ell-1}+ b_\ell v_{\ell-2}$$
where $v_{-1}$ is understood as the zero vector.
%	$$\mathcal Cv = \lambda v$$
%	and
%	$${\mathcal C}_K v_\ell = \mu_\ell v_{\ell}\hskip 10pt\text{ for } \mu_\ell = -\ell(\ell+1).$$
\end{thm}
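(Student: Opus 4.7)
The natural approach is to prove the formula by induction on $\ell$. The base case $\ell=1$ is immediate: since $v_0 \in V^K$, we have $\pi(X) v_0 = 0$ for every $X \in \mathfrak{k}_\c$, in particular $E^{\pm} v_0 = 0$, and so $\mathcal{Z}_1 v_0 = \tfrac{1}{2}(RE^- + LE^+ - 4\tilde H) v_0$ collapses to a multiple of $\tilde H v_0$, which is exactly the desired form with $v_{-1}:=0$.

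For the inductive step, a convenient preliminary is the operator identity $\mathcal{Z}_m = \mathcal{Z}_0 - m\tilde H$, read off directly from the definition of $\mathcal{Z}_\ell$. Consequently
\[
 v_\ell = \mathcal{Z}_\ell v_{\ell-1} = \mathcal{Z}_0 v_{\ell-1} - \ell\,\tilde H v_{\ell-1},
 \qquad
 v_{\ell-1} = \mathcal{Z}_{\ell-1} v_{\ell-2} = \mathcal{Z}_0 v_{\ell-2} - (\ell-1)\,\tilde H v_{\ell-2}.
\]
Substituting the second into the first,
\[
 v_\ell = \mathcal{Z}_0^2 v_{\ell-2} \;-\;(\ell-1)\mathcal{Z}_0\tilde H\, v_{\ell-2} \;-\;\ell\,\tilde H \mathcal{Z}_0 v_{\ell-2} \;+\;\ell(\ell-1)\,\tilde H^2 v_{\ell-2}.
\]
The main ingredient is then Lemma~\ref{zh}(3) applied to $v_{\ell-2}\in V_{\ell-2}^M$: since $D v_{\ell-2}=0$, $\mathcal{C}_K v_{\ell-2} = (\ell-2)(\ell-1) v_{\ell-2}$, and $\mathcal{C} v_{\ell-2} = s(s-2) v_{\ell-2}$, the commutator $[\mathcal{Z}_0,\tilde H]$ acts on $v_{\ell-2}$ as an explicit scalar on $v_{\ell-2}$ plus a $\tilde H^2 v_{\ell-2}$ term. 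This lets us swap every occurrence of $\mathcal{Z}_0\tilde H$ for $\tilde H\mathcal{Z}_0$ modulo explicit lower order terms.

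After making this swap, the whole expression reduces to a combination of $\tilde H\mathcal{Z}_0 v_{\ell-2}$, $\mathcal{Z}_0 v_{\ell-2}$, $\tilde H^2 v_{\ell-2}$ and $v_{\ell-2}$. Using $\mathcal{Z}_0 v_{\ell-2} = v_{\ell-1} + (\ell-2)\tilde H v_{\ell-2}$, the first two pieces become expressions in $v_{\ell-1}$, $\tilde H v_{\ell-1}$ and $\tilde H v_{\ell-2}$. Here one uses the Clebsch--Gordan parity selection $\langle 1,0;k,0\,|\,k,0\rangle =0$, which forces $\tilde H v_{\ell-1}\in V_\ell^M\oplus V_{\ell-2}^M$ (no $V_{\ell-1}^M$ component) and $\tilde H v_{\ell-2}\in V_{\ell-1}^M\oplus V_{\ell-3}^M$; these facts, together with the already-derived identities, are precisely what make the $\tilde H v_{\ell-2}$ and $\tilde H^2 v_{\ell-2}$ contributions recombine cleanly into $\tilde H v_{\ell-1}$ and $v_{\ell-2}$.

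The only real obstacle is the bookkeeping. Multiple cross-terms must cancel between the $\mathcal{Z}_0^2$, $\mathcal{Z}_0\tilde H$, $\tilde H\mathcal{Z}_0$ and $\tilde H^2$ pieces, and the surviving coefficients of $\tilde H v_{\ell-1}$ and $v_{\ell-2}$ depend polynomially on $\ell$ and linearly on the Casimir eigenvalue $s(s-2)$. One expects the coefficient of $v_{\ell-2}$ to factor as $(\ell-1)^2$ times a linear combination of $\ell(\ell-2)$ and $s(s-2)$, reflecting the fact that $(\ell-1)$ enters both through the commutator coefficient and through the $\mathcal{C}_K$-eigenvalue on $v_{\ell-2}$; this is exactly the shape of $b_\ell=(\ell-1)^2(\ell(\ell-2)-s(s-2))$. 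Verifying the base case $\ell=2$ directly provides a useful sanity check on the algebra.
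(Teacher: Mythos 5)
Your inductive step has a genuine gap at the term $\mathcal{Z}_0^2 v_{\ell-2}$. After unwinding the definitions you arrive at $v_\ell = \mathcal{Z}_0^2 v_{\ell-2} - (\ell-1)\mathcal{Z}_0\tilde H v_{\ell-2} - \ell\,\tilde H\mathcal{Z}_0 v_{\ell-2} + \ell(\ell-1)\tilde H^2 v_{\ell-2}$, and you assert that after swapping $\mathcal{Z}_0\tilde H$ for $\tilde H\mathcal{Z}_0$ via Lemma \ref{zh}(3) ``the whole expression reduces to a combination of $\tilde H\mathcal{Z}_0 v_{\ell-2}$, $\mathcal{Z}_0 v_{\ell-2}$, $\tilde H^2 v_{\ell-2}$ and $v_{\ell-2}$.'' It does not: the commutation never touches $\mathcal{Z}_0^2 v_{\ell-2}$, and that term is not expressible in the listed vectors by the tools you invoke. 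Re-expanding it only goes in a circle, since $\mathcal{Z}_0^2 v_{\ell-2} = \mathcal{Z}_0\bigl(v_{\ell-1} + c\,\tilde H v_{\ell-2}\bigr)$ brings back $v_\ell$ itself (your substitution was nothing but an unwinding of the definitions). Concretely, after the swap your claimed identity is equivalent to an identity of the form $\mathcal{Z}_0^2 v_{\ell-2} = (\ell-1)(\ell-2)\tilde H^2 v_{\ell-2} + c\, v_{\ell-2}$, which is a statement of the same depth as the theorem and is nowhere established. The Clebsch--Gordan parity fact you cite is true but purely qualitative: it constrains $\tilde H v_{k}$ to lie in $V_{k+1}^M\oplus V_{k-1}^M$, but it cannot produce the values $a_\ell=-2\ell+1$ and $b_\ell=(\ell-1)^2(\ell(\ell-2)-s(s-2))$, and your last paragraph concedes this by only ``expecting'' the coefficient of $v_{\ell-2}$ to have the right shape. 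Since those explicit constants are the whole content of the statement (they are what feeds the norm computation in Lemma \ref{vbdd}), the argument as written does not prove it.

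The missing ingredient is to use the induction hypothesis as an operator identity rather than merely unwinding definitions: assume $v_\ell = a_\ell\tilde H v_{\ell-1}+b_\ell v_{\ell-2}$, apply $\mathcal{Z}_\ell$ to both sides, commute with Lemma \ref{zh}(3), and use $\mathcal{Z}_\ell v_m = v_{m+1}+(m-\ell)\tilde H v_m$; then the $\tilde H^2 v_{\ell-1}$ and $\tilde H v_{\ell-2}$ contributions recombine into $-2\tilde H\bigl(a_\ell\tilde H v_{\ell-1}+b_\ell v_{\ell-2}\bigr)=-2\tilde H v_\ell$ by the hypothesis, no $\mathcal{Z}_0^2$ ever appears, and the constants come out exactly. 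Separately, your indexing is inconsistent: your displayed equations take $v_\ell=\mathcal{Z}_\ell v_{\ell-1}$ (under which $v_1=\mathcal{Z}_1 v_0=-2\tilde H v_0$, already contradicting $a_1=-1$; ``a multiple of $\tilde H v_0$'' is not enough, the coefficient is the point), while your later substitution $\mathcal{Z}_0 v_{\ell-2}=v_{\ell-1}+(\ell-2)\tilde H v_{\ell-2}$ presupposes $v_{\ell-1}=\mathcal{Z}_{\ell-2}v_{\ell-2}$. The convention consistent with Proposition \ref{zl1} and with the stated constants is $v_{m+1}=\mathcal{Z}_m v_m$; mixing the two shifts every coefficient by one.
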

\begin{proof} 

%Note that $\mathcal C$ acts on $V$ by $\lambda=-s(n-1-s)$.
For $\ell\geq 0$ and $m\geq 0$, we have
	\begin{align*}
	{\mathcal Z}_\ell &= \tfrac{1}{2}(RE^-+LE^+-2(\ell+1)\tilde H)\\
	&= \tfrac{1}{2}(RE^-+LE^+-2(m+1)\tilde H+(-2(\ell+1)+2(m+1))\tilde H)\\
	&=\mathcal Z_m +(m-\ell)\tilde H
	\end{align*}
and hence
	\begin{equation} \label{zml} {\mathcal Z}_\ell v_m = v_{m+1}+(m-\ell)\tilde Hv_m.\end{equation}

 We need to show that for $\ell \geq 1$, 
	$$v_\ell = a_\ell \tilde H v_{\ell-1}+ b_\ell v_{\ell-2}.$$

Since $E^+v_0 = E^-v_0=0$, we have
$v_1=  {\mathcal Z}_0v_0 = -\tilde Hv_0 .$	
	For the induction process, assume
	$v_\ell = a_\ell \tilde H v_{\ell-1} +b_\ell v_{\ell-2}.$
	We deduce that, using Lemma \ref{zh} (3) and \eqref{zml},
	\begin{align*}
	v_{\ell+1} &= {\mathcal Z}_\ell v_\ell 
\\ &= a_\ell({\mathcal Z}_\ell \tilde H v_{\ell-1}) + b_\ell ({\mathcal Z}_\ell v_{\ell-2})\\
	& = a_\ell(\tilde H {\mathcal Z}_\ell +\mathcal C-2\mathcal {\mathcal C}_K -D^2-\tilde H^2)v_{\ell-1}
 +b_\ell(v_{\ell-1}-2\tilde Hv_{\ell-2})
.\end{align*}

Observe that $\mathcal C v_{\ell-1}=\lambda v_{\ell-1}$ with $\lambda=-s(2-s)$,
$\mathcal C_K v_{\ell-1}=\ell (\ell-1) v_{\ell-1}$ and $D v_{\ell-1}=0$.

Using the induction hypothesis, 
we deduce
\begin{align*}
	v_{\ell+1} &=a_\ell(\tilde H(v_\ell -\tilde Hv_{\ell-1})+(\lambda -2\ell({\ell-1}))v_{\ell-1} -\tilde H^2v_{\ell-1})\\
	& +b_\ell(v_{\ell-1}-2\tilde Hv_{\ell-2})\\
	&=a_\ell\tilde Hv_\ell -2\tilde H(a_\ell\tilde H v_{\ell-1} +b_\ell v_{\ell-2})+(a_\ell(\lambda-2\ell (\ell-1))+b_\ell)v_{\ell-1}\\
	&=(a_\ell-2)\tilde Hv_\ell +(a_\ell(\lambda-2\ell({\ell-1}))+b_\ell)v_{\ell-1}\\
	&= a_{\ell+1}  \tilde H v_{\ell}+ b_{\ell+1} v_{\ell-1}.\end{align*}
This finishes the proof. \end{proof}

For $x>0$, the Gamma function $\Gamma(x)$ is defined to be the integral $\int_0^\infty e^{-t} t^{-x-1} dt$:
it satisfies $\Gamma(x+1)=x\Gamma(x)$ and
for each positive integer $\ell$, $\Gamma (\ell)=\ell !$.

\begin{lem}\label{vbdd} 
For each $\ell \ge 0$, 
	$$\|v_\ell\| = \frac{\ell !}{\sqrt{2\ell+1}}\frac{\sqrt{\Gamma(s+\ell)\Gamma(2-s+\ell) }}
{\sqrt{\Gamma(s)\Gamma(2-s)}}.$$
In particular, $v_\ell \ne 0$ for each $\ell \ge 0$.
\end{lem}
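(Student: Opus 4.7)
The plan is to induct on $\ell$, combining the three-term recursion of Theorem~\ref{zl2} with standard adjointness in the unitary representation $(\pi_s,\mathcal{H}_s)$. The base case $\ell=0$ is immediate: $v_0$ is a unit vector and the Gamma factors cancel. For $\ell\ge 1$, I expand
\[
\|v_\ell\|^2 = \langle v_\ell,\, a_\ell \tilde H v_{\ell-1}+ b_\ell v_{\ell-2}\rangle.
\]
Because $v_\ell\in V_\ell^M$ and $v_{\ell-2}\in V_{\ell-2}^M$ belong to distinct $K$-isotypic components of $\mathcal H_s$, the cross term vanishes by orthogonality of $K$-types, leaving $\|v_\ell\|^2 = a_\ell\,\langle v_\ell, \tilde H v_{\ell-1}\rangle$.

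Next I move $\tilde H$ to the other slot. Since $H\in\mathfrak p\subset\mathfrak g$ is an element of the real Lie algebra and $\pi_s$ is unitary, $\pi_s(\tilde H)$ is skew-adjoint, so $\langle v_\ell,\tilde H v_{\ell-1}\rangle = -\langle \tilde H v_\ell,v_{\ell-1}\rangle$. Applying Theorem~\ref{zl2} one step up, $v_{\ell+1}=a_{\ell+1}\tilde H v_\ell+b_{\ell+1}v_{\ell-1}$, I solve $\tilde H v_\ell = a_{\ell+1}^{-1}(v_{\ell+1}-b_{\ell+1}v_{\ell-1})$ and pair with $v_{\ell-1}$. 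The $\langle v_{\ell+1},v_{\ell-1}\rangle$ term again vanishes by $K$-type orthogonality, giving the clean one-step recursion
\[
\|v_\ell\|^2 = \frac{a_\ell\, b_{\ell+1}}{a_{\ell+1}}\,\|v_{\ell-1}\|^2.
\]

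It remains to check that this recursion matches the claimed closed form. With $a_\ell=-(2\ell-1)$ and $a_{\ell+1}=-(2\ell+1)$, one gets $a_\ell/a_{\ell+1}=(2\ell-1)/(2\ell+1)$, exactly matching the ratio of the $1/(2\ell+1)$ factors in the formula. Factoring $\ell(\ell-2)-s(s-2)=(\ell-s)(\ell+s-2)$, one finds $b_{\ell+1}=\ell^2(\ell+1-s)(\ell+s-1)$, and these two linear factors are precisely $\Gamma(2-s+\ell)/\Gamma(2-s+\ell-1)$ and $\Gamma(s+\ell)/\Gamma(s+\ell-1)$, while the $\ell^2$ matches $(\ell!/(\ell-1)!)^2$. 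Non-vanishing of $v_\ell$ then follows because, under the hypothesis $\tfrac{n-1}{2}<s<n-1$, every factor in the telescoped product is strictly positive.

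The main obstacle is purely bookkeeping: tracking the sign from skew-adjointness together with the negative values of $a_\ell$, and factoring the quadratic in $b_\ell$ so as to align cleanly with the Gamma shifts. No deeper representation-theoretic input is needed beyond unitarity of $\pi_s$ and orthogonality of distinct $K$-isotypes.
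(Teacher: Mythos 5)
Your proof is correct and follows essentially the same route as the paper: expand $\|v_\ell\|^2$ via the recursion of Theorem~\ref{zl2}, kill the cross term by orthogonality of distinct $K$-types, use skew-adjointness of $\pi_s(\tilde H)$ together with the recursion one step up to get $\|v_\ell\|^2=\tfrac{a_\ell b_{\ell+1}}{a_{\ell+1}}\|v_{\ell-1}\|^2$, and then telescope and match with the Gamma factors. The sign and factorization bookkeeping you carry out agrees with the paper's computation, so there is nothing to add.
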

\begin{proof}
Note that $v_\ell$'s are mutually orthogonal to each other. 
By Lemma \ref{zl2}, we have $v_\ell = a_\ell \tilde Hv_{\ell-1}+b_\ell v_{\ell-2}$. 
Therefore
	\begin{align*}
	\|v_\ell\|^2 &= \left<a_\ell \tilde H v_{\ell-1}+b_{\ell}v_{\ell-2}, v_{\ell}\right> = 
\left<a_{\ell}\tilde H v_{\ell-1}, v_{\ell}\right>\\
	&=-\left<a_\ell v_{\ell-1}, \tilde Hv_{\ell}\right> 
	= -\left<a_\ell v_{\ell-1}, \frac{1}{a_{\ell+1}}v_{\ell+1}-\frac{b_{\ell+1}}{a_{\ell+1}}v_{\ell-1}\right>\\
	&=\frac{a_\ell b_{\ell+1}}{a_{\ell+1}} \|v_{\ell-1}\|^2.
	\end{align*}
It follows that
	\begin{align*}
	\|v_\ell\|^2 &= \frac{a_\ell b_{\ell+1}}{a_{\ell+1}} \cdot \frac{a_{\ell-1}b_{\ell}}{a_{\ell}} 
	\cdot \cdots \cdot \frac{a_{\ell+1-j}b_{\ell+2-j}}{a_{\ell+2-j}}\cdots \frac{a_1b_2}{a_2}\|v_0\|^2\\
	&=\frac{a_1}{a_{\ell+1}} \cdot \prod_{j=1}^{\ell} b_{j+1}\\&
 = \frac{1}{2\ell+1}\prod_{j=1}^\ell j^2(j^2-1-s(s-2) )\\ &
=\frac{(\ell !)^2}{2\ell+1}\prod_{j=1}^\ell (j+s-1)(j-s+1).	\end{align*}
Therefore the claim follows by the well-known properties of the Gamma function.
\end{proof}

Lemma \ref{vbdd} shows  that $\mathcal Z_\ell$ maps a non-zero vector to a non-zero vector.
Therefore Theorem \ref{zl2} together with Lemma \ref{vbdd} implies the following:
\begin{cor}\label{zlf}
 For each $\ell\in \z_{\ge 0}$,
$$\mathcal Z_\ell(V_\ell^M)=V_{\ell+1}^M\quad\text{and}\quad V^M=\oplus_{\ell\in \hat K} \c v_\ell.$$
\end{cor}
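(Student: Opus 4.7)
The plan is to combine the nonvanishing result from Lemma \ref{vbdd}, the inclusion from Proposition \ref{zl1}, and the fact that $V_\ell^M$ is at most one-dimensional (recalled just before Section 2.3).

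First I would verify by induction that $v_\ell \in V_\ell^M$ for every $\ell \ge 0$. The base case holds since $v_0 \in V^K \subset V^M$ and $V^K$ sits inside the $K$-type $V_0$. For the inductive step, $v_\ell = \mathcal Z_{\ell-1}(v_{\ell-1})$ by definition \eqref{velldef}, and Proposition \ref{zl1} gives $\mathcal Z_{\ell-1}(V_{\ell-1}^M) \subset V_\ell^M$, so $v_\ell \in V_\ell^M$. By Lemma \ref{vbdd}, each $v_\ell$ is a \emph{nonzero} element of $V_\ell^M$. Since $\dim V_\ell^M \le 1$, this forces $V_\ell^M = \mathbb{C} v_\ell$ for every $\ell \ge 0$.

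Next, to get the surjectivity $\mathcal Z_\ell(V_\ell^M) = V_{\ell+1}^M$, I note that by the previous paragraph $V_{\ell+1}^M = \mathbb{C} v_{\ell+1}$, and by construction $\mathcal Z_\ell(v_\ell) = v_{\ell+1}$. Hence $\mathcal Z_\ell(V_\ell^M) = \mathbb{C} v_{\ell+1} = V_{\ell+1}^M$, giving the first equality.

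Finally, for the decomposition of $V^M$, I would use the $K$-isotypic decomposition $V = \oplus_{\ell \in \hat K} m_\ell V_\ell$ with $m_\ell \in \{0,1\}$ recalled at the end of Section 2.2, and take $M$-invariants to obtain $V^M = \oplus_{\ell \in \hat K} m_\ell V_\ell^M$. Since $v_\ell$ is a nonzero vector of $V$ transforming in the $K$-type $V_\ell$, the multiplicity $m_\ell$ must be at least $1$, hence exactly $1$; combined with $V_\ell^M = \mathbb{C} v_\ell$ this yields $V^M = \oplus_{\ell \in \hat K} \mathbb{C} v_\ell$. There is no real obstacle here: once Lemma \ref{vbdd} rules out the possibility that the recursion collapses at some step, everything follows from dimension counts using the a priori bound $\dim V_\ell^M \le 1$.
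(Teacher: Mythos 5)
Your proof is correct and follows essentially the same route as the paper: the paper's (very brief) justification is exactly the combination of Lemma \ref{vbdd} (nonvanishing of each $v_\ell$), Proposition \ref{zl1}, and the fact that $V_\ell^M$ is at most one-dimensional, which you have simply written out in detail, including the multiplicity-one observation for the $K$-types. The only point worth flagging is that you read \eqref{velldef} as $v_\ell=\mathcal Z_{\ell-1}(v_{\ell-1})$; this is indeed the indexing the paper actually uses in its arguments (e.g.\ $v_{\ell+1}=\mathcal Z_\ell v_\ell$ in the proof of Theorem \ref{zl2}), so your surjectivity step $\mathcal Z_\ell(V_\ell^M)=\C v_{\ell+1}=V_{\ell+1}^M$ is consistent with the statement of the corollary.
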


%: MIN 

\section{Explicit formulas for base eigenfunctions $\phi_\ell$}\label{s:phl}
Let $G=\PSL_2(\br)$ or $\PSL_2(\c)$, so that $G=\text{Isom}^+(\bH^n)$ for $n=2,3$ respectively.
We keep the notations for $N,A,M$, $\mathcal C$ and $\Delta$, etc. from the  section \ref{rai}. In particular,
$\mathcal C$ satisfies $\mathcal C(\psi)=-\Delta(\psi)$
 for all $\psi\in C^\infty(\G\ba G)^K=C^\infty(\bH^n)$.

Let $\G<G$ be a geometrically finite discrete subgroup  with critical exponent $\tfrac{n-1}2 <\delta< n-1$.
Let $\nu_j=\nu_j(\G)$ denote the Patterson-Sullivan measure on the limit set $\Lambda(\G)$ 
with respect to the basepoint $j=(0_{n-1},1)\in \bH^n$.
Up to a scaling, $\nu_j$ is the weak-limit as $t\to \delta^+$
of the family of measures
$$\nu_{j}(t):=\frac{1}{\sum_{\gamma\in \G} e^{-td(j, \gamma j)}}
\sum_{\gamma\in\G} e^{-td(j, \gamma j)} \delta_{\gamma( j)}$$
where $\delta_{\gamma(j)}$ is the dirac measure at $\gamma(j)$.

%For $n=2,3$, the Laplacian operators $\Delta$ on $\bH^n$ are respectively  given by
%$$\Delta=-y^2\left(\frac{\partial^2}{\partial x^2} +\frac{\partial^2}{\partial y^2}
%\right)  \;\;\text{and}\;\; \Delta=-y^2\left(\frac{\partial^2}{\partial x_1^2}
%+ \frac{\partial^2}{\partial x_2^2} +\frac{\partial^2}{\partial y^2} 
%\right) + y \frac{\partial}{\partial y}.$$

We consider the Hilbert space $L^2(\G\ba G)$ where
the inner product $\la \psi_1, \psi_2 \ra$ is given by
$$\la \psi_1, \psi_2 \ra=\int_{\G\ba G}\psi_1(g)\overline{\psi_2(g)} \;dg$$ where $dg$ denotes 
a $G$-invariant measure on $\G\ba G$.
As $dg$ is an invariant measure,
the action of $G$ on $L^2(\G\ba G)$ by right translations gives rise to a unitary representation.

Since the complementary series representations exhaust the non-tempered spectrum of $\hat G$ (cf. \cite[Thm. 16. 2-3]{Knapp}),
 we deduce the following from \cite{Su} and \cite{LP}: recall the notation $\mathcal H_s$ of the complementary
series representation of $G$ on which $\mC$ acts by $s(s-n+1)$.
 
\begin{thm} \label{lp} There exist 
$$\alpha_0=\delta(\delta-n+1)>\alpha_1\ge \cdots \ge \alpha_k > -(n-1)^2/4$$ such that 
 $$L^2(\G\ba G)=\mathcal H_{s_0=\delta}\oplus \cdots\oplus  \mathcal H_{s_k} \oplus \mathcal W$$
where
$(n-1)/2< s_i<(n-1)$ is given by the equation $\alpha_i=s_i(s_i-n+1)$ and $\mathcal W$ lies
in the tempered spectrum of $\hat G$.
\end{thm}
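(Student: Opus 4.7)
The plan is to combine the Lax--Phillips finiteness theorem for the $L^{2}$-eigenvalues of the Laplacian with Sullivan's identification of the base eigenvalue and Knapp's classification of the non-tempered unitary dual of $G$.

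First, I would invoke \cite{LP}: the Laplacian $\Delta$ acting on $L^2(\G\ba \bH^n) = L^2(\G\ba G)^K$ has only finitely many eigenvalues (counted with multiplicity) $0 < \mu_0 \le \mu_1 \le \cdots \le \mu_k$ lying strictly below the bottom of the continuous spectrum $(n-1)^2/4$, each with finite-dimensional eigenspace. Writing $\mu_i = s_i(n-1-s_i)$ with $s_i \in ((n-1)/2,\, n-1)$ and using Sullivan \cite{Su}, the hypothesis $\delta > (n-1)/2$ forces $\mu_0 = \delta(n-1-\delta)$ and hence $s_0 = \delta$. Setting $\alpha_i := -\mu_i = s_i(s_i-n+1)$ yields the chain $\alpha_0 > \alpha_1 \ge \cdots \ge \alpha_k > -(n-1)^2/4$ appearing in the statement.

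Next, I would pick an orthonormal family $\{\phi_i\}_{i=0}^k$ of $\Delta$-eigenfunctions spanning the sum of these eigenspaces and associate to each $\phi_i$ the closed cyclic $G$-subrepresentation $V_i \subset L^2(\G\ba G)$ that it generates. Since $\phi_i$ is $K$-invariant with $\mathcal C(\phi_i) = -\Delta(\phi_i) = \alpha_i\phi_i$, the centrality of $\mathcal C$ in $U(\mathfrak g_\c)$ forces $\mathcal C$ to act by the scalar $\alpha_i$ on all of $V_i$. Any irreducible component of $V_i$ therefore carries a non-zero $K$-invariant vector on which $\mathcal C$ acts by $\alpha_i \in (-(n-1)^2/4,\, 0)$; Knapp's classification \cite[Thm.~16.2--3]{Knapp} identifies the unique such spherical irreducible unitary representation as $\mathcal H_{s_i}$. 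Because $\dim \mathcal H_{s_i}^K = 1$ and $\phi_i$ generates $V_i$ cyclically, $V_i$ is itself irreducible and isomorphic to $\mathcal H_{s_i}$; mutual orthogonality of the $V_i$'s follows from the orthogonality of the $\phi_i$'s combined with the uniqueness of the spherical vector inside each $\mathcal H_{s_i}$.

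Finally, set $\mathcal W := \bigl(\bigoplus_{i=0}^k V_i\bigr)^{\perp}$. To see $\mathcal W$ lies in the tempered spectrum I would appeal again to \cite[Thm.~16.2--3]{Knapp}: the non-tempered irreducible unitary representations of $G$ consist only of the trivial representation and the spherical complementary series $\mathcal H_t$ for $t \in ((n-1)/2, n-1)$. The trivial representation does not embed in $L^2(\G\ba G)$ because $\G$ has infinite covolume (as $\delta < n-1$), and any embedding of some $\mathcal H_t$ into $\mathcal W$ would produce a non-zero $K$-invariant vector $\psi \in \mathcal W^K \subset L^2(\G\ba\bH^n)$ that is a $\Delta$-eigenfunction with eigenvalue $t(n-1-t) < (n-1)^2/4$; but then $\psi$ would have to lie in the span of the $\phi_i$'s, contradicting $\psi \in \mathcal W$. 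The main step is not conceptual but bookkeeping: one must carefully verify that the cyclic subspaces $V_i$ are orthogonal and irreducible, and both points follow from the uniqueness of the spherical vector in $\mathcal H_{s_i}$ together with the finite-dimensionality of each $\Delta$-eigenspace afforded by Lax--Phillips.
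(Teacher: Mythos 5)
Your proposal is correct and takes essentially the same route as the paper, which proves this theorem simply by citing Lax--Phillips, Sullivan, and Knapp's classification \cite[Thm.~16.2--3]{Knapp}; your write-up fills in exactly that deduction (finitely many low eigenvalues, cyclic subspaces generated by the $K$-fixed eigenfunctions, uniqueness of the spherical representation with prescribed Casimir eigenvalue in $(-(n-1)^2/4,0)$). The only refinement worth keeping in mind is that ``$\mathcal W$ lies in the tempered spectrum'' should be understood in the direct-integral sense, and this follows by the same reasoning because Lax--Phillips leaves no spectrum of $\Delta$ below $(n-1)^2/4$ beyond the finitely many eigenvalues you have already placed in $\mathcal H_{s_0}\oplus\cdots\oplus\mathcal H_{s_k}$.
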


We set $V:=\mathcal H_{\delta}$. The base eigenfunction $\phi_0\in V^K$ for the Laplacian $\Delta$
 can be explicitly written 
as the integral of the Poisson kernel against $\nu_j$ (\cite{Su}):
	$$\phi_0(n_xa_y) = \int_{u\in \br^n} 
	\hat{\phi}_u(x,y) \; d\nu_j(u) $$ where $\hat{\phi}_u(x, y) := \left(\frac{(|u|^2+1)y}{|x-u|^2+y^2}\right)^\delta.$ 
We normalize $\nu_j$ so that $\|\phi_0\|_2=1$.

For $n=2$, we set 
\begin{equation*} \psi_\ell=\begin{cases} \mathcal R^\ell(\phi_0) &\text{ if $\ell\ge 0$ }\\
             \overline{\mathcal R}^{|\ell|}(\phi_0) & \text{ if $\ell <0$}
 \end{cases}\end{equation*}

Sine $\overline{\mathcal R}$ is the complex conjugate of $\mathcal R$,
$\psi_{-\ell}=\overline{\psi_\ell}$. Therefore for $n=2$, it suffices to describe $\psi_\ell$ for $\ell\ge 0$.

For $n=3$, we define $\psi_\ell$ recursively:
$$\psi_0:=\phi_0,\quad \text{ and}\quad \psi_\ell={\mathcal Z}_\ell (\psi_{\ell-1})\quad \text{ for each $\ell \ge 1$.}$$

\begin{dfn}\rm Let $n=2,3$. For each $\ell \in \hat K$,
define the unit vector in $V_\ell^M$ by:
$$\phi_\ell:=\frac{\psi_\ell}{\|\psi_\ell\|_2}\in C^\infty(\G\ba G)^M\cap L^2(\G\ba G).$$
\end{dfn}

We have by Corollary \ref{zlf},
 $$ V^M=\oplus_{\ell\in \hat K} \c \phi_\ell .$$

The rest of this section is devoted to obtaining  pointwise bounds
for these base eigenfunctions $\phi_\ell$'s in terms of $\phi_0$.

\subsection{Base eigenfunctions for  $G=\PSL_2(\br)$.}

\begin{thm}\label{l:rasing-lowering}\label{ul}\label{ub} Let $G=\PSL_2(\br)$.
For $\ell \in \z_{\ge 0}$, 
$$\phi_{\ell}(n_xa_y) = 
\tfrac{\sqrt{\Gamma(1 -\delta)\Gamma(\ell+\delta)}}{\sqrt{\Gamma (\delta) 
\Gamma(\ell+1-\delta)}} \cdot \int_{\br} \hat{\phi}_u(x,y) \left(\tfrac{(x-u) - iy}{(x-u)+ iy}\right)^{\ell}  d\nu_j(u).$$

In particular,
 $$\left|\phi_{\pm \ell}(n_xa_y)\right|  \ll \phi_0(n_xa_y)$$
with implied constant independent of $\ell$.
\end{thm}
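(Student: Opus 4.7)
\emph{Proof plan.} The plan is to prove an explicit integral formula for the unnormalized vector $\psi_\ell := \|\psi_\ell\|_2\,\phi_\ell$ by induction on $\ell \ge 0$, and then read off the pointwise bound from the fact that $F(z,u) := \overline{z-u}/(z-u)$ is unimodular for $z \in \bH^2, u \in \br$. First I would express $\mathcal R = \tfrac12(\pi(H) + i\pi(E+F))$ in Iwasawa coordinates $g = n_x a_y k_\theta$. A direct computation of the one-parameter subgroups shows $\pi(H)f = 2y\partial_y f$, $\pi(E)f = y\partial_x f$ and $\pi(F)f = y\partial_x f - \partial_\theta f$ at $\theta=0$; consequently on any function of pure $K$-weight $2\ell$ (so $\partial_\theta f = 2i\ell f$), evaluated at $\theta = 0$,
$$\mathcal R \;=\; 2iy\,\partial_z + \ell, \qquad \partial_z := \tfrac12(\partial_x - i\partial_y).$$

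Two pointwise identities drive the induction. Writing $\hat\phi_u = (|u|^2+1)^\delta\bigl((z-\bar z)/(2i)\bigr)^\delta\bigl((z-u)(\bar z - u)\bigr)^{-\delta}$ and differentiating in $z$ with $\bar z, u$ held fixed, one obtains
$$2iy\,\partial_z \hat\phi_u \;=\; \delta\,\hat\phi_u\,F, \qquad 2iy\,\partial_z F \;=\; -\frac{z-\bar z}{z-u}\,F.$$
The inductive claim is
$$\psi_\ell(n_xa_y) \;=\; \frac{\Gamma(\delta + \ell)}{\Gamma(\delta)}\int_\br \hat\phi_u(x,y)\,F(z,u)^\ell\,d\nu_j(u),$$
and the base case $\ell = 0$ is the given formula for $\phi_0$. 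For the inductive step, apply $\mathcal R = 2iy\partial_z + \ell$ (on weight $2\ell$) to the integrand. Leibniz together with the two identities above gives
$$2iy\,\partial_z(\hat\phi_u F^\ell) \;=\; \delta\,\hat\phi_u F^{\ell+1} \;-\; \ell\,\hat\phi_u F^\ell\cdot\frac{z-\bar z}{z-u}.$$
The crucial cancellation is that the weight-shift contribution $\ell\,\hat\phi_u F^\ell$ combines with the awkward Leibniz term via
$$1 \;-\; \frac{z-\bar z}{z-u} \;=\; \frac{\bar z - u}{z-u} \;=\; F,$$
producing $\ell\,\hat\phi_u F^{\ell+1}$ exactly. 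Hence $\mathcal R\psi_\ell = (\delta+\ell)\tfrac{\Gamma(\delta+\ell)}{\Gamma(\delta)}\int \hat\phi_u F^{\ell+1}\,d\nu_j = \psi_{\ell+1}$, which closes the induction.

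Dividing by the norm $\|\psi_\ell\|_2 = \sqrt{\Gamma(\delta+\ell)\Gamma(1-\delta+\ell)/(\Gamma(\delta)\Gamma(1-\delta))}$ from \eqref{bks} (with $s = \delta$) produces the announced coefficient $\sqrt{\Gamma(1-\delta)\Gamma(\ell+\delta)/(\Gamma(\delta)\Gamma(\ell+1-\delta))}$ in front of $\int \hat\phi_u F^\ell\,d\nu_j$. The pointwise bound is then immediate from $|F| \equiv 1$: one has $\bigl|\int \hat\phi_u F^\ell\,d\nu_j\bigr| \le \int \hat\phi_u\,d\nu_j = \phi_0(n_xa_y)$, and Stirling controls the scalar prefactor (it is $O(\ell^{(2\delta-1)/2})$). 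The main obstacle is spotting the cancellation in the inductive step: the weight-shift contribution $\ell\,\psi_\ell$ from $\mathcal R = 2iy\partial_z + \ell$ must combine precisely with the spurious Leibniz term $-\ell\,\hat\phi_u F^\ell(z-\bar z)/(z-u)$ through the algebraic identity $1 - (z-\bar z)/(z-u) = F$ to yield a clean $F^{\ell+1}$ factor; without this identity the recursion would not close in the explicit form claimed.
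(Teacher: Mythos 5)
Your argument is correct and is essentially the paper's own proof: the same induction on $\ell$ with the raising operator written in Iwasawa coordinates, the same key cancellation $1+\tfrac{-2iy}{(x-u)+iy}=\tfrac{(x-u)-iy}{(x-u)+iy}$ closing the recursion, and the same normalization via the norm formula \eqref{bks}; your reduction to $2iy\partial_z+\ell$ on weight-$2\ell$ functions at $\theta=0$ is just a repackaging of the operator $e^{2i\theta}\left(iy\tfrac{\partial}{\partial x}+y\tfrac{\partial}{\partial y}+\tfrac{1}{2i}\tfrac{\partial}{\partial\theta}\right)$ used in the paper. One caveat: as your own Stirling remark shows, the prefactor $\sqrt{\Gamma(1-\delta)\Gamma(\ell+\delta)}/\sqrt{\Gamma(\delta)\Gamma(\ell+1-\delta)}$ grows like $\ell^{\delta-1/2}$ (recall $\delta>1/2$ here), so unimodularity of $\left(\tfrac{(x-u)-iy}{(x-u)+iy}\right)^{\ell}$ alone gives $|\phi_{\pm\ell}(n_xa_y)|\ll(\ell+1)^{\delta-1/2}\,\phi_0(n_xa_y)$ rather than a bound with implied constant independent of $\ell$; this shortfall is shared with the paper's own derivation of the ``in particular'' clause (and the polynomial loss in $\ell$ is harmless for the later sections, which sum against coefficients decaying like $(|\ell|+1)^{-(2n-2)}$), but you should not describe the uniform-in-$\ell$ bound as immediate from the displayed formula.
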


\begin{proof}
We have
$$K=\{k_\theta=\begin{pmatrix} \cos\theta &\sin \theta\\ -\sin\theta &\cos\theta \end{pmatrix}: \theta\in [0,\pi)\}.$$
The raising operator $\mathcal R$ in the Iwasawa coordinates $n_xa_yk_\theta$
can be written as $$\mathcal R
 = e^{2i\theta}\left(iy\tfrac{\partial}{\partial x} +y\tfrac{\partial}{\partial y} +\tfrac{1}{2i}\tfrac{\partial }
{\partial \theta}\right)$$
(see \cite{Bump}).
Since $\|\psi_\ell\|$ is given in \eqref{bks}, it suffices to show that
\begin{equation}\label{psi1}\psi_{\ell}(n_xa_yk_\theta)
 =\tfrac{ e^{ 2\ell i\theta}\Gamma(\delta+\ell)}{\Gamma(\delta)}\int_{\br}
 \hat{\phi}_u(x,y) \left(\tfrac{(x-u)- iy}{(x-u)+ iy}\right)^{\ell} \; d\nu_j(u).\end{equation}

	The case $\ell=0$ is clear.
To use an induction, we assume that  \eqref{psi1} holds for $\ell$.
We compute
	\begin{multline*}
	\left(iy\frac{\partial}{\partial x} + y\frac{\partial}{\partial y}\right)\left(\hat{\phi}_u(x, y)\cdot \left(\tfrac{(x-u)-iy}{(x-u)+iy}\right)^\ell\right)\\ =  
\delta\cdot \hat{\phi}_u(x, y)\cdot \left(\tfrac{(x-u)-iy}{(x-u)+iy}\right)^{\ell+1}  + \ell\cdot \hat{\phi}_u(x, y)\cdot \left(\tfrac{(x-u)-iy}{(x-u)+iy}\right)^{\ell}\cdot\left(\tfrac{-2iy}{(x-u)+iy}\right)
	\end{multline*}

and 
	\begin{multline*}
	\frac{1}{2i}\frac{\partial}{\partial\theta}\left(\mathcal R^\ell\phi_0\right)(n_xa_yk_\theta) \\
	= \tfrac{\Gamma(\delta+\ell)}{\Gamma(\delta)} \cdot \ell\cdot e^{2i\ell\theta}\int_\R
\hat{\phi}_u(x,y)\left(\tfrac{(x-u)-iy}{(x-u)+iy}\right)^\ell\; d\nu_j(u).
	\end{multline*}
Hence 
	\begin{align*}
	&\left(\mathcal R^{\ell+1}\phi_0\right)(n_xa_yk_\theta) 
	= \tfrac{e^{2i(\ell+1)\theta} \Gamma(\delta+\ell+1)}{\Gamma(\delta)}\int_{\br}
\hat{\phi}_u(x, y)  \left(\tfrac{(x-u)-iy}{(x-u)+iy}\right)^{\ell +1} \; d\nu_j(u)\;.
	\end{align*} Hence \eqref{psi1} holds for $\ell+1$, finishing the proof.

\end{proof}

\subsection{Base eigenfunctions for  $G=\PSL_2(\c)$.}
The case of $n=3$ involves more complicated computations and 
it turns out that $\phi_\ell$'s are not uniformly bounded by $\phi_0$, but grow
polynomially as $\ell \to \infty$ (see Theorem \ref{bdd1}).

We parametrize elements of $K=\PSU(2)$ as 
$$K = \left\{k_{\mu_1, \mu_2, \theta}:
{0\leq \theta < \frac{\pi}{2}, \; 0\leq \mu_1 < \pi, \; 0\leq \mu_2 <2\pi, \atop \mu_1=0\text{ if } \theta=\frac{\pi}{2}\; 
	\text{ and }\;  \mu_2=0\text{ if } \theta=0}\right\}$$ 
where $$ k_{\mu_1, \mu_2, \theta}:=
	\bpm  e^{i\mu_1} \cos\theta &  e^{i\mu_2} \sin\theta\\  -e^{-i\mu_2} \sin\theta
&  e^{-i\mu_1}\cos\theta\ebpm .	$$

Using the coordinates $(x_1,x_2, y, \mu_1,\mu_2, \theta)$, 
the element $\tilde H$ defined in \eqref{rl} is given by
\begin{multline}\label{ht}
	\tilde H= -\cos(\mu_1+\mu_2)\sin(2\theta) \; y\tfrac{\partial}{\partial x_1} -\sin(\mu_1+\mu_2)\sin(2\theta) 
\; y\tfrac{\partial}{\partial x_2}\\ + \cos(2\theta)\; y\tfrac{\partial}{\partial y} +\tfrac{\sin(2\theta)}{2}\;
\tfrac{\partial}{\partial \theta}.
	\end{multline}

Recall 
$$\hat{\phi}_u(x, y) = \left(\frac{(|u|^2+1)y}{|x-u|^2+y^2}\right)^\delta.$$

Setting $\hat\phi_u(n_xa_yk)=\hat\phi_u(x,y,k):=\hat\phi_u(x,y)$, we may regard $\hat \phi_u$ as a function on $G$ and
 define
functions $\hat{\phi}_u^{(\ell)}$ on $G$ by the recursive formula:
 for each $\ell \geq 1$, 
	$$\hat{\phi}_u^{(\ell)} :=  (-2\ell+1 )  \tilde H(\hat{\phi}_u^{(\ell-1)})
 +(\ell-1)^2(\delta(2-\delta)+\ell(\ell-2))\hat{\phi}_u^{(\ell-2)}$$
	(as before, the terms which are not defined are understood as $0$). 
\begin{lem}\label{eqqq} For each $\ell\ge 0$, and $n_xa_y\in NA$,
we have $$\psi_\ell(n_xa_y) =\int_{u\in \br^2}\hat{\phi}_u^{(\ell)}(x,y, e) d\nu_j(u).$$
\end{lem}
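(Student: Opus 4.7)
The plan is a direct induction on $\ell$, leveraging the recursion for $\psi_\ell$ supplied by Theorem \ref{zl2} (applied with $s=\delta$). The base case $\ell=0$ is precisely the explicit formula for $\phi_0$ recalled at the beginning of Section \ref{s:phl}: $\psi_0(n_xa_y)=\phi_0(n_xa_y)=\int_{\br^2}\hat{\phi}_u(x,y)\,d\nu_j(u)=\int_{\br^2}\hat{\phi}_u^{(0)}(x,y,e)\,d\nu_j(u)$.

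For the inductive step, I first observe that since $\psi_0=\phi_0$ is a unit $K$-invariant vector in the complementary series subrepresentation $V=\mathcal{H}_\delta$ of $L^2(\G\bsl G)$ and $\psi_\ell=\mathcal{Z}_\ell(\psi_{\ell-1})$ by definition, the abstract recursion $v_\ell=a_\ell\tilde Hv_{\ell-1}+b_\ell v_{\ell-2}$ of Theorem \ref{zl2} descends to the functional identity $\psi_\ell=a_\ell\tilde H\psi_{\ell-1}+b_\ell\psi_{\ell-2}$ on $G$, with $a_\ell=-2\ell+1$ and $b_\ell=(\ell-1)^2(\ell(\ell-2)-\delta(\delta-2))$. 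Since $-\delta(\delta-2)=\delta(2-\delta)$, these coefficients match exactly those appearing in the recursive definition of $\hat{\phi}_u^{(\ell)}$. Evaluating at $g=n_xa_y$ and invoking the induction hypothesis at $\ell-1$ and $\ell-2$, the claim reduces to the interchange
$$(\tilde H\psi_{\ell-1})(n_xa_y)=\int_{\br^2}(\tilde H\hat{\phi}_u^{(\ell-1)})(x,y,e)\,d\nu_j(u),$$
after which linearity closes the induction.

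The one technical point is justifying this interchange of $\tilde H$ with the integral. It is easily handled by dominated convergence: $\nu_j$ is a finite Borel measure supported on the compact limit set $\Lambda(\G)\subset\br^2$, and for fixed $(x,y)$ with $y>0$, the map $u\mapsto\hat{\phi}_u^{(\ell-1)}(x,y,k)$ is smooth in $(u,k)$ on a neighborhood of $\Lambda(\G)\times\{e\}$, so its $G$-derivatives at $k=e$ are continuous and hence uniformly bounded in $u$ on the support of $\nu_j$. Alternatively, one may read off from \eqref{ht} that at $k=e$ the operator $\tilde H$ collapses to $y\,\partial_y$ (the coefficients of $\partial_{x_1},\partial_{x_2},\partial_\theta$ all vanish when $\mu_1=\mu_2=\theta=0$), reducing the question to differentiation in the single variable $y$ under the integral, which is routine. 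The main obstacle is therefore no more than careful bookkeeping: aligning the constants from Theorem \ref{zl2} with those defining $\hat{\phi}_u^{(\ell)}$ and verifying that the $\tilde H$-recursion lifts cleanly through the Poisson-type integral representation of $\phi_0$.
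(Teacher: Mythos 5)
Your proof is correct and follows essentially the same route as the paper's: induction on $\ell$ using the recursion $\psi_\ell = a_\ell \tilde H\psi_{\ell-1} + b_\ell\psi_{\ell-2}$ from Theorem \ref{zl2} (applied with $s=\delta$), combined with the key observation (visible directly from \eqref{ht}) that at $k=e$ the operator $\tilde H$ reduces to $y\,\partial_y$, which allows differentiation under the integral sign against the finite measure $\nu_j$ supported on the bounded set $\Lambda(\G)\setminus\{\infty\}$. Your extra care in justifying the interchange is a sensible addition that the paper leaves implicit.
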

\begin{proof}
By the recursive formula for $\mathcal Z_\ell$ given in Theorem \ref{zl2},
 for $\ell\geq 1$, we have
$$\psi_{\ell+1} = \mathcal Z_\ell(\psi_{\ell-1})=a_\ell \tilde H (\psi_\ell)+ b_\ell\psi_{\ell-1}$$
where 
$ a_\ell = -2\ell+1 $ and $b_\ell = 
(\ell-1)^2(\delta(2-\delta)+\ell(\ell-2))$.

 We prove the claim by induction. The case of $\ell=0$ is by definition of $\psi_0$. Suppose the claim holds
for all $j\le \ell -1$.
Observe that all the terms in \eqref{ht} for $\tilde H$ except for the term $y \frac{\partial}{\partial y}$ have 
$\sin(2\theta)$ and hence vanish when $\theta=0$. Therefore
\begin{align*} \psi_\ell(n_xa_y) 
&= a_\ell y \tfrac{\partial \psi_{\ell-1}}{\partial y} (n_xa_y)+ b_\ell \psi_{\ell-2}(n_xa_y)\\ &=
 a_\ell y \tfrac{\partial }{\partial y}\int_{u\in \br^2}\hat \phi_u^{(\ell-1)}(x,y) d\nu_j(u) + b_\ell \int_{u\in \br^2}\hat \phi_u^{(\ell-2)}(x,y) d\nu_j(u)
\\ &=
\int_{u\in \br^2}\left(  a_\ell  (\tilde H \hat \phi_u^{(\ell-1)})(x,y,e ) + b_\ell  \hat \phi_u^{(\ell-2)} (x,y,e )\right) d\nu_j(u) 
\\ &=\int_{u\in \br^2}\hat{\phi}_u^{(\ell)}(x,y, e) d\nu_j(u).
\end{align*}
\end{proof}

It turns out that the Legendre polynomials
appear in the formula for $\phi_\ell$:
let us denote by ${\bf P}_\ell(t)$
 the Legendre polynomial of degree $\ell$. It is
 defined by the recursive relation: ${\bf P}_0(t)=1$, ${\bf P}_1(t)=t$ and
\begin{equation}\label{leg} 
\ell \cdot {\mathbf P}_\ell(t)=(2\ell-1)t {{\bf P}}_{\ell-1}(t) -(\ell-1){{\bf P}}_{\ell-2}(t).\end{equation}

\begin{thm} \label{exp1} \label{bdd1} \label{Leee} \label{cr}  Let $\ell \ge 0$. For any $n_xa_y\in NA$, we have
 \begin{multline*}                                                                         
\phi_\ell(n_xa_y)={\sqrt{ 2\ell+1}}\cdot \tfrac{\sqrt{\Gamma(2 -\delta)\Gamma(\ell+\delta)}}{\sqrt{\Gamma (\delta) 
\Gamma(\ell+2-\delta)}}\cdot
 \int_{u\in \br^2}\hat{\phi}_u(x,y)\cdot
{\bf{P}}_\ell \left( \tfrac{y^2-|x-u|^2}{y^2+|x-u|^2}
  \right) \, d\nu_j(u). \end{multline*}
In particular,
$$\left|\phi_{\ell}(n_xa_y)\right| \ll \sqrt{\ell+1}\cdot  \phi_0(n_xa_y)$$
with the implied constant independent of $\ell$.
\end{thm}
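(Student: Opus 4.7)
The plan is to reduce the pointwise formula for $\phi_\ell$ to a scalar calculation on $NA$ via Lemma \ref{eqqq}, derive a recursion for certain functions of $(x,y)$, and recognize it as the Legendre three--term recurrence. Writing $\phi_\ell=\psi_\ell/\|\psi_\ell\|_2$ with $\|\psi_\ell\|_2=\|v_\ell\|$ from Lemma \ref{vbdd}, a direct simplification of $\tfrac{1}{\|v_\ell\|}\cdot\tfrac{\ell!\,\Gamma(\ell+\delta)}{\Gamma(\delta)}$ produces precisely the coefficient appearing in the statement. It therefore suffices to prove
\begin{equation*}
\psi_\ell(n_xa_y)=\tfrac{\ell!\,\Gamma(\ell+\delta)}{\Gamma(\delta)}\int_{\br^2}\hat\phi_u(x,y)\,\mathbf{P}_\ell\!\left(\tfrac{y^2-|x-u|^2}{y^2+|x-u|^2}\right)d\nu_j(u).
\end{equation*}

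For this, set $G_\ell(x,y):=\hat\phi_u^{(\ell)}(n_xa_y)$. Since $\exp(t\tilde H)=a_{e^t}\in A$, the path $n_xa_y\exp(t\tilde H)=n_xa_{ye^t}$ never leaves $NA$, so $\tilde H\hat\phi_u^{(\ell-1)}$ restricted to $NA$ coincides with $y\,\partial_y G_{\ell-1}$. The defining recursion for $\hat\phi_u^{(\ell)}$ collapses to
\begin{equation*}
G_\ell(x,y)=a_\ell\,y\,\partial_y G_{\ell-1}(x,y)+b_\ell\,G_{\ell-2}(x,y),\qquad G_0=\hat\phi_u.
\end{equation*}
Setting $r=|x-u|$ and $t=(y^2-r^2)/(y^2+r^2)$, direct differentiation yields the two basic identities $y\,\partial_y\hat\phi_u=-\delta t\,\hat\phi_u$ and $y\,\partial_y t=1-t^2$. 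Writing $G_\ell=h_\ell(t)\,\hat\phi_u$ therefore reduces everything to the scalar recursion
\begin{equation*}
h_\ell(t)=a_\ell\bigl[(1-t^2)h_{\ell-1}'(t)-\delta t\,h_{\ell-1}(t)\bigr]+b_\ell\,h_{\ell-2}(t).
\end{equation*}

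The main step is to verify by induction that $h_\ell(t)=c_\ell\,\mathbf{P}_\ell(t)$ with $c_\ell=\ell!\,\Gamma(\ell+\delta)/\Gamma(\delta)$. The cases $\ell=0,1$ are immediate. For the inductive step I substitute the ansatz into the recursion, clear the derivative using the classical identity $(1-t^2)\mathbf{P}_{\ell-1}'(t)=(\ell-1)\mathbf{P}_{\ell-2}(t)-(\ell-1)t\,\mathbf{P}_{\ell-1}(t)$, and match the resulting coefficients of $t\mathbf{P}_{\ell-1}$ and $\mathbf{P}_{\ell-2}$ against the Legendre relation \eqref{leg}. Using $c_\ell/c_{\ell-1}=\ell(\ell+\delta-1)$ and the factorization $b_\ell=(\ell-1)^2(\ell-\delta)(\ell+\delta-2)$, the matching collapses to the single algebraic identity $-(2\ell-1)+(\ell-\delta)=-(\ell+\delta-1)$. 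Once the explicit formula is established, the pointwise bound is immediate: $|\mathbf{P}_\ell(t)|\le 1$ on $[-1,1]$ gives
\begin{equation*}
|\phi_\ell(n_xa_y)|\le\sqrt{2\ell+1}\cdot\sqrt{\tfrac{\Gamma(2-\delta)\Gamma(\ell+\delta)}{\Gamma(\delta)\Gamma(\ell+2-\delta)}}\,\phi_0(n_xa_y),
\end{equation*}
and Stirling controls the prefactor by a polynomial in $\ell$ of the stated order, with constant independent of $\ell$. The main obstacle is really the Legendre identification in the inductive step; everything else is routine differentiation and Gamma-function bookkeeping.
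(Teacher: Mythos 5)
Your proposal is correct and follows essentially the same route as the paper: reduce to the unnormalized $\psi_\ell$ using the norm computation of Lemma \ref{vbdd}, use Lemma \ref{eqqq} to restrict the $\tilde H$-recursion to $NA$ (where only $y\tfrac{\partial}{\partial y}$ survives, which you justify by the flow $\exp(t\tilde H)=a_{e^t}$ preserving $NA$ rather than by the vanishing of the $\sin(2\theta)$ coefficients at $\theta=0$, a cosmetic difference), and then identify the resulting scalar recursion in $t=\tfrac{y^2-|x-u|^2}{y^2+|x-u|^2}$ with the Legendre three-term recurrence by induction via the identity $(1-t^2){\bf P}_{\ell-1}'=(\ell-1)({\bf P}_{\ell-2}-t{\bf P}_{\ell-1})$, exactly as in the paper's verification of \eqref{show}. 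The coefficient matching with $c_\ell/c_{\ell-1}=\ell(\ell+\delta-1)$ and $b_\ell=(\ell-1)^2(\ell-\delta)(\ell+\delta-2)$ checks out, and the concluding bound via $|{\bf P}_\ell|\le 1$ is the paper's argument as well.
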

\begin{proof} 
 Since $\|\psi_\ell\|$'s have been computed in Lemma \ref{vbdd}, it suffices to show that
\begin{equation}\label{plll}                                                                         
\psi_\ell(n_xa_y)=\ell !
\tfrac{\Gamma(\delta+\ell)}{\Gamma(\delta)}
 \int_{u\in \br^2}
 \hat{\phi}_u(x,y)\cdot {\bf{P}}_\ell \left( \tfrac{y^2-|x-u|^2}{y^2+|x-u|^2}
  \right) d\nu_j(u) .\end{equation}
 Since $\psi_\ell(n_xa_y) =\int_{u\in \br^2}\hat{\phi}_u^{(\ell)}(x,y, e) d\nu_j(u)$ by Lemma \ref{eqqq},
 \eqref{plll} follows if we show:
	\begin{equation}\label{show}
\hat{\phi}_u^{(\ell)}(x, y, e) = 
 \ell! \tfrac{\Gamma(\delta+\ell)}{\Gamma(\delta)}
\cdot \hat{\phi}_u (x,y)\cdot{\bf{P}}_\ell \left(\tfrac{ y^2-|x-u|^2}{y^2+|x-u|^2}\right).\end{equation}
Put  $B(x, y,u) = \frac{ y^2-|x-u|^2}{y^2+|x-u|^2}.$	
Since 
	$$y\tfrac{\partial}{\partial y}\hat{\phi}_u(x, y) = -\delta\cdot \hat{\phi}_u(x, y)\cdot B(x, y,u)\;\; \text{and}\;\;
	y\tfrac{\partial}{\partial y} B = -B^2+1,$$
we have
	\begin{align*}&
	\hat{\phi}_u^{(1)}(x, y, e) = a_1\left(y\tfrac{\partial}{\partial y}\hat{\phi}_u\right)(x,y,e)\\
	&= -\delta \hat{\phi}_u(x, y)\cdot a_1B = \delta \hat{\phi}_u(x, y){\bf P}_1(B)
	\end{align*}
proving the claim for $\ell=1$.
For the induction, we assume that for all $j\le \ell-1$,
	$\hat{\phi}_u^{(j)}(x, y, e) = d_j \hat{\phi}_u(x, y) j!\cdot {\bf P}_j(B)$
 where $d_\ell:=\prod_{j=1}^\ell(\delta+j-1)$. Then
	\begin{align*}&
	\hat{\phi}_u^{(\ell)}(x, y, e)  = a_\ell \cdot\left(y\tfrac{\partial}{\partial y}\hat{\phi}_u^{(\ell-1)}\right)(x, y, e)
 +b_\ell\cdot \hat{\phi}_u^{(\ell-2)}(x, y, e)=\\
	&a_\ell d_{\ell-1} \cdot (\ell-1)!\cdot y\tfrac{\partial}{\partial y}\left(\hat{\phi}_u(x, y){\bf P}_{\ell-1}(B)\right) 
	+b_{\ell} d_{\ell-2}\cdot \hat{\phi}_u(x, y)\cdot (\ell-2)!\cdot {\bf P}_{\ell-2}(B).
	\end{align*}
Since $b_\ell = (\ell-1)^2(-\delta+\ell)(\delta+\ell-2)$, we have $$d_{\ell-2}b_{\ell} = (\ell-1)^2d_{\ell-1}(-\delta+\ell).$$
Therefore \begin{align*}
& \hat{\phi}_u^{(\ell)}(x, y, e) \\ 
&= d_{\ell-1} \left\{ -a_\ell \delta B (\ell-1)! {\bf P}_{\ell-1}(B)  - a_{\ell} (B^2-1)(\ell-1)!
{\bf P}_{\ell-1}'(B) \right.
\\& \left.+(\ell-1)^2(-\delta+\ell)(\ell-2)! {\bf P}_{\ell-2}(B) \right\} \hat{\phi}_u(x, y)
 \\&=d_{\ell-1} \left\{ -a_\ell \delta B(\ell-1)!{\bf P}_{\ell-1}(B) -\delta (\ell-1)^2(\ell-2)!{\bf P}_{\ell-2}(B) \right.\\
	&\left. - a_\ell (B^2-1)(\ell-1)!{\bf P}_{\ell-1}'(B) +\ell (\ell-1)^2(\ell-2)!
 {\bf P}_{\ell-2}(B) \right\} \hat{\phi}_u(x, y). \end{align*}
Since
	$$(\ell-1)(\ell-1)!{\bf P}_\ell(B) = -a_\ell(B^2-1)(\ell-1)!{\bf P}_{\ell-1}'(B)
 +(\ell-1)^2\ell (\ell-2)!{\bf P}_{\ell-2}(B),$$
we have
	\begin{align*}
	\hat{\phi}_u^{(\ell)}(x, y, e) &= d_{\ell-1}\hat{\phi}_u(x, y)\cdot (\delta+\ell-1)\ell!{\bf P}_\ell(B) \\
	&=d_\ell \cdot \hat{\phi}_u(x, y)\cdot \ell!{\bf P}_\ell(B).
	\end{align*}
This proves \eqref{show}, in view of
 the relation $\G(t+1)=t \Gamma(t)$ and $d_\ell=\frac{\Gamma(\delta+\ell)}{\Gamma(\delta)}$.
Hence the first claim of the theorem is proved.

 Since $|{\bf P}_\ell(t)|\le 1$  for $|t|\le 1$
(see p. 987 of \cite{GR}), the second claim follows from the first claim.
\end{proof}

We do not know whether for all
$n_xa_yk\in G$, $\left|\phi_\ell(n_xa_yk)\right|$ is uniformly bounded by $\ell^N \phi_0(n_xa_y)$ for some $N>0$. 
However for our purpose it suffices to prove the following bound:
\begin{thm}\label{bdd}
For each $\ell\geq 0$, there exists a constant $C_\ell >0$ such that for any $n_xa_yk\in NAK$,
	$$\left|\phi_\ell(n_xa_yk)\right| \le C_\ell \cdot \phi_0(n_xa_y).$$
\end{thm}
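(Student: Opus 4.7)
The plan is to extend the integral representation of Lemma \ref{eqqq}, which evaluates $\psi_\ell$ at $n_xa_y$, to an analogous expression valid at any $n_xa_yk\in NAK$, and then bound the integrand uniformly in $k$. For $n=2$, the bound is immediate: $V_\ell$ is a one-dimensional $K$-representation on which $k_\theta$ acts by $e^{2i\ell\theta}$, so $|\phi_\ell(n_xa_yk_\theta)|=|\phi_\ell(n_xa_y)|$, which is $\ll\phi_0(n_xa_y)$ by Theorem \ref{ub}.

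For $n=3$, since each $\mathcal Z_j$ is a left-invariant differential operator, it commutes with integration against $\nu_j$, yielding
$$\psi_\ell(n_xa_yk)\;=\;\int_{u\in\br^2}\bigl(\mathcal Z_\ell\circ\cdots\circ\mathcal Z_1\hat\phi_u\bigr)(n_xa_yk)\,d\nu_j(u).$$
The first task is to derive Iwasawa-coordinate expressions for $R, L, E^\pm$ analogous to \eqref{ht} and to record the following structural feature: for every $X\in\mathfrak g$ viewed as a left-invariant vector field on $G$, the coefficient of $\partial_{x_i}$ or $\partial_y$ in Iwasawa coordinates is of the form $y$ times a bounded smooth function of $k$, while the coefficient of any $K$-derivative $\partial_\theta,\partial_{\mu_1},\partial_{\mu_2}$ is itself bounded in $k$. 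Iterating via Leibniz, $\mathcal Z_\ell\circ\cdots\circ\mathcal Z_1$ is a differential operator of order $\leq 2\ell$ which, when written in the basis of weighted vector fields $y\partial_{x_1},y\partial_{x_2},y\partial_y,\partial_\theta,\partial_{\mu_1},\partial_{\mu_2}$, has coefficients uniformly bounded on $\br^2\times\br_{>0}\times K$. Applying it to $\hat\phi_u(x,y)$, which does not depend on $k$, only terms involving pure $(x,y)$-derivatives of $\hat\phi_u$ survive, leaving a finite sum
$$\sum_{|\alpha|\leq 2\ell} c_\alpha(x,y,k)\,(y\partial_{(x,y)})^{\alpha}\hat\phi_u(x,y)$$
with the $|c_\alpha|$ uniformly bounded.

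The remaining step is the pointwise bound $|(y\partial_{(x,y)})^\alpha\hat\phi_u|\le C_\alpha\hat\phi_u$. A direct computation yields $y\partial_{x_i}\hat\phi_u/\hat\phi_u=-2\delta\,y(x_i-u_i)/(|x-u|^2+y^2)$ and $y\partial_y\hat\phi_u/\hat\phi_u=-\delta(y^2-|x-u|^2)/(y^2+|x-u|^2)$, both of absolute value at most $\delta$ by the AM--GM inequality $|x-u|^2+y^2\ge 2y|x-u|$; a straightforward induction then extends this to any monomial $(y\partial_{(x,y)})^\alpha$. Combining the two steps gives $|(\mathcal Z_\ell\cdots\mathcal Z_1\hat\phi_u)(n_xa_yk)|\le C_\ell\hat\phi_u(x,y)$ uniformly in $k$; integrating against $\nu_j$ and then dividing by $\|\psi_\ell\|_2$ (computed in Lemma \ref{vbdd}) yields Theorem \ref{bdd}. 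The main obstacle will be the explicit Iwasawa-coordinate derivation for $R, L$ analogous to \eqref{ht}, together with the inductive verification that the ``$y$-weighted'' structure of the coefficients is preserved under composition as the $K$-derivatives $\partial_\theta,\partial_{\mu_i}$ generated at intermediate stages act on the $k$-dependent coefficients of previous stages; both are mechanical but require care.
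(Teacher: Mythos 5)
Your overall idea---extend the integral representation to all of $NAK$ and bound the differentiated kernel by $\hat\phi_u$ uniformly in $k$---is the same as the paper's, and your $n=2$ reduction and the bound $|(y\partial_{(x,y)})^\alpha\hat\phi_u|\le C_\alpha\hat\phi_u$ are sound (though the latter rests on the closure of $A_1,A_2,B$ under the weighted derivatives, i.e.\ on identities like $y\tfrac{\partial}{\partial x_1}A_1=A_1^2+B-1$, not on simply iterating the first-order inequality). But your implementation plan has a genuine flaw, and it also misses the device that makes the paper's proof short. The flaw: it is not true that every $X\in\mathfrak g$, written in the coordinates $(x_1,x_2,y,\mu_1,\mu_2,\theta)$, has bounded coefficients in front of $\partial_{\mu_1},\partial_{\mu_2},\partial_\theta$. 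The chart $k_{\mu_1,\mu_2,\theta}$ on $\PSU(2)$ is singular at $\theta=0,\pi/2$, and the left-invariant fields $E^{\pm}$ (and the $\mathfrak k$-components of $R,L$) have chart coefficients blowing up like $1/\sin 2\theta$ there---this is visible in the factor $\tfrac{1}{\sin^2 2\theta}$ in the expression for $\mathcal C_K$ used in the proof of Lemma \ref{Le}. (The formula \eqref{ht} for $\tilde H$ is special: its only $K$-derivative is $\tfrac{\sin 2\theta}{2}\tfrac{\partial}{\partial\theta}$, whose coefficient vanishes at the singular locus.) Consequently your term-by-term bound on the order-$2\ell$ expansion of $\mathcal Z_\ell\circ\cdots\circ\mathcal Z_1$ does not go through as written; it can be repaired by treating the $K$-direction invariantly (every function of $k$ that arises is smooth on the compact group $K$, hence bounded), but that is precisely the bookkeeping you defer as ``mechanical but requiring care,'' and it is where the real work would lie.

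The paper avoids all of this. By Theorem \ref{zl2}, $\psi_\ell=a_\ell\tilde H\psi_{\ell-1}+b_\ell\psi_{\ell-2}$, so only the single first-order operator $\tilde H$, with the explicit globally bounded expression \eqref{ht}, is ever applied; $R,L,E^{\pm}$ never enter. Defining $\hat\phi_u^{(\ell)}$ by the same two-term recursion and computing how $\tilde H$ acts on the six functions $\Phi_1,\Phi_2,\Psi$ (bounded functions of $k$) and $A_1,A_2,B$ (bounded functions of $(x,y,u)$), one checks that the system is closed, so that $\hat\phi_u^{(\ell)}=\hat\phi_u\cdot p_\ell(\Phi_1,\Phi_2,A_1,A_2,\Psi,B)$ for a fixed polynomial $p_\ell$, whence $|\hat\phi_u^{(\ell)}(x,y,k)|\le C_\ell\,\hat\phi_u(x,y)$ at once; integrating against $\nu_j$ and dividing by $\|\psi_\ell\|_2$ from Lemma \ref{vbdd} gives the theorem. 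I recommend you replace the composition $\mathcal Z_\ell\circ\cdots\circ\mathcal Z_1$ by this recursion, which is already established; your Iwasawa analysis of $R,L,E^{\pm}$ then becomes unnecessary.
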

\begin{proof}
%Recall that any $g\in G$ is of the form $n_xa_y k_{\mu_1,\mu_2,\theta}$ uniquely.
We use notations from the proof of Theorem \ref{cr}.
We compute	$$y\tfrac{\partial}{\partial x_1} \hat{\phi}_u = \delta\cdot \hat{\phi}_u \cdot \tfrac{-2y(x_1-u_1)}{|x-u|^2+y^2}, 
	\;\;y\tfrac{\partial}{\partial x_2} \hat{\phi}_u = \delta\cdot\hat{\phi}_u\cdot\tfrac{-2y(x_2-u_2)}{|x-u|^2+y^2}$$
and
	$$y\tfrac{\partial}{\partial y} \hat{\phi}_u = \delta \cdot \hat{\phi}_u\cdot \tfrac{|x-u|^2 -y^2}{|x-u|^2+y^2}.$$
Let
	$$A_1(x, y, u) = \tfrac{-2y(x_1-u_1)}{|x-u|^2+y^2},\;\; A_2(x, y, u) = \tfrac{-2y(x_2-u_2)}{|x-u|^2+y^2}$$
	and
	$$B(x, y, u) = \tfrac{|x-u|^2 -y^2}{|x-u|^2+y^2}.$$
For $\mu=(\mu_1, \mu_2)$ and $0\le \theta<\pi$, define
	$$\Phi_1(\mu, \theta) := -\sin(2\theta)\cos(\mu_1+\mu_2),\text{ and }\Phi_2(\mu, \theta) := -\sin(2\theta)\sin(\mu_1+\mu_2)$$
	and
	$\Psi(\theta) := \cos(2\theta)$
so that by \eqref{ht},
	$$\tilde H = \Phi_1y\frac{\partial}{\partial x_1} +\Phi_2y\frac{\partial}{\partial x_2} +\Psi y\frac{\partial}{\partial y}+
\frac{\sin(2\theta)}{2} \frac{\partial}{\partial \theta}.$$
Hence
	$$\tilde H\hat{\phi}_u = \delta\cdot \hat{\phi}_u\cdot\left(\Phi_1 A_1 +\Phi_2 A_2 +\Psi B\right);$$
so
	$$\hat{\phi}_u^{(1)} = \hat{\phi}_u\cdot a_1\delta\cdot \left(\Phi_1 A_1 +\Phi_2 A_2 +\Psi B\right).$$
We compute
	$$
	y\tfrac{\partial }{\partial x_1} A_1 
	=A_1^2+ B-1,\;\;
	y\tfrac{\partial}{\partial x_2} A_1 
	=A_1A_2 +B-1,\;\;
	y\tfrac{\partial}{\partial y} A_1 
	=A_1B;
	$$ $$
	y\tfrac{\partial}{\partial x_1} A_2 
	=A_1A_2 + B-1,\;\;
	y\tfrac{\partial}{\partial x_2}A_2 
	=A_2^2+B-1,\;\;
	y\tfrac{\partial }{\partial y} A_2 
	=A_2B;$$ and
	$$y\tfrac{\partial }{\partial x_1}B = A_1(B-1),\;\;y\tfrac{\partial}{\partial x_2}B =A_2(B-1),\;\;
	y\tfrac{\partial}{\partial y}B = (B-1)(B+1)
	.$$
We also compute:
	$$\tilde H(\Phi_1) = \Phi_1\Psi,\quad
	\tilde H(\Phi_2) = \Phi_2\Psi$$
	and $$\tilde H(\Psi) = -\Phi_1^2-\Phi_2^2 = -1+\Psi^2.$$

It follows that
	$\hat{\phi}_u^{(\ell)} = \hat{\phi}_u\cdot p_\ell(\Phi_1, \Phi_2, A_1, A_2, \Psi, B)$
where $p_\ell$ is a polynomial in $\Phi_1$, $\Phi_2$, $A_1$, $A_2$, $\Psi$ and $B$, whose coefficients are given
by monomials in
	$\left\{\pm 1, a_1, \ldots, a_{\ell}, b_1,\ldots, b_\ell, \delta\right\}.$
Since the absolute values of
$\Phi_1, \Phi_2, \Psi, A_1, A_2$ and $B$ are all bounded above by $1$,
 we deduce $$\left|\hat{\phi}_u^{(\ell)}(x, y, k) \right| \leq C_\ell \cdot \hat{\phi}_u(x, y).$$
for some constant $C_\ell >0$ independent of $x,y,k$.
Since $$\psi_\ell(n_xa_yk) =\int_{u\in \Lambda(\G)}\hat{\phi}_u^{(\ell)}(x,y,k) d\nu_j(u),$$
the claim follows.
\end{proof}

\section{Horospherical average of $\phi_{\ell}$}\label{s:h}
We let the notations $G=NAK, \G, \delta, \phi_0, \phi_\ell$, etc., be as in Section \ref{s:phl}.
We assume that $\G$ is geometrically finite with $(n-1)/2 <\delta <(n-1)$ and that
$\G\ba \G N$ is closed in $\G\ba G$ in the whole section.
The main goal of this section is to compute
the horospherical average of $\phi_\ell$ over $(\G\cap N)\ba N$ (Theorem \ref{ub2}).
Consider $\bH^n$ for $n=2$ or $3$ according as $G=\PSL_2(\br)$ or $\PSL_2(\c)$.
We let $X_0\in \T^1(\bH^n) $ be the upward normal vector based at $j=(0_{n-1},1)\in \bH^n$.
The map $g\mapsto g(X_0)$ induces the identification of
$G/M$ with $\T^1(\bH^n)$.
The horosphere in $\T^1(\bH^n)$ corresponding to $NM/M$ consists of the upward
 normal vectors on the
horizontal plane containing $j$, and hence based at $\infty\in \partial(\bH^n)$.

The assumption that $\G\ba \G N$ is closed is equivalent to saying that either
$\infty\notin \Lambda(\G)$ or $\infty$ is a parabolic fixed point of $\G$ \cite{Dalbo}.
Recall $\xi\in \Lambda(\G)$ is a parabolic fixed point if it is a unique fixed point in $\partial(\bH^n)$
of an element of $\G$. One of the important features of a geometrically finite group $\G$
is that any parabolic fixed point $\xi$ of $\G$ is bounded, meaning that the stabilizer
 $\op{Stab}_\G(\xi)$ acts cocompactly
on $\Lambda(\G)-\{\xi\}$ \cite{Bow}.

Therefore our assumption on the closedness of $\G\ba \G N$ implies the following:
\begin{lem}\label{bow}
$N\cap \G$ acts cocompactly on $\Lambda(\G)-\{\infty\}$.\end{lem}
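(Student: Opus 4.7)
The plan is to split into the two cases that the cited Dalbo result gives for closedness of $\G\ba \G N$, namely either $\infty\notin \Lambda(\G)$ or $\infty$ is a parabolic fixed point of $\G$, and treat each case separately. The combinatorial/topological content comes almost entirely from Bowditch's theorem on bounded parabolic fixed points, which is stated in the surrounding text.

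First I would dispatch the case $\infty\notin \Lambda(\G)$. If $\gamma\in \G\cap N$ were non-trivial, then $\gamma$ is a non-trivial parabolic element (every non-identity element of $N$ is parabolic fixing $\infty$), so $\infty$ would have to lie in $\Lambda(\G)$, a contradiction. Using the torsion-free hypothesis on $\G$ made in the introduction, this forces $\G\cap N=\{e\}$. On the other hand, $\Lambda(\G)$ is a closed subset of the compact boundary $\partial(\bH^n)=\br^{n-1}\cup\{\infty\}$, and removing a point off the limit set leaves $\Lambda(\G)=\Lambda(\G)-\{\infty\}$ compact. Cocompactness of the trivial action is then immediate.

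In the second case, $\infty$ is a parabolic fixed point of the geometrically finite group $\G$, so by the Bowditch theorem quoted just above the lemma, $\infty$ is bounded: the full stabilizer $\op{Stab}_\G(\infty)$ acts cocompactly on $\Lambda(\G)-\{\infty\}$. The remaining task is to identify $\op{Stab}_\G(\infty)$ with $\G\cap N$. The stabilizer of $\infty$ in $G$ is $NAM$, so an element of $\op{Stab}_\G(\infty)$ decomposes as $nam$. Torsion-freeness kills elliptic contributions from $M$; and since $\infty$ is a parabolic fixed point, it cannot simultaneously be fixed by a loxodromic element of the discrete group $\G$ (otherwise one finds arbitrarily small elements in $\G$ via conjugation by powers of the loxodromic, contradicting discreteness). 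Hence no non-trivial $A$ component can occur, so $\op{Stab}_\G(\infty)\subset N$, yielding $\op{Stab}_\G(\infty)=\G\cap N$ and the claim.

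The main obstacle, and really the only non-cosmetic point, is the identification $\op{Stab}_\G(\infty)=\G\cap N$ in the parabolic case, i.e.\ ruling out loxodromic elements of $\G$ with an endpoint at the parabolic fixed point $\infty$. This is a standard Kleinian-groups lemma that follows from discreteness, and the rest is bookkeeping around Dalbo's and Bowditch's theorems that are already cited in the text.
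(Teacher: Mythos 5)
Your proof is correct and follows exactly the route the paper intends: the lemma is asserted there as an immediate consequence of Dalbo's criterion for closedness of $\G\ba\G N$ together with Bowditch's theorem that parabolic fixed points of geometrically finite groups are bounded. The extra details you supply (compactness of $\Lambda(\G)$ when $\infty\notin\Lambda(\G)$, and the standard discreteness/torsion-free argument identifying $\op{Stab}_\G(\infty)$ with $N\cap\G$ in the parabolic case) are the correct, routine steps the paper leaves implicit.
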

As mentioned in the introduction, the rank of $\infty$ is the rank of $\G\cap N$
as a free abelian group.

\medskip
 
\begin{dfn}
	Given $\psi\in C(\Gamma\bsl G)^M$, define $\psi^N\in C(\Gamma\ba G)^M$ by
	$$\psi^N(g) := \int_{n_x\in (N\cap \Gamma)\bsl N} \psi(n_xg)\; dx\;$$
where $dx$ denotes the Lebesgue measure on $\br^n$, provided the integral converges.
\end{dfn}

\begin{prop}\label{co} There exists $c_n(0) >0$ such that for all $y>0$, 
 $$\phi_0^N (a_y) = c_n(0) \cdot y^{n-1-\delta}.$$
 \end{prop}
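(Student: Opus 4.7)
The starting point is Sullivan's explicit formula
\[
\phi_0(n_xa_y) \;=\; \int_{\Lambda(\Gamma)} \hat{\phi}_u(x,y)\, d\nu_j(u),\qquad \hat{\phi}_u(x,y)=\left(\frac{(|u|^2+1)y}{|x-u|^2+y^2}\right)^{\delta},
\]
which lets me write $\phi_0^N(a_y)$ as the double integral
\[
\phi_0^N(a_y)=\int_{F}\!\int_{\Lambda(\Gamma)}\hat{\phi}_u(x,y)\,d\nu_j(u)\,dx,
\]
where $F\subset\R^{n-1}$ is a measurable fundamental domain for the translation action of $\Gamma\cap N$ on $N\simeq\R^{n-1}$. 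The whole argument will consist of swapping the two integrations (justified by nonnegativity) and then unfolding the $u$-integral against the $x$-integral using the $(\Gamma\cap N)$-conformality of $\nu_j$.

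\textbf{The unfolding step.} By Lemma \ref{bow}, $\Gamma\cap N$ acts cocompactly on $\Lambda(\Gamma)-\{\infty\}$, so I can pick a precompact fundamental domain $\Lambda_0\subset\R^{n-1}$ for this action. Writing $\Lambda(\Gamma)-\{\infty\}=\bigsqcup_{\gamma\in\Gamma\cap N}\gamma\Lambda_0$ and using the Patterson-Sullivan conformality
\[
d\nu_j(\gamma v)= e^{\delta\,\beta_{v}(j,\gamma^{-1}j)}\,d\nu_j(v),\qquad \gamma=n_t\in\Gamma\cap N,
\]
together with the identity $e^{\delta\,\beta_{v}(j,\gamma^{-1}j)}=\bigl(\tfrac{|v|^2+1}{|v+t|^2+1}\bigr)^{\delta}$, I find that the factor $(|v+t|^2+1)^{\delta}$ coming from $\hat{\phi}_{\gamma v}$ cancels exactly against the Radon-Nikodym factor from $\nu_j$. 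What remains inside the sum is
\[
\hat{\phi}_{\gamma v}(x,y)\cdot e^{\delta\,\beta_{v}(j,\gamma^{-1}j)}=\left(\frac{(|v|^2+1)\,y}{|x-(v+t)|^2+y^2}\right)^{\delta}.
\]
Summing $\gamma=n_t$ over $\Gamma\cap N$ and integrating $x$ over $F$ merges the pieces into an integral over the whole $\R^{n-1}$:
\[
\sum_{t}\int_{F}\frac{dx}{(|x-(v+t)|^2+y^2)^{\delta}}=\int_{\R^{n-1}}\frac{dx}{(|x|^2+y^2)^{\delta}}= I_\delta\, y^{n-1-2\delta},
\]
where $I_\delta:=\int_{\R^{n-1}}(1+|x|^2)^{-\delta}\,dx$ is finite thanks to the standing assumption $\delta>(n-1)/2$.

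\textbf{Conclusion and positivity.} Combining everything, I obtain
\[
\phi_0^N(a_y)= y^{n-1-\delta}\cdot I_\delta\cdot \int_{\Lambda_0}(1+|v|^2)^{\delta}\,d\nu_j(v),
\]
so the desired constant is $c_n(0)=I_\delta\cdot\int_{\Lambda_0}(1+|v|^2)^{\delta}d\nu_j(v)$. Positivity follows because $I_\delta>0$ and $\nu_j$ has positive mass on any relatively open subset of $\Lambda(\Gamma)$; finiteness of the second factor is where I use that $\Lambda_0$ is bounded in $\R^{n-1}$, which in turn rests on Lemma \ref{bow}. In the degenerate case $\infty\notin\Lambda(\Gamma)$, the rank of $\Gamma\cap N$ may be $0$; then $\Lambda_0=\Lambda(\Gamma)$ is itself compact and the $(\Gamma\cap N)$-sum is trivial, but the same formula for $c_n(0)$ still results. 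Note also that this matches the prescription for $\kappa_\Gamma$ in Theorem \ref{main2}, as it should.

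\textbf{Main obstacle.} The only delicate point is the unfolding when $\infty$ is a (necessarily bounded) parabolic fixed point of $\Gamma$ and the rank of $\Gamma\cap N$ is strictly less than $n-1$: one must make sure that the fundamental domain $\Lambda_0$ can be chosen bounded in $\R^{n-1}$ even though $(\Gamma\cap N)\backslash N$ has an $\R^{n-1-k}$ factor. This is exactly the content of Lemma \ref{bow}, and it is the reason the assumption that $\Gamma\backslash\Gamma N$ is closed is needed: without it the integrals defining $\phi_0^N$ and $c_n(0)$ would diverge.
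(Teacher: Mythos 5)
Your argument is correct, but it is not the route the paper takes. The paper quotes from \cite{KO} that $\phi_0^N(a_y)$ converges and has the two-exponent form $c_n(0)y^{n-1-\delta}+d_n(0)y^{\delta}$ (coming from the second-order ODE in $y$ satisfied by the horospherical average of a $\Delta$-eigenfunction), and the whole content of its proof is to kill the $y^{\delta}$ term case by case: by an explicit computation when $\infty\notin\Lambda(\G)$, and, when $\infty$ is a bounded parabolic fixed point, by a Cauchy--Schwarz argument in the cusp showing that $d_n(0)>0$ would force $\|\phi_0\|_2=\infty$ since $\delta>(n-1)/2$ (with an extra decay input from \cite[Prop.~4.6]{KO} in the rank-one, $n=3$ case). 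You instead unfold the $\nu_j$-integral against the $(N\cap\G)$-sum using the conformality cocycle $e^{\delta\beta_v(j,n_{-t}j)}=\bigl(\tfrac{|v|^2+1}{|v+t|^2+1}\bigr)^{\delta}$ — the same cancellation that makes $\mu^{\PS}_N$ invariant under $N\cap\G$, which the paper only exploits later in Section 7 — and Tonelli plus the boundedness of $F_\Lambda$ (Lemma \ref{bow}) and $\delta>(n-1)/2$ give convergence and the clean identity $c_n(0)=\kappa_\G=\int_{\br^{n-1}}(1+|x|^2)^{-\delta}dx\cdot\mu^{\PS}_N((N\cap\G)\ba N)$ uniformly in the cusp rank; this is consistent with the paper's explicit formula in the case $\infty\notin\Lambda(\G)$ and with Theorem \ref{main2}. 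What your approach buys is a single self-contained computation that yields absolute convergence, the vanishing of any $y^{\delta}$ term, and the explicit value of $c_n(0)$ all at once (information the paper recovers only indirectly via Roblin's results in Section 7); what the paper's approach buys is that it avoids the conformal-density bookkeeping entirely by leaning on the structure already established in \cite{KO} and on the $L^2$-finiteness of $\phi_0$, which is the mechanism it reuses elsewhere. Do make sure you take $\Lambda_0$ to be a genuine Borel fundamental domain inside a compact set (as the paper does with $F_\Lambda\subset F_0$ in Section 5), and note that the tiling $\bigsqcup_{n_t\in\G\cap N}(F-t)=\br^{n-1}$ you use holds for every rank $0\le k\le n-1$, so no case analysis is needed.
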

\begin{proof}
In \cite{KO},  it was shown that $\phi_0^N (a_y)$ converges absolutely and
that there are constants $c_n(0)>0$ and $d_n(0)\in \br$ such that for all $y>0$
	\begin{equation}\label{all} \phi_0^N (a_y) = c_n(0) y^{n-1-\delta}+ d_n(0) y^{\delta}.\end{equation}
Since $\phi_0 >0$ and \eqref{all} holds for all $y>0$, it follows that $d_n(0)\ge 0$. 
 We claim that $d_n(0)=0$.

When $\infty\notin \Lambda(\G)$, $\Lambda(\G)$ is a bounded subset of $\br^{n-1}$ and
we can show by direct computations:
\begin{equation}\phi_0^N(a_y)=\begin{cases}
              \frac{\sqrt{\pi} \G(\delta-\tfrac 12)}{\G(\delta)}
 \cdot \int_{u\in \Lambda(\G)} (|u|^2+1)^\delta \;d\nu_j (u)\cdot
  y^{1-\delta}            &\text{if $n=2$}  \\                    
\tfrac{\pi}{\delta-1} \cdot \int_{u\in \Lambda(\G)} (|u|^2+1)^\delta \;d\nu_j (u)\cdot
  y^{2-\delta} &\text{if $n=3$}
 \end{cases} \end{equation} (see \cite{KO}).

Now suppose $\infty\notin \Lambda(\G)$.
As $\G$ is geometrically finite, $\Gamma$ admits a polyhedron
fundamental domain $\mathcal F$ in $\mathbb H^n$ such that
$F_0 \times [Y_0,\infty)$ injects to $\mathcal F$ for some $Y_0\gg 1$ where $F_0$ is a fundamental domain
in $\br^{n-1}$ for $N\cap \G$.
Let $B_t=\{x\in F_0: |x|<t\}$ for $t>1$. 

When $\infty$ is a bounded parabolic fixed point of rank $n-1$, take $t_0$ so that
$B_{t_0}=F_0$, which is possible since $F_0$ is bounded in this case.
Then 
\be\label{zzz}{\int_{x\in B_{t_0}}\phi_0(n_x a_y)dx}\ge \tfrac{d_n(0)}{2} y^{\delta}.\ee
Therefore using the Cauchy-Schwartz inequality, we have \begin{align*} \|\phi_0\|_2^2 &\ge 
\int_{Y_0}^{\infty}\int_{B_{t_0}}\phi_0(n_xa_y)^2  y^{-n} dx dy\\ 
 &\ge  \tfrac{1}{\text{vol}(B_{t_0})} \int_{Y_0}^{\infty}
 \left( \int_{B_{t_0}}\phi_0(n_xa_y)dx\right)^2  y^{-n}  dy\\
&\ge    \tfrac{d_n(0)^2}{4\text{vol}(B_{t_0})} \int_{Y_0}^{\infty}  y^{2\delta-n}  dy.
\end{align*}
Since $\delta>(n-1)/2$, $\|\phi_0\|_2=\infty$ unless $d_n(0)\ne 0$.  Therefore $d_n(0)=0$.

The remaining case is when $n=3$ and $\infty$ is a bounded parabolic fixed point of rank one. In this case,
it was shown in the proof of \cite[Prop. 4.6]{KO} that for all sufficiently large $t\gg 1$, 
there exists $b_t\to 0$ as $t\to \infty$ such that for all $y>0$, $${\int_{x\in F_0- B_{t}}\phi_0(n_x a_y)dx}\le b_t y^{\delta}.$$
Therefore, if $d_3(0)$ were positive, then for some large $t_0>0$, we would have
\be\label{zzzz}{\int_{x\in B_{t_0}}\phi_0(n_x a_y)dx}\ge \tfrac{d_3(0)}{2} y^{\delta}.\ee
By repeating the same argument as in the previous case, this leads to a contradiction.
\end{proof}

\begin{lem}\label{zero}
 Let $y>0$ and $k\in K$.
For each $1\le i\le n-1$, we have
$$\int_{(N\cap \Gamma)\ba N} \tfrac{\partial}{\partial x_i} \phi_\ell(n_x a_yk)\; dx = 0.$$
\end{lem}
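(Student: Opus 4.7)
The plan is a translation-invariance argument on the horospherical quotient $(\Gamma\cap N)\ba N$. Fix $y>0$ and $k\in K$, and set $F(x):=\phi_\ell(n_xa_yk)$ for $x\in\br^{n-1}$. Since $\phi_\ell$ is left $\Gamma$-invariant and $\Gamma\cap N$ acts by translations on $N\cong\br^{n-1}$, the function $F$ is invariant under the lattice $\Gamma\cap N$. By Theorem~\ref{bdd} (or Theorem~\ref{ub} when $n=2$), $|F(x)|\le C_\ell\,\phi_0(n_xa_y)$, and by Proposition~\ref{co} the right-hand side integrates to $c_n(0)y^{n-1-\delta}$ on any fundamental domain $\mathcal F$ for $(\Gamma\cap N)\ba N$; hence $F\in L^1(\mathcal F)$.

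For $t\in\br$, set $I(t):=\int_{\mathcal F}F(x+te_i)\,dx$. The substitution $x\mapsto x+te_i$ combined with the observation that $\mathcal F+te_i$ is again a fundamental domain for $(\Gamma\cap N)\ba N$, together with the $(\Gamma\cap N)$-invariance of $F$, yields
$$I(t)=\int_{\mathcal F+te_i}F(x')\,dx'=\int_{\mathcal F}F(x')\,dx'=I(0).$$
Thus $I$ is constant in $t$, so $I'(0)=0$. Interchanging derivative and integral (justified below) then gives
$$0=I'(0)=\int_{\mathcal F}\partial_{x_i}F(x)\,dx,$$
which is precisely the claim.

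The interchange is carried out by dominated convergence applied to the difference quotient $t^{-1}(F(x+te_i)-F(x))$. By the mean value theorem this reduces to producing a bound of the form $|\partial_{x_i}\phi_\ell(n_xa_yk)|\le \tilde C_\ell(y)\,\phi_0(n_xa_y)$ that is uniform in $x$ and $k$, since any such dominating function is $(\Gamma\cap N)$-invariant and integrable on $\mathcal F$ by Proposition~\ref{co}. Note that this approach avoids the need to prove pointwise decay of $\phi_\ell(n_xa_yk)$ along the non-periodic directions of $(\Gamma\cap N)\ba N$ (a delicate point when, for $n=3$, $\infty$ is a rank-one parabolic fixed point).

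The main technical obstacle is this uniform pointwise bound on $\partial_{x_i}\phi_\ell$. It is obtained by the same method as in the proof of Theorem~\ref{bdd}, with $\partial_{x_i}$ in place of $\tilde H$: the identities $y\partial_{x_j}\hat\phi_u=\delta\hat\phi_u A_j$ and $y\partial_{x_j}B=A_j(B-1)$, where $|A_j|,|B|\le 1$, feed into the recursive definition of $\hat\phi_u^{(\ell)}$ and produce a bounded polynomial expression in $A_j,B,\Phi_j,\Psi$ multiplied by $y^{-1}\hat\phi_u$; the $k$-dependence is absorbed exactly as in Theorem~\ref{bdd}, via the Iwasawa-coordinate expansion of the relevant right-invariant vector fields. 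Integrating against $\nu_j$ then yields the required dominating function, and the translation-invariance argument above concludes the proof.
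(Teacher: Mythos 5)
Your argument is correct, but it takes a genuinely different route from the paper's. The paper proves the lemma coordinate by coordinate via the fundamental theorem of calculus, with a case analysis on the rank of $\infty$: in periodic directions the $(N\cap\G)$-invariance of $\phi_\ell$ makes the boundary terms cancel; in non-periodic directions it uses pointwise decay, namely $\phi_0(n_xa_y)\to 0$ as $|x|\to\infty$ when $\infty\notin\Lambda(\G)$, and as $x_2\to\infty$ in the rank-one case (quoting Sullivan), combined with the bounds $|\phi_\ell|\ll_\ell \phi_0$ of Theorems \ref{ub} and \ref{bdd}; the rank-two case is Green's theorem. You instead observe that translation by $te_i$ descends to a measure-preserving map of $(N\cap\G)\ba N$, so $I(t)=\int F(x+te_i)\,dx$ is constant, and differentiate under the integral. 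This buys a uniform treatment with no case analysis and no decay input along the cusp (as you note, the delicate rank-one point is bypassed), but it costs an estimate the paper never states, $|\partial_{x_i}\phi_\ell(n_xa_yk)|\ll_\ell y^{-1}\phi_0(n_xa_y)$ uniformly in $x,k$; your sketch of it is sound, since applying $y\partial_{x_i}$ to $\hat\phi_u\cdot p_\ell(\Phi_1,\Phi_2,A_1,A_2,\Psi,B)$ again yields $\hat\phi_u$ times a bounded polynomial in the same quantities ($\partial_{x_i}$ annihilates $\Phi_j,\Psi$), and for $n=2$ one differentiates the explicit formula of Theorem \ref{ub} directly. One small point to patch in the interchange step: the mean value theorem controls the difference quotient by $|\partial_{x_i}F|$ at an intermediate point $x+se_i$, so the dominating function is really $\sup_{|s|\le 1}\phi_0(n_{x+se_i}a_y)$ rather than $\phi_0(n_xa_y)$. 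This is harmless: either note $\phi_0(n_{x+se_i}a_y)\ll_y \phi_0(n_xa_y)$ for $|s|\le 1$, which is immediate from the Poisson-kernel formula \eqref{ppp}, or avoid dominated convergence altogether by writing $I(t)-I(0)=\int_0^t\bigl(\int \partial_{x_i}F(x+se_i)\,dx\bigr)\,ds$ via Fubini and using that $\partial_{x_i}F$ is itself $(N\cap\G)$-invariant and absolutely integrable (by your derivative bound and Proposition \ref{co}), so the inner integral is independent of $s$ and must vanish.
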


\begin{proof}  By Theorems \ref{ub} and \ref{bdd}, we have, for some $C_\ell >0$,
\be\label{bdd2} |\phi_\ell(n_x a_yk)|\le C_\ell \cdot \phi_0(n_x a_y).\ee

Suppose $n=3$.
If $\infty\notin \Lambda(\G)$ and hence $N\cap \G=\{e\}$,
 $|\phi_0(n_x a_yk)|\to 0$ as $|x|\to \infty$. Hence, by \eqref{bdd2}, as $|x|\to \infty$,
$$|\phi_\ell(n_xa_yk)|\to 0 .$$
 Therefore
\begin{align*} & \int_{x_1\in \br } 
\tfrac{\partial}{\partial x_1} \phi_\ell(n_x a_yk)\; dx_1  
\\&= \lim_{t\to\infty} \int_{-t}^{t} \tfrac{\partial}{\partial x_1} \phi_{ \ell}(n_x a_yk)\; dx_1 \\ &=
\lim_{t\to \infty} (  \phi_{ \ell}(n_{t+\sqrt{-1}x_2} a_yk)  -
 \phi_{ \ell}(n_{-t+\sqrt{-1} x_2} a_yk))=0.
\end{align*}
 The other case of $i=2$ is symmetric to this one.

When $\infty$ is a bounded parabolic fixed point of rank one,
we may assume without loss of generality that
$N\cap \G$ is generated by $n_{1}=\begin{pmatrix} 1 & 1\\ 0 & 1\end{pmatrix}$, so that there exists a
 fundamental domain $F_0$ in $\br^{2}$ inside $\{(x_1,x_2): 0\le x_1\le 1\}$.  In this case,
$\phi_0(n_x a_y)\to 0$
as $x_2\to \infty$ \cite{Sullivan1984}.
Hence
$|\phi_\ell(n_xa_yk)|\to 0$, as $x_2\to \infty$.
By a similar argument as above, this implies that $$
\int_{x_2\in \br } 
\tfrac{\partial}{\partial x_2} \phi_\ell(n_x a_yk)\; dx_2  =0 .$$
On the other hand, by the $n_1\in N\cap \G$-invariance of $\phi_\ell$,
$$\int_{x_1\in [0,1] } 
\tfrac{\partial}{\partial x_1} \phi_\ell(n_x a_yk)\; dx_1  =
  \phi_{ \ell}(n_{1} n_{\sqrt{-1} x_2} a_yk)  
 -\phi_{ \ell}(n_{\sqrt{-1} x_2} a_yk)=0 .$$

If $\infty$ has rank $2$,
we may assume that $N\cap \G$ is generated by $n_1$ and $n_{\sqrt{-1}}$. Then the
 the claim follows from Green's theorem and the invariance of $\phi_\ell$
by $N\cap \G$ as in the last argument. The case $n=2$ can be shown similarly.
\end{proof}

 For $\ell\ge 0$, define
 \begin{equation}\label{mlt} M_\ell(\theta):=\ell !\cdot {\bf P}_{\ell}(-\cos 2\theta). \end{equation}
Then  $M_0(\theta) = 1$ and the recursive relation \eqref{leg} for ${\bf P}_\ell$'s implies:
\begin{lem}\label{l_ml}\label{ml}
 \begin{enumerate}
  \item  For $\ell \ge 1$,
	$$M_\ell(\theta) = (-2\ell+1)\cos(2\theta)M_{\ell-1}(\theta)-(\ell-1)^2M_{\ell-2}(\theta).$$	
\item For each $\ell\ge 1$, \begin{multline*}
2(\ell+1) M_{\ell}(\theta)=  { 4(-2\ell+1)\cos(2\theta)} M_{\ell-1}(\theta) 
	\\ +{(-2\ell+1) \sin(2\theta)}  M_{\ell-1}'(\theta)+
{2(\ell-1)^2(\ell-2)}M_{\ell-2}(\theta).
\end{multline*}
 \end{enumerate}
\end{lem}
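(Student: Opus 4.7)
The plan is to reduce both parts of the lemma to two standard identities for the Legendre polynomials: the three-term recurrence \eqref{leg}, and the classical differential identity
\[
(t^2-1)\,\mathbf{P}_\ell'(t) \;=\; \ell t\,\mathbf{P}_\ell(t) - \ell\,\mathbf{P}_{\ell-1}(t).
\]
Both identities are consequences of the definition of $\mathbf{P}_\ell$, so they can be taken as given.

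For part (1), I would substitute $t=-\cos 2\theta$ into \eqref{leg} to obtain
\[
\ell\,\mathbf{P}_\ell(-\cos 2\theta)
=-(2\ell-1)\cos(2\theta)\,\mathbf{P}_{\ell-1}(-\cos 2\theta)-(\ell-1)\,\mathbf{P}_{\ell-2}(-\cos 2\theta),
\]
multiply through by $(\ell-1)!$, and use the definition \eqref{mlt} of $M_j(\theta)$ together with $(\ell-1)\cdot(\ell-1)! = (\ell-1)^2\cdot(\ell-2)!$ to convert each term into $M_{\ell-1}$ or $M_{\ell-2}$. This directly yields part (1).

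For part (2), the key intermediate step is to prove
\[
\sin(2\theta)\,M_\ell'(\theta) \;=\; 2\ell\cos(2\theta)\,M_\ell(\theta) + 2\ell^2\,M_{\ell-1}(\theta).
\]
This follows from the differential identity above: setting $t=-\cos 2\theta$ gives $t^2-1=-\sin^2 2\theta$ and $dt/d\theta=2\sin 2\theta$, so $M_\ell'(\theta) = 2\sin(2\theta)\cdot\ell!\,\mathbf{P}_\ell'(-\cos 2\theta)$, and multiplying by $\sin 2\theta$ converts the left-hand side to $2\ell!\sin^2(2\theta)\,\mathbf{P}_\ell'(-\cos 2\theta)$, which the Legendre differential identity expresses as a linear combination of $\mathbf{P}_\ell$ and $\mathbf{P}_{\ell-1}$; rewriting in terms of $M_\ell$ and $M_{\ell-1}$ (using $\ell\cdot\ell!=\ell^2\cdot(\ell-1)!$) produces the displayed formula. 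Applying this identity with $\ell$ replaced by $\ell-1$ gives an expression for $\sin(2\theta)\,M_{\ell-1}'(\theta)$ as a linear combination of $M_{\ell-1}(\theta)$ and $M_{\ell-2}(\theta)$; substituting this into the right-hand side of (2), collecting terms, and using part (1) to rewrite the resulting combination as $2(\ell+1)M_\ell(\theta)$ finishes the proof. I do not anticipate any genuine obstacle: once the correct classical identity is identified, the remaining work is routine bookkeeping with the coefficients $a_\ell$ and $b_\ell$ from \eqref{abl}.
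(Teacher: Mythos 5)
Your proposal is correct and follows the approach the paper implicitly takes: the paper offers no proof, only the remark that the lemma follows from the Legendre recursion \eqref{leg}. Your argument supplies the missing details, including the observation that part (2) also requires the classical derivative identity $(t^2-1)\mathbf{P}_\ell'(t)=\ell t\,\mathbf{P}_\ell(t)-\ell\,\mathbf{P}_{\ell-1}(t)$ (not just \eqref{leg}), and the bookkeeping with $(\ell-1)(\ell-1)!=(\ell-1)^2(\ell-2)!$ and $\ell\cdot\ell!=\ell^2(\ell-1)!$ all checks out.
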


%We set $k_\theta=k_{0,0,\theta}$.
The following theorem implies in particular that the integral $\phi_{\ell}^N(a_y) $
converges, which is a priori unclear as $(N\cap \G)\ba N$ is not compact in general.
\begin{thm}\label{psi2}\label{ub2} 
	For $\ell\geq 0$, we have
	$$\phi_{\ell}^N(a_y) 
	= (-1)^{(n-2)\ell}  \cdot c_n(0)
\cdot \tfrac{\sqrt{\Gamma(n-1 -\delta)\Gamma(\ell+\delta)}}{\sqrt{\Gamma (\delta) 
\Gamma(\ell+n-1-\delta)}} \cdot   \sqrt{2(n-2)\ell+1} \cdot y^{n-1-\delta}.$$

In particular, 
 $$|\phi_\ell^N(a_y)| \ll \ell^{(n-2)/2} \cdot y^{n-1-\delta}$$
with the implied constant independent of $\ell \ge 1$.
\end{thm}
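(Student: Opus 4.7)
The plan is to prove the formula by induction on $\ell$, using the ladder-operator recursions of Section~\ref{rai} to reduce the problem to a scalar recursion for $\psi_\ell^N(a_y)$ as a function of $y$. The base case is Proposition~\ref{co}, which gives $\psi_0^N(a_y)=\phi_0^N(a_y)=c_n(0)y^{n-1-\delta}$. First, the pointwise bounds $|\phi_\ell(n_xa_y)|\le C_\ell\,\phi_0(n_xa_y)$ from Theorems~\ref{ul} and~\ref{bdd1}, combined with Proposition~\ref{co}, ensure that both $\phi_\ell^N(a_y)$ and $\psi_\ell^N(a_y)=\|\psi_\ell\|_2\,\phi_\ell^N(a_y)$ converge absolutely, so the integrals in question are well-defined.

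The key step is to turn the ladder recursion for $\psi_\ell$ into a differential recursion in $y$. For $n=3$, Theorem~\ref{zl2} gives $\psi_\ell=a_\ell\tilde H\psi_{\ell-1}+b_\ell\psi_{\ell-2}$; since $\tilde H=\tfrac12 H$ generates the one-parameter subgroup $\{a_{e^t}\}$, its right-regular action evaluated at $n_xa_y$ reduces to $y\partial_y$, so
\[
\psi_\ell(n_xa_y)=a_\ell\,y\partial_y\psi_{\ell-1}(n_xa_y)+b_\ell\,\psi_{\ell-2}(n_xa_y).
\]
For $n=2$, expressing $\mathcal R$ in Iwasawa coordinates at $\theta=0$ and using the $K$-weight identity $\psi_{\ell-1}(gk_\theta)=e^{2i(\ell-1)\theta}\psi_{\ell-1}(g)$ yields
\[
\psi_\ell(n_xa_y)=iy\partial_x\psi_{\ell-1}(n_xa_y)+y\partial_y\psi_{\ell-1}(n_xa_y)+(\ell-1)\psi_{\ell-1}(n_xa_y).
\]
Integrating over $(N\cap\Gamma)\ba N$, the $\partial_x$-term vanishes by Lemma~\ref{zero}, and exchanging $\partial_y$ with the $N$-integral produces scalar recursions for $\psi_\ell^N(a_y)$.

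The ansatz $\psi_\ell^N(a_y)=C_\ell\,y^{n-1-\delta}$ is preserved under these recursions and reduces them to purely scalar recursions for $C_\ell$. For $n=2$ one obtains $C_\ell=(\ell-\delta)C_{\ell-1}$, so $C_\ell=c_2(0)\,\Gamma(\ell+1-\delta)/\Gamma(1-\delta)$. For $n=3$, a direct induction using $a_\ell=-2\ell+1$ and $b_\ell=(\ell-1)^2(\ell-\delta)(\ell+\delta-2)$ yields $C_\ell=(-1)^\ell c_3(0)\,\ell!\,\Gamma(\ell+2-\delta)/\Gamma(2-\delta)$. Dividing by $\|\psi_\ell\|_2$ (from \eqref{bks} for $n=2$ and Lemma~\ref{vbdd} for $n=3$) collapses the Gamma factors into the claimed closed form, and the asymptotic $|\phi_\ell^N(a_y)|\ll\ell^{(n-2)/2}y^{n-1-\delta}$ follows from Stirling's approximation applied to the resulting ratio of Gamma functions.

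The main obstacle is justifying the interchange of $\partial_y$ with the $N$-integration, since $(N\cap\Gamma)\ba N$ need not be compact. When $\infty$ is a parabolic fixed point of maximal rank $n-1$ the quotient is compact and the exchange is automatic, and when $\infty\notin\Lambda(\Gamma)$ one has the fast decay used in the proof of Proposition~\ref{co}. The subtle case is $n=3$ with $\infty$ a rank-one cusp, where one has to refine the cusp-decay estimates from \cite[Prop. 4.6]{KO} into uniform integrable bounds for $y\partial_y\psi_{\ell-1}(n_xa_y)$ in a neighborhood of each $y$; this should be feasible using the explicit integral representation of $\psi_\ell$ in Lemma~\ref{eqqq} together with differentiation under the integral against $\nu_j$.
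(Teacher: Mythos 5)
Your proof is correct and follows essentially the same strategy as the paper: establish absolute convergence from the pointwise bounds and Proposition~\ref{co}, run the ladder recursion of Theorem~\ref{zl2}, kill the $x$-derivative terms via Lemma~\ref{zero}, solve the resulting one-dimensional recursion for $\psi_\ell^N(a_y)$, and normalize by $\|\psi_\ell\|_2$ from~\eqref{bks} or Lemma~\ref{vbdd}. The one genuine streamlining is in the $n=3$ case: the paper carries the full $\theta$-dependence of $\psi_\ell^N(a_yk)$ through the induction, which forces it to identify and manipulate $M_\ell(\theta)=\ell!\,{\bf P}_\ell(-\cos 2\theta)$ via Lemma~\ref{ml}; you instead set $\theta=0$ at the outset, using that every term of $\tilde H$ in~\eqref{ht} other than $y\partial_y$ carries a $\sin(2\theta)$ factor. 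This collapses the recursion to the purely scalar $C_\ell=a_\ell(2-\delta)C_{\ell-1}+b_\ell C_{\ell-2}$, which you correctly note admits the first-order integral $C_\ell=-\ell(\ell+1-\delta)C_{\ell-1}$ (I verified this, using $b_\ell=(\ell-1)^2(\ell-\delta)(\ell+\delta-2)$), and hence the stated closed form. The paper's extra work buys the formula for $\phi_\ell^N$ on all of $AK$, but that more general statement is not used elsewhere, so your shortcut loses nothing. Two small cautions: your remark that the exchange of $\partial_y$ with the $N$-integral is ``automatic'' outside the rank-one case overstates things, since when $\infty\notin\Lambda(\G)$ the quotient $(N\cap\G)\backslash N=N$ is still non-compact and one needs the domination $|\hat\phi_u^{(\ell)}|\le C_\ell\hat\phi_u$ from Theorem~\ref{bdd} together with $\phi_0^N(a_y)<\infty$ in every case; and the paper performs the same interchange without comment, so this is a shared (and justifiable) gap rather than a new one you have introduced.
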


\begin{proof}
By  Theorems \ref{ub} and \ref{cr},  
	we have \begin{align*}
	\left|\int_{(N\cap \Gamma)\bsl N} \phi_{\ell} (n_xa_y)\; dx\right| &\leq
 \int_{(N\cap \Gamma)\bsl N} \left|\phi_{ \ell}(n_xa_y)\right|\; dx
\\	&\ll_{\ell}  \int_{(N\cap \Gamma)\bsl N} \phi_0(n_xa_y)\; dx .
	\end{align*}

Hence by Proposition \ref{co}, the integral $\phi_{\ell}^N (a_y)$ converges absolutely. 

Let $n=2$. 
By \eqref{bks}, it suffices to show
	\begin{equation} \label{ind1} \psi_\ell^{N} (a_yk_\theta)
 = e^{2 \ell i\theta} c_2(0)\tfrac{\Gamma(\ell+1-\delta)}{\Gamma(1-\delta)}y^{1-\delta} .\end{equation}
The case $\ell=0$ holds by Proposition \ref{co}.
To use an induction, we assume \eqref{ind1} is true for $\ell$.
 Then  applying Lemma \ref{zero},
	\begin{align*}
	\psi_{\ell+1}^{N}(a_yk_\theta) = &e^{2i\theta} \int_{(N\cap \Gamma)\bsl N}
 \left( iy \tfrac{\partial }{\partial x}+y\tfrac{\partial }{\partial y} \tfrac{1}{2i}\tfrac{\partial }{\partial \theta}\right)
\psi_{\ell}(n_x a_yk_\theta)\; dx\;
	\\&= e^{2i\theta} \cdot \left(y\tfrac{\partial }{\partial y} \tfrac{1}{2i}\tfrac{\partial}{\partial \theta}\right)
\psi_\ell^{N}(a_yk_\theta)
	\\& =e^{ 2i\theta}\cdot
\left(y\tfrac{\partial }{\partial y} \tfrac{1}{2i}\tfrac{\partial}{\partial \theta}\right)
\left( e^{2\ell i\theta} c_2(0) \tfrac{\Gamma(\ell +1-\delta)}{\Gamma(1-\delta)}y^{1-\delta} \right)
	\\& = e^{ 2(\ell+1)i\theta}  \cdot 
\left(c_2(0) \cdot \tfrac{ \Gamma(\ell +1-\delta)}{\Gamma (1-\delta)}\cdot \left((1-\delta) + \ell\right) y^{1-\delta} \right)
	\\& = e^{ 2(\ell+1)i\theta} \cdot \left(c_2(0) \cdot \tfrac{\Gamma (\ell +1+(1-\delta))}{\Gamma (1-\delta)} y^{1-\delta}  \right) \end{align*}
since $z\Gamma(z) =\Gamma(z+1)$.  This proves \eqref{ind1} for $\ell+1$.

Let $n=3$. 
Setting $q_\ell:=c_3(0)\prod_{j=1}^{\ell} (j+1-\delta)$, we claim 
that \begin{equation}\label{pz}\psi_{\ell}^N(a_yk) = q_\ell\cdot  y^{2-\delta}M_\ell(\theta)\end{equation}
where $M_\ell(\theta)$ is defined as in \eqref{mlt}.
Set $k= k_{\mu_1,\mu_2,\theta}$, $ a_\ell = -2\ell+1 $ and $b_\ell = 
(\ell-1)^2(\delta(2-\delta)+\ell(\ell-2))$ for simplicity.

Since $\phi_0$ is fixed by $K$, $E^{\pm}(\phi_0)=0$. Hence
	\begin{align*}
	\psi_1^{N}(a_yk) 
	=a_1\int_{(N\cap \Gamma)\bsl N} \tilde H(\phi_0)(na_yk)\; dn.
	\end{align*}

By Proposition \ref{co} and Lemma \ref{zero}, using $\frac{\partial}{\partial \theta}\phi_0(n_xa_yk)=0$ and \eqref{ht},
we have
	\begin{align*}
	\psi_1^{N}(a_yk) &=a_1\left(\cos(2\theta)\; y\tfrac{\partial}{\partial y} 
	+\tfrac{\sin(2\theta)}{2}\;\tfrac{\partial}{\partial \theta}\right)\psi_0^{N}(a_yk)\\
	&=a_1c_3(0)\cdot\cos(2\theta)(2-\delta)y^{2-\delta}. 
	\end{align*}
Hence \eqref{pz} holds for $\ell=1$.

Assuming that \eqref{pz} holds for $\ell$,
we deduce using Lemma \ref{ml} that
	\begin{align*}
	&\psi_{\ell+1}^N(a_yk) = a_{\ell+1}q_\ell \tilde H(y^{2-\delta}M_\ell(\theta)) +b_{\ell+1}q_{\ell-1}y^{2-\delta}M_{\ell-1}(\theta)\\
	&\quad =(-2\ell-1)q_\ell (\cos(2\theta)(2-\delta)M_\ell(\theta) + \tfrac{\sin(2\theta)}{2}
M'_\ell(\theta))y^{2-\delta} \\
	&\quad\quad+ \ell^2(\ell+1-\delta)q_{\ell-1}(\delta+\ell-1)
y^{2-\delta}M_{\ell-1}(\theta).
	\end{align*}
By Lemma \ref{ml},
$$M_{\ell+1}(\theta)=(-2\ell-1)\cos(2\theta)M_\ell(\theta) -\ell^2M_{\ell-1}(\theta)$$
and
\begin{multline*}
	(\ell+2)M_{\ell+1}(\theta)=
2(-2\ell-1)\cos(2\theta)M_{\ell}(\theta)+(-2\ell-1)\tfrac{\sin(2\theta)}{2}M_\ell'(\theta)\\
+\ell^2(\ell-1)M_{\ell-1}(\theta).
\end{multline*}
Therefore we have $$ \psi_{\ell+1}^{N}(a_yk) =q_\ell (\ell+2-\delta) M_{\ell+1}(\theta) y^{2-\delta}=q_{\ell+1} M_{\ell+1}(\theta)y^{2-\delta},$$
proving \eqref{pz} for $\ell+1$.

By the formula of $\|\psi_{\ell}\|_2$ given in Lemma \ref{vbdd}, and using
 $|{\bf P}_\ell(t)|\le 1$ for all $t\in [-1,1]$
and ${\bf P}_\ell(-1)=(-1)^{\ell}$, we finish the proof for $n=3$.
 \end{proof}

\section{Uniform thickening for $\phi_\ell$'s}\label{th}
We continue the notations $G=NAK$, $\G$, $\phi_\ell$, etc. from section \ref{s:h}.
Assume that $\G$ is geometrically finite with $\tfrac{n-1}{2}<\delta< n-1$ and
that $\G\ba \G N$ is closed in $\G\ba G$.
In this section, we approximate the integral $\phi_\ell^N(a_y)$
as the inner product $\la a_y \phi_\ell, \rho_{\eta, \e}\ra=\int_{\G\ba G}\phi_\ell (ga_y) \rho_{\eta, \e}(g)dg$
for a suitable test function $\rho_{\eta,\e}$.
This step enables us to relate the horospherical average $\phi_{\ell}^N(a_y)$ with the spectral
decomposition of $L^2(\G\ba G)$ stated in Theorem \ref{lp}.

Fix a fundamental domain $F_0$ for $N\cap \G$ in $\br^{n-1}$ 
and choose a compact
fundamental domain ${ F}_\Lambda\subset F_0$ for $(N\cap \G)\ba \Lambda(\G)- \{\infty\} $
given by Lemma \ref{bow}.

\begin{lem}\label{es} Fix $\ell \ge 0$. Suppose $\G\ba \G N$ is not compact.
 For any open subset $J\subset F_0 $ containing ${\mathcal F}_\Lambda$,  we have for all $0<y <1$,
\begin{enumerate}
\item $ \int_{J^c} \phi_0(n_x a_y)\; dx \ll y^{\delta}$ with the implied constant independent of $\ell$;
\item $ |\int_{F_0-J} \phi_{ \ell}(n_x a_y)\; dx|\ll (\ell+1)^{(n-2)/2} y^\delta$
with the implied constant independent of $\ell$.
\end{enumerate}
\end{lem}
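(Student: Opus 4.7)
The plan is to reduce part (2) to part (1) using the already-established pointwise bounds, and then prove part (1) by a Fubini argument on the Poisson integral representation of $\phi_0$, reducing to a tail estimate analogous to the one proved inside Proposition \ref{co} above.

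\textbf{Reduction of (2) to (1).} Theorems \ref{ul} and \ref{cr} give uniformly in $x,y$ the pointwise bound $|\phi_\ell(n_xa_y)| \ll (\ell+1)^{(n-2)/2}\,\phi_0(n_xa_y)$ (with implied constant independent of $\ell$): the factor is $1$ for $n=2$ and $\sqrt{\ell+1}$ for $n=3$. Integrating this inequality over $F_0\setminus J$ and applying (1) produces exactly the bound claimed in (2). So the entire substance of the lemma is in (1).

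\textbf{Plan for (1).} Writing the Sullivan formula and swapping the order of integration,
\begin{equation*}
\int_{F_0\setminus J}\phi_0(n_xa_y)\,dx
\;=\; y^{\delta}\int_{\Lambda(\G)}(1+|u|^2)^{\delta}
\Bigl(\int_{F_0\setminus J}\frac{dx}{(|x-u|^2+y^2)^{\delta}}\Bigr)\,d\nu_j(u).
\end{equation*}
Since $J$ is open in $F_0$ and contains the compact set $\mathcal F_\Lambda$, there is $\epsilon>0$ such that $|x-u|\ge\epsilon$ whenever $x\in F_0\setminus J$ and $u\in\mathcal F_\Lambda$. Decomposing $\Lambda(\G)-\{\infty\}=\bigsqcup_{\gamma\in N\cap\G}\gamma(\mathcal F_\Lambda)$ (available by Lemma \ref{bow}) and using $N\cap\G$-invariance of Lebesgue measure on $\br^{n-1}$ together with the translation-equivariance of $\nu_j$ under $N\cap\G$, one reduces to controlling the inner integral for $u$ in $\mathcal F_\Lambda$ together with a sum over $(N\cap\G)$-translates. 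For such $u$ and $0<y<1$ the factor $y^{2\delta}$ in the denominator is dominated by $|x-u|^{2\delta}\ge\epsilon^{2\delta}$, so no additional power of $y$ can be absorbed and the prefactor $y^{\delta}$ remains intact.

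\textbf{Main obstacle.} The delicate point is the case when $\infty$ is a bounded parabolic fixed point of rank $k<n-1$, so that $F_0$ is unbounded and the union $\bigsqcup_{\gamma\in N\cap\G}\gamma(\mathcal F_\Lambda)$ accumulates only in the $k$ periodic directions while leaving the transverse directions in $F_0\setminus J$ genuinely non-compact. Here one must show that the iterated sum/integral over the translates (weighted by $(1+|u|^2)^{\delta}$) converges and stays uniformly bounded in $y$. This is precisely the mechanism underlying the vanishing of $d_n(0)$ in the proof of Proposition \ref{co}; the quantitative version is exactly the estimate $\int_{F_0-B_t}\phi_0(n_xa_y)\,dx\ll y^\delta$ of \cite[Prop.~4.6]{KO}, which we invoke (or reprove by the same cusp computation) with $B_t$ replaced by the open neighborhood $J$ of $\mathcal F_\Lambda$.
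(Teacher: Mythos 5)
Your reduction of (2) to (1) via the pointwise bounds from Theorems \ref{ul} and \ref{cr} is exactly what the paper does. Your treatment of (1) takes the same broad route (Fubini on the Sullivan integral plus a tail estimate, with the hard cusp case delegated to \cite{KO}), but a few details deserve tightening. First, the phrase ``translation-equivariance of $\nu_j$ under $N\cap\G$'' is not correct as stated: $\nu_j$ is a $\delta$-conformal density, so translates carry the Jacobian factor $e^{-\delta\beta_\xi(\gamma^{-1}j,\,j)}$. What \emph{is} $(N\cap\G)$-invariant is the weighted measure $d\mu^{\PS}_N(n_x)=(1+|x|^2)^\delta\,d\nu_j(x)$, which is exactly the combination that appears after Fubini; the argument should be phrased in terms of that. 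Second, in the cuspidal (parabolic rank $k<n-1$) case the naive bound $|x-u|\ge\epsilon$, while in fact uniform over all $u\in\Lambda(\G)-\{\infty\}$ and $x\in\mathrm{int}(F_0)\setminus J$, only yields $\int_{F_0\setminus J}(|x-u|^2+y^2)^{-\delta}\,dx=O(1)$ independent of $u$, and integrating this constant against $(1+|u|^2)^\delta\,d\nu_j(u)$ over all of $\Lambda(\G)-\{\infty\}$ gives $\mu^{\PS}_N(N)=\infty$ when $N\cap\G\ne\{e\}$. So the Fubini plus distance-bound argument, by itself, does not close; you need the decay of the inner $x$-integral in $|u|$, which is precisely the finer cusp computation in \cite{KO}. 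You flag this as ``delicate'' and invoke \cite[Prop.~4.6]{KO}, which is acceptable, but note the phrase ``with $B_t$ replaced by $J$'' is not a literal substitution: that estimate bounds $\int_{F_0-B_t}$, and one has to combine it for a fixed large $t$ with a bound on the bounded piece $(F_0-J)\cap B_t$ to obtain the lemma. For the record, the paper itself dispatches $n=2$ directly (the non-compactness assumption forces $\infty\notin\Lambda(\G)$ in that case, so $\Lambda(\G)$ is bounded, and an incomplete Beta function computation after the change of variables $w=(x-u)/y$ yields $y^{\delta}$), and cites \cite[Prop.~3.7]{KO} for $n=3$; so both you and the paper ultimately rely on \cite{KO} for the nontrivial content of part~(1).
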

\begin{proof}
Let $n=2$. Then the non-compactness assumption on $\G\ba \G N$ implies that
$\infty\notin \Lambda(\G)$ and hence
 $\e_0:=\inf_{x\notin J, u\in \Lambda(\G)} |x-u|$ is positive.
Then by the change of variable $w=\frac{x-u}y$, 
 we have

\begin{align*} \int_{F_0-J} \phi_0(n_x a_y)\; dx &\le  2 y^{1-\delta}\int_{u\in \Lambda(\G)}(u^2+1)^\delta d\nu_i(u)
\cdot \int_{w=\e_0/ y}^\infty \left( \tfrac{1}{w^2+1}\right)^\delta dw .\end{align*}

The latter integral can be evaluated explicitly as an incomplete Beta function which has known asymptotics:
$$
\int_{\e_0/y}^{\infty} \left( \tfrac{1}{w^2+1}\right)^{\delta} dw = c\ \beta_{y^2 / \e_0^2}( \delta-1/2, 1-\delta),
$$
where 
$\beta_z(\alpha,\beta)\ll z^{\alpha}.$
Hence
$$ \int_{F_0-J} \phi_0(n_x a_y)\; dx \ll y^{1-\delta}\cdot  y^{2(\delta-1/2)}=y^\delta .$$
This proves (1) for $n=2$, and we refer
to \cite[Proposition 3.7]{KO} for $n=3$.

For the second claim, note that, by Theorem \ref{exp1},
	\begin{align*} \left|\int_{x\in F_0-J} \phi_\ell(n_xa_y)\; dx\right|
	&\leq \int_{x\in F_0-J} \left|\phi_\ell(n_x a_y)\right|\; dx
	 \\ & \ll  (\ell +1)^{(n-2)/2} \int_{F_0-J} \phi_0(n_x a_y)\; dx \end{align*}
since $\phi_0$ is a positive function. 
Hence the claim (2) follows from (1).
% for $k_{\mu_1, \mu_2, \theta}=e$.
%The general case follows by Corollary \ref{ub2} since $|{\bf P}_\ell (-\cos 2\theta)|\le 1$ for all $\theta$ and$\ell\ge 0$.
\end{proof}

For $m\ge 0$, the associated Legendre function  ${\bf P}_\ell^m(x)$ is defined by the following:
 \begin{equation}\label{aleg} {\bf P}_\ell^m(x)=(-1)^m(1-x^2)^{m/2}\tfrac{d^m}{dx^m} ({\bf P}_\ell(x)). \end{equation}
We set ${\bf P}_\ell^{-m}(x):=(-1)^m \tfrac{(\ell-m)!}{(\ell+m)!}{\bf P}_\ell^m(x)$.

 Let $U_\e$ denote the $\e$-neighborhood of $e$ in $G$ for any $\e>0$.
The following lemma controls the Lipschitz constants of $\phi_\ell$'s for the action of $K$:
 \begin{lem} \label{Le} Let $\ell \ge 0$.
 For all sufficiently small $\e>0$, 
$$\phi_\ell(n_xa_y k )= (1+O((\ell+1)^{n-1} \e)) \cdot  \phi_\ell(n_xa_y ) $$
with the implied constant independent of $\ell$, $x$, $y>0$ and $k\in K_\e:=K\cap U_\e$.
\end{lem}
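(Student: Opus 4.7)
The plan is to split on the dimension $n$ and exploit the explicit formulas for $\phi_\ell$ established earlier. For $n=2$, the $K$-type $V_\ell$ is one-dimensional, so Theorem~\ref{ul} specializes at $k=k_\theta$ to the scalar identity $\phi_\ell(n_xa_yk_\theta)=e^{2i\ell\theta}\phi_\ell(n_xa_y)$. For $k_\theta\in K_\e$ the angle satisfies $|\theta|=O(\e)$, hence $|e^{2i\ell\theta}-1|\le 2|\ell|\e$, yielding the lemma with constant $(\ell+1)^{n-1}=\ell+1$.

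For $n=3$ I would start from the explicit formula built in the proof of Theorem~\ref{bdd},
\[
\phi_\ell(n_xa_yk) \;=\; \|\psi_\ell\|_2^{-1}\int_{\Lambda(\G)}\hat\phi_u(x,y)\,p_\ell\bigl(\Phi_1(k),\Phi_2(k),\Psi(k);A_1,A_2,B\bigr)\,d\nu_j(u),
\]
where $p_\ell$ is the polynomial in the six variables $(\Phi_1,\Phi_2,\Psi,A_1,A_2,B)$ determined by $\hat\phi_u^{(\ell)}=\hat\phi_u\cdot p_\ell$ together with the recursion $\hat\phi_u^{(\ell)}=a_\ell\tilde H\hat\phi_u^{(\ell-1)}+b_\ell\hat\phi_u^{(\ell-2)}$. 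Only $(\Phi_1,\Phi_2,\Psi)$ depend on $k$; the other variables depend on $(x,y,u)$ and are bounded in modulus by $1$. At $k=e$ the triple $(\Phi_1,\Phi_2,\Psi)$ equals $(0,0,1)$, reducing the formula to the Legendre polynomial expression of Theorem~\ref{cr}; for $k\in K_\e$ the Lie coordinates $(\mu_1,\mu_2,\theta)$ are $O(\e)$, so $\Phi_i(k)=O(\e)$ and $\Psi(k)=1+O(\e^2)$.

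The core step is an inductive sensitivity estimate for $p_\ell$. Using the recursion together with the explicit action of $\tilde H=\Phi_1 y\partial_{x_1}+\Phi_2 y\partial_{x_2}+\Psi y\partial_y+\tfrac{\sin 2\theta}{2}\partial_\theta$ on each of the six variables (which in every case returns a polynomial in the same six bounded variables with coefficients independent of $\ell$; in particular $\tilde H\Phi_i=\Psi\Phi_i$ and $\tilde H\Psi=-\Phi_1^2-\Phi_2^2$), I would prove by induction on $\ell$ that $p_\ell$ has total degree $O(\ell)$ in $(\Phi_1,\Phi_2,\Psi)$ and satisfies
\[
\bigl|p_\ell(\Phi_1(k),\Phi_2(k),\Psi(k);A_1,A_2,B)-p_\ell(0,0,1;A_1,A_2,B)\bigr|\le C(\ell+1)^{n-1}\e\cdot\bigl|p_\ell(0,0,1;A_1,A_2,B)\bigr|
\]
uniformly in $(A_1,A_2,B)\in[-1,1]^3$. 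Integrating this pointwise estimate against the positive kernel $\hat\phi_u\,d\nu_j(u)$ then transports it to $\phi_\ell$.

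The hard part is this inductive estimate. The recursion coefficients grow as $a_\ell=O(\ell)$ and $b_\ell=O(\ell^4)$, and each application of $\tilde H$ both raises the degree and mixes new $\Phi_i,\Psi$ factors into the coefficients, so both the degree growth and the coefficient growth must be tracked simultaneously. The delicate point is keeping the estimate \emph{multiplicative} against $p_\ell(0,0,1;\cdot)$ rather than merely additive against $\phi_0$, since only the multiplicative form survives the integration as a clean factor on $\phi_\ell(n_xa_y)$; the exponent $n-1$ reflects that the leading $O(\e)$ contribution comes from turning on one of the two $\Phi_i$ coordinates in each of the $O(\ell)$ monomials of $p_\ell$.
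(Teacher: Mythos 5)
Your $n=2$ argument is exactly the paper's (the one-dimensional $K$-type gives $\phi_\ell(n_xa_yk_\theta)=e^{2i\ell\theta}\phi_\ell(n_xa_y)$ and $|e^{2i\ell\theta}-1|\ll(\ell+1)\e$). For $n=3$, however, your route through the polynomial $p_\ell$ of Theorem \ref{bdd} has a genuine gap at its core step. By the computation in Theorem \ref{cr}, $p_\ell(0,0,1;A_1,A_2,B)$ is a nonzero constant multiple of the Legendre polynomial ${\bf P}_\ell$ evaluated at $\pm B$, which vanishes at $\ell$ points of $(-1,1)$. At such a point your claimed inequality $\bigl|p_\ell(\Phi_1,\Phi_2,\Psi;A_1,A_2,B)-p_\ell(0,0,1;A_1,A_2,B)\bigr|\le C(\ell+1)^{2}\e\,\bigl|p_\ell(0,0,1;A_1,A_2,B)\bigr|$ has vanishing right-hand side but generically nonvanishing left-hand side: already for $\ell=1$ one has $p_1=a_1\delta(\Phi_1A_1+\Phi_2A_2+\Psi B)$, and at $B=0$, $A_1\neq0$, $\Phi_1\neq0$ the difference equals $|a_1\delta\,\Phi_1A_1|\neq0$. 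So the uniform multiplicative sensitivity estimate on $[-1,1]^3$ is false for every $\e>0$, and the induction you propose cannot establish it.

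Retreating to an additive bound does not rescue the plan: integrating an additive error against the positive kernel $\hat\phi_u\,d\nu_j$ produces an error proportional to $\phi_0(n_xa_y)$, not the stated multiplicative factor on $\phi_\ell(n_xa_y)$ (which can be much smaller, even zero); and even the additive version with a constant polynomial in $\ell$ is exactly what the recursion does not obviously give, since $a_\ell=O(\ell)$, $b_\ell=O(\ell^4)$ and naive bookkeeping of the coefficients of $p_\ell$ yields super-polynomial growth in $\ell$ --- this is precisely why Theorem \ref{bdd} only produces an unspecified $\ell$-dependent constant $C_\ell$, and why the paper remarks just before it that a bound $|\phi_\ell(n_xa_yk)|\ll\ell^N\phi_0(n_xa_y)$ off the $NA$-coset is not known. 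The paper's proof of Lemma \ref{Le} avoids the recursion entirely: since $\mathcal C_K\phi_\ell=\ell(\ell+1)\phi_\ell$, the function $k_{\mu_1,\mu_2,\theta}\mapsto\phi_\ell(n_xa_yk_{\mu_1,\mu_2,\theta})$ expands only in spherical harmonics $Y_\ell^m(\theta,\mu)$ of degree $\ell$, $|m|\le\ell$, and for $|\theta|,|\mu|<\e$ the factors $e^{im\mu}$ and ${\bf P}_\ell^m(\cos2\theta)$ are each $1+O(\ell\e)$ times their values at the identity, which gives the factor $1+O(\ell^2\e)$ simultaneously for all terms of the expansion. If you want to keep your algebraic framework, you would need to inject this representation-theoretic input (membership of $k\mapsto\phi_\ell(gk)$ in the single $K$-type $V_\ell$) or some equivalent source of cancellation uniform in $k$ near $e$; the induction on $p_\ell$ alone does not provide it.
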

\begin{proof} If $n=2$, we have
$\phi_\ell(n_xa_y k_\theta )=e^{2\ell i\theta}\phi_\ell(n_xa_y )$, and hence the claim follows easily as $e^{2\ell i\theta}=1+O((\ell+1) \theta)$
for all $\theta$ small.

Let $n=3$. We set $\mu=\mu_1+\mu_2$.
The operator $\mathcal {\mathcal C}_K$
acts on $M$-invariant functions as follows:
$$-\mathcal {\mathcal C}_K f(\mu, \theta)=
\tfrac{1}{\sin^2 2\theta}\tfrac{\partial^2}{\partial \mu^2}f+
\tfrac{1}{4\sin 2\theta}\tfrac{\partial}{\partial \theta}\left(\sin 2\theta 
\tfrac{\partial}{\partial \theta}f\right) .$$

Since $\mathcal {\mathcal C}_K(\phi_\ell) =  \ell(\ell+1)\phi_{\ell}$,
it follows from the theory of spherical harmonics (cf. \cite{SUG}) that
\begin{equation}\label{h}
\phi_\ell (n_xa_y k_{\mu_1,\mu_2,\theta}) = 
 \sum_{m=-\ell}^{\ell} f_{ \ell}^m (x, y) \cdot Y_\ell^m(\theta, \mu) \end{equation}
where $f_{\ell, m}\in C^\infty(\c \times
\br_{>0})$ and
$$Y_\ell^m(\theta, \mu)=
\tfrac{\sqrt{(2\ell +1 )(\ell-m)!}}{\sqrt{4\pi(\ell+m)!}} \cdot 
{\bf P}_\ell^m( \cos( 2\theta)) \cdot e^{i m\mu}.$$

We have for $|\mu|<\e$,
 $e^{i m\mu}=1+O(\ell \e)$ as $|m|\le \ell$. 
Also, from the properties of the associated Legendre functions, we deduce that for $|\theta|< \e$ and $|m|\le \ell$,  
$${\bf P}_\ell^m(\cos 2\theta)=(1+O(\ell\e )) {\bf P}_\ell^m(1)  .$$

Therefore for $|\theta|< \e$ and $ |\mu|<\e$,
$$ {\bf P}_\ell^m(\cos 2\theta) \cdot e^{i m\mu}=
(1+O(\ell^2 \e))  {\bf P}_\ell^m(1),$$
and hence
$$Y_\ell^m(\theta, \mu)=
(1+ O(\ell^2 \e)) Y_\ell^m(0,0).$$

It follows that for all $\ell \ge 1$,
\begin{align*} \phi_\ell (n_xa_y k_{\mu_1,\mu_2,\theta})&=(1+ O(\ell^2 \e))
\sum_{m=-\ell}^{\ell} f_{ \ell}^m (x, y) \cdot  Y_\ell^m(0,0) \\ & =(1+ O(\ell^2 \e)) \phi_\ell (n_xa_y ).\end{align*}
\end{proof}

Setting $N^-:=\{n_x^-:=\begin{pmatrix} 1 & 0 \\ x & 1\end{pmatrix} \}$ where $x$ ranges over $\br$ (resp. $\c$) for $n=2$ (resp. $n=3$),
the product map $$N\times A\times M\times N^-\to G$$ is a diffeomorphism at a neighborhood of $e$.
Let $dk$ be the invariant probability measure on $K$. 
For $g=n_xa_yk$, $dg= y^{-n} dxdydk $ defines a Haar measure on $G$.
 Let $\nu$ be a smooth measure on $AMN^-$ such that $dn_x\otimes d\nu = dg$.
Fix a bounded open domain $J\subset F_0$ which contains ${ F}_\Lambda$ and
choose a compactly supported smooth function $0\le \eta\le 1$ on $N$
with $\eta|_{J}=1$. 
If $\infty$ is of rank $n-1$, then set $J=F_0$; hence $\eta=1$ on $F_0$.

Fix $\e_0>0$ so that
the multiplication map
 $${\rm supp}(\eta)\times (U_{\epsilon_0} \cap AMN^-) \to 
{\rm supp}(\eta) \left(U_{\epsilon_0}\cap AMN^- \right) \subset \Gamma\bsl G$$
	is a bijection onto its image. 
	For each $0<\epsilon< \epsilon_0$, let $0\le r_\epsilon\le 1$ be a non-negative smooth function in 
$AMN^-$ whose support is contained in 
		$W_\epsilon:= (U_\epsilon\cap A) M (U_{\epsilon_0} \cap N^-) $ and 
		$\int_{W_\epsilon}r_\epsilon\; d\nu=1\;.$
		
	We define the function $\rho_{\eta, \epsilon}$ on $\Gamma\bsl G$ as follows: for $g=n_xa_ymn_{x'}^-$
\begin{equation*}\rho_{\eta, \epsilon}(g)= \begin{cases} \eta(n_x)\cdot r_\epsilon(a_ym n_{x'}^-)
&\text{ for $g\in {\rm supp}(\eta)W_\e$ }\\ 0 &\text{ for $g\notin
{\rm supp}(\eta)W_\e$}. \end{cases}\end{equation*}

\begin{prop}\label{l:approx_matrix_coeff}
For any $\ell\in \z_{\ge 0}$, we have for all $0<\e \ll 1$ and $y>0$,
	\begin{multline*}\phi_\ell^N(a_y) =\\ 
\left<a_y.\phi_\ell, \rho_{\eta, \epsilon}\right> + 
O( \e (\ell+1)^{\tfrac{n-2}2}  y^{n-1-\delta} + (\ell+1)^{\tfrac{3n-4}2}  y^{n-\delta} ) +O_\eta( (\ell +1)^{\tfrac{n-2}2} y^{\delta})  \end{multline*}
with the implied constants independent of $\ell$.
 \end{prop}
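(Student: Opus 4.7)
The plan is a thickening argument for horocycles, but the main difficulty is that $\phi_\ell$ is not $K$-invariant, so the transverse derivatives must be controlled uniformly in $\ell$. Since the product map $\op{supp}(\eta)\times W_\epsilon\to \Gamma\backslash G$ is a bijection onto its image and $dg=dn_x\,d\nu(h)$, unfolding gives
\begin{equation*}
\langle a_y.\phi_\ell,\rho_{\eta,\epsilon}\rangle
=\int_N\eta(n_x)\int_{W_\epsilon}r_\epsilon(h)\,\phi_\ell(n_x h a_y)\,d\nu(h)\,dx.
\end{equation*}
The aim is to approximate the inner integral by $\phi_\ell(n_x a_y)$ and then reduce the $x$-integral to $\phi_\ell^N(a_y)$.

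For $h=a_{y'}m n^-_{x'}\in W_\epsilon$, I would commute $h$ past $a_y$ using that $M$ normalizes $N^-$ via $mn^-_{x'}m^{-1}=n^-_{x''}$ with $|x''|=|x'|$, that $M$ centralizes $A$, and that $a_y^{-1}n^-_{x''}a_y=n^-_{yx''}$. Combined with the right $M$-invariance of $\phi_\ell$, these yield
\begin{equation*}
\phi_\ell(n_x h a_y)=\phi_\ell\bigl(n_x\,a_{yy'}\,n^-_{yx''}\bigr).
\end{equation*}
Telescoping the difference from $\phi_\ell(n_x a_y)$ through $\phi_\ell(n_x a_{yy'})$ splits it into an $A$-contribution of size $O(\epsilon)\cdot\sup|\tilde H.\phi_\ell|$ (since $a_{y'}\in U_\epsilon$) and an $N^-$-contribution of size $O(y)\cdot\sup|\xi.\phi_\ell|$ for $\xi\in\mathfrak n^-$ (since $|yx''|\le\epsilon_0 y$).

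The technical crux is uniform pointwise control of these transverse derivatives with polynomial-in-$\ell$ growth. Theorem~\ref{zl2} together with Lemma~\ref{vbdd} expresses $\tilde H.\phi_\ell$ as a linear combination of $\phi_{\ell+1}$ and $\phi_{\ell-1}$ with coefficients polynomial in $\ell$; an analogous Lie-algebraic computation using $R,L\in\mathfrak p_\c$ (or $E\pm F$ for $n=2$) writes any $\mathfrak n^-$-derivative of $\phi_\ell$ as a finite sum of $\phi_{\ell'}$'s with $|\ell'-\ell|\le 1$ and polynomial-in-$\ell$ coefficients. Inserting the bound $|\phi_{\ell'}(na_y)|\ll(\ell'+1)^{(n-2)/2}\phi_0(na_y)$ from Theorem~\ref{cr}, together with Theorem~\ref{bdd} and Lemma~\ref{Le} to absorb the $K$-perturbation coming from the $M N^-$-part of $W_\epsilon$, bounds the derivative error pointwise by a polynomial-in-$\ell$ multiple of $\phi_0(n_x a_y)$. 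Integrating in $x$ and using Proposition~\ref{co} to evaluate $\phi_0^N(a_y)=c_n(0)y^{n-1-\delta}$, the $A$-contribution becomes $O(\epsilon(\ell+1)^{(n-2)/2}y^{n-1-\delta})$, and the $N^-$-contribution, carrying the extra factor $y$ from $a_y^{-1}n^-_{x'}a_y=n^-_{yx'}$, becomes $O((\ell+1)^{(3n-4)/2}y^{n-\delta})$.

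Finally, after these approximations,
\begin{equation*}
\langle a_y.\phi_\ell,\rho_{\eta,\epsilon}\rangle = \int_N \eta(n_x)\,\phi_\ell(n_x a_y)\,dx + (\text{errors above}).
\end{equation*}
Since $\op{supp}(\eta)\subset F_0$ is a fundamental domain for $N\cap\Gamma$ and $\eta\equiv 1$ on $J\supset \mathcal F_\Lambda$, the remaining discrepancy
\begin{equation*}
\int_N \eta(n_x)\phi_\ell(n_x a_y)\,dx-\phi_\ell^N(a_y)=-\int_{F_0\setminus J}(1-\eta)\phi_\ell(n_x a_y)\,dx
\end{equation*}
is bounded by Lemma~\ref{es} as $O_\eta((\ell+1)^{(n-2)/2}y^\delta)$. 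The main obstacle is not the unfolding nor the tail estimate, but tracking the sharp polynomial exponents $(n-2)/2$ and $(3n-4)/2$ through the transverse derivative bounds; this requires combining the ladder identities of Section~\ref{rai} with the explicit pointwise formulas of Section~\ref{s:phl} so that every auxiliary $\phi_{\ell'}$ produced by a derivative carries only the permitted polynomial factor in $\ell$.
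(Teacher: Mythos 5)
Your scaffolding (unfolding $\left<a_y.\phi_\ell,\rho_{\eta,\e}\right>$, commuting $h$ past $a_y$, and using Lemma \ref{es} for the region where $\eta\ne 1$) matches the paper, but the central analytic step has a genuine gap. You propose to telescope via transverse derivatives and claim that any $\mathfrak n^-$-derivative of $\phi_\ell$ is a finite sum of $\phi_{\ell'}$'s with $|\ell'-\ell|\le 1$ and polynomial coefficients. For $n=3$ this is false: the generators of $\mathfrak n^-$ (and likewise $R,L,E^{\pm}$) carry nonzero $\op{Ad}(M)$-weights, so if $v$ is $M$-invariant then $\pi(X)v$ has nonzero $M$-weight and is orthogonal to $V^M$; hence $\pi(X)\phi_\ell$ is \emph{not} a combination of the $\phi_{\ell'}$'s at all (the ladder identity of Theorem \ref{zl2} lives entirely inside $V^M$ and only controls the $\tilde H$-direction). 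The paper supplies no pointwise bounds for these non-$M$-invariant weight vectors: Theorems \ref{ul} and \ref{cr} bound only $\phi_{\ell'}$ and only on $NA$, Theorem \ref{bdd} has constants $C_\ell$ whose $\ell$-dependence is explicitly unknown, and Lemma \ref{Le} controls $K$-perturbations of $\phi_\ell$ itself, not of its derivatives. So the bound $\sup|\xi.\phi_\ell|\ll \operatorname{poly}(\ell)\,\phi_0$ near $NA$, on which your $N^-$-error rests, is not available. A secondary problem: even granting your derivative bounds, your $A$-contribution would come out as $O(\e(\ell+1)^{1+(n-2)/2}y^{n-1-\delta})$, since $\tilde H\phi_\ell$ is a combination of $\phi_{\ell\pm1}$ with coefficients of size $\asymp\ell$; the stated exponent $(n-2)/2$ does not follow from a pointwise derivative estimate (it can be recovered by instead applying the exact identity $\phi_\ell^N(a_{y'})=c_n(\ell)\,(y')^{\,n-1-\delta}$ of Theorem \ref{ub2} at the shifted height), and the exponent $(3n-4)/2$ is asserted rather than derived.

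The paper's proof avoids transverse derivatives entirely. Writing $n h a_y=n\,a_{yy_0}\,n^-_{yx}\,m$, it decomposes the $O(y)$-small element $n^-_{yx}$ in $ANK$ coordinates, $n^-_{yx}=a_{y_1}n_{x_1}k_1$ with all factors within $O(y)$ of the identity, so that $nha_y=n\,n_{x_1yy_0y_1}\,a_{yy_0y_1}\,k_1m$. The $N$-component only shifts the argument of the smooth cutoff $\eta$ by $O(y^2)$; the $A$-component is handled exactly through $\phi_\ell^N(a_{yy_0y_1})=c_n(\ell)(yy_0y_1)^{n-1-\delta}$ with $|y_0-1|=O(\e)$, $|y_1-1|=O(y\e)$, which is exactly where the factor $\e(\ell+1)^{(n-2)/2}$ comes from; and the only derivative-type input is the $K$-Lipschitz bound of Lemma \ref{Le} applied to the $O(y)$-small $k_1$, giving the factor $1+O((\ell+1)^{n-1}y)$ and hence, multiplied by $c_n(\ell)\ll(\ell+1)^{(n-2)/2}$, the term $(\ell+1)^{(3n-4)/2}y^{n-\delta}$. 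To salvage your route you would need a new, polynomially $\ell$-uniform pointwise estimate for $R\phi_\ell$, $L\phi_\ell$, $E^{\pm}\phi_\ell$ near $NA$, which the paper neither states nor needs.
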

\begin{proof}
Let $h=a_{y_0} n^-_{x} m \in W_\e$.
Then for $n\in N$ and $y>0$,
we have $$nha_y =n a_{y y_0} n^-_{yx} m .$$

As the product map $A\times N\times K\to G$ is a diffeomorphism and
hence a bi-Lipschitz map in a neighborhood of $e$, there exists $q \ge 1$ such that
the $\e$-neighborhood of $e$ in $G$
is contained in the product  $A_{q \e}N_{q\e} K_{q\e}$ for all small $\e>0$.
Therefore we may write
$$n_{yx}^-=  a_{y_1} n_{x_1} k_1\in A_{q y \e_0}N_{q y\e_0} K_{qy \e_0}$$
and hence
\begin{multline*}
 nha_y= na_{yy_0y_1} n_{x_1}k_1 m \\= n(a_{yy_0y_1} n_{x_1}  a_{yy_0y_1}^{-1}) a_{yy_0y_1}    k_1 m=
n n_{x_1 yy_0y_1} a_{yy_0y_1}  k_1 m .
\end{multline*}
By
Lemma \ref{Le},
$$ \phi_\ell (n_x h a_y)=\phi_\ell (n_x n_{x_1 yy_0y_1} a_{yy_0y_1} )  (1+O((\ell+1)^{n-1} y )).$$
By Theorem \ref{ub2}, we may write
 $\phi_\ell^N(a_y)=c_n(\ell) y^{n-1-\delta}$ with $c_n(\ell)\ll (\ell+1)^{(n-2)/2}$.
Hence, using Lemma \ref{es},
\begin{align*} &\int_{(N\cap \G)\ba N} \phi_\ell (n_x h a_y) \cdot \eta(n_x)  dx\\
&
=(1+O((\ell+1)^{n-1} y)) \int_{N\cap \G\ba N} \phi_\ell (n_x n_{x_1 yy_0y_1} a_{yy_0y_1} )  \cdot \eta(n_x) dx.
\end{align*}

Since $x_1 yy_0y_1=1+O(y^2)$, 
\begin{align*} &\int_{(N\cap \G)\ba N} \phi_\ell (n_x h a_y) \cdot \eta(n_x)  dx\\
\\ 
&= (1+O((\ell+1)^{n-1} y)) \int_{N\cap \G\ba N} \phi_\ell (n_x  a_{yy_0y_1} )  \cdot (\eta(n_x)+O(y^2)) dx
 \\
&=  \int_{N\cap \G\ba N} \phi_\ell (n_x  a_{yy_0y_1} )  \cdot \eta(n_x) dx + O( (\ell+1)^{\tfrac{3n-4}2}
 y^{n-\delta} ) \\& + O_\eta(y^2+(\ell+1)^{n-1} y^3) (\ell+1)^{\tfrac{n-2}2} y^{n-1-\delta})
\\
 &= c_n({\ell}) y^{n-1-\delta} (1+O(\e)) + 
O((\ell+1)^{\tfrac{3n-4}2}  y^{n-\delta}) + O_\eta((\ell+1)^{\tfrac{n-2}2} (y^{n+1-\delta} +y^{\delta} )) 
\\
&= c_n({\ell})  y^{n-1-\delta} +O( \e (\ell+1)^{\tfrac{n-2}2}  y^{n-1-\delta} + (\ell+1)^{\tfrac{3n-4}2}  y^{n-\delta} ) +O_\eta((\ell+1)^{\tfrac{n-2}2}y^{\delta}) \end{align*}
 as $|y_0-1|=O(\e)$ and $|y_1-1|=O(y\e)$.

As $\int r_\e d\nu(h)=1$, we deduce from Lemma \ref{es} that
\begin{align*}
& \langle a_y\phi_\ell, \rho_{\eta, \e} \rangle =\int_{W_\e} r_\e(h)
\left(\int_{N\cap \G\ba N}\phi_\ell(n_xha_y) \eta(n_x)  \; dx\right)
\,d\nu(h) \\&
= c_n({\ell}) y^{n-1-\delta} +O( \e (\ell+1)^{\tfrac{n-2}2}  y^{n-1-\delta} + (\ell+1)^{\tfrac{3n-4}2}  y^{n-\delta} ) + O_\eta(  (\ell+1)^{\tfrac{n-2}2} y^{\delta}).
\end{align*}
Since $\phi_\ell^N(a_y)=c_n({\ell}) y^{n-1-\delta}$, this proves the claim.
\end{proof}

The above proposition implies:
\begin{cor}\label{ples} For any $\ell \ge 0$,
$$ |\left<a_y. \phi_\ell, \rho_{\eta, \epsilon}\right>|  \ll  ( \ell+1)^{\tfrac{3n-4}2}  y^{n-1-\delta} 
$$where the implied constant is independent of $\ell\ge 0$, $0<\e<1$ and $0<y<1$.
\end{cor}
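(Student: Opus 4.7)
The plan is to deduce this corollary directly from Proposition \ref{l:approx_matrix_coeff} combined with the main horospherical-average bound from Theorem \ref{ub2}, by comparing the sizes of the various error terms for $0<y<1$.

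First I would rearrange Proposition \ref{l:approx_matrix_coeff} to isolate the inner product:
\[
\langle a_y.\phi_\ell,\rho_{\eta,\epsilon}\rangle = \phi_\ell^N(a_y) + O\bigl(\epsilon(\ell+1)^{(n-2)/2} y^{n-1-\delta} + (\ell+1)^{(3n-4)/2} y^{n-\delta}\bigr) + O_\eta\bigl((\ell+1)^{(n-2)/2} y^{\delta}\bigr).
\]
By Theorem \ref{ub2}, the main term satisfies $|\phi_\ell^N(a_y)| \ll (\ell+1)^{(n-2)/2}\, y^{n-1-\delta}$.

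The key observation is that for $0<y<1$ the three error sources are all dominated by the target $(\ell+1)^{(3n-4)/2} y^{n-1-\delta}$. Indeed, the factor $y^{n-\delta}=y\cdot y^{n-1-\delta}\le y^{n-1-\delta}$ since $y<1$; so the middle term is bounded by $(\ell+1)^{(3n-4)/2}y^{n-1-\delta}$. The hypothesis $\delta>(n-1)/2$ yields $\delta>n-1-\delta$, hence $y^\delta\le y^{n-1-\delta}$ for $y<1$, handling the $O_\eta(\cdot)$ term, which moreover carries the smaller power $(\ell+1)^{(n-2)/2}\le (\ell+1)^{(3n-4)/2}$. The $\epsilon(\ell+1)^{(n-2)/2}y^{n-1-\delta}$ piece is $\le (\ell+1)^{(3n-4)/2}y^{n-1-\delta}$ since $\epsilon<1$ and $(n-2)/2\le (3n-4)/2$. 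The main term $\phi_\ell^N(a_y)$ itself carries the smaller exponent $(n-2)/2$ in $\ell$.

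Assembling these inequalities gives
\[
|\langle a_y.\phi_\ell,\rho_{\eta,\epsilon}\rangle|\ \ll\ (\ell+1)^{(3n-4)/2}\,y^{n-1-\delta},
\]
with the implied constant independent of $\ell\ge 0$, $0<\epsilon<1$, and $0<y<1$, as required. I do not anticipate an obstacle here: all the hard work is inside Proposition \ref{l:approx_matrix_coeff} (the uniform thickening and Lipschitz estimate Lemma \ref{Le}) and Theorem \ref{ub2} (the explicit evaluation of the horospherical integral). The corollary is a clean size-comparison statement, and the only thing to verify carefully is that each exponent of $y$ appearing in the error is at least $n-1-\delta$ when $y<1$, which is immediate from $\delta>(n-1)/2$.
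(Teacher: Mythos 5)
Your proof is correct and matches the paper's approach exactly: the paper states the corollary with only the remark ``The above proposition implies:'' and leaves the size comparison to the reader, which is precisely what you carried out. Your verification that each term in Proposition \ref{l:approx_matrix_coeff} is dominated by $(\ell+1)^{(3n-4)/2}y^{n-1-\delta}$ for $0<y<1$, $0<\epsilon<1$ (using $\delta>(n-1)/2$ to handle the $y^\delta$ term and $\delta<n-1$ implicitly for $y^{n-\delta}\le y^{n-1-\delta}$) is the intended argument.
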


\section{Equidistribution of a closed horosphere}
As before, let $n=2$ or $3$, and $G=\PSL_2(\br)$, $\PSL_2(\c)$ accordingly.
Let $\G<G$ be a torsion-free geometrically finite discrete subgroup with
$(n-1)/2<\delta<(n-1)$. We assume that $(N\cap \G)\ba N$ is closed. In this section,
we prove the main effective equidistribution theorem for $(N\cap \G)\ba Na_y$ as $y\to 0$.
%Let $\{Z_i: i \}$ be a basis of the Lie algebra of $G$.

Let $\{X_i \}$ be a basis of the Lie algebra of $G$.
For $\psi \in C^\infty (\G\ba G)\cap L^2(\G\ba G)^M$, we consider the following
$L^2$-Sobolev norm $\mathcal S_m(\psi )$:
$$  \mathcal S_m(\psi )=\sum\| X(\psi) \|_2  $$
where the sum is taken over all monomials $X$ in $X_i$'s of order at most $m$.
By Theorem \ref{lp}, we can fix $ (n-1)/2<s_1<\delta$ so that there is no eigenvalue of  $\Delta$
between $s_1(n-1-s_1)$ and $\delta (n-1-\delta)$ in $L^2(\G\ba \bH^n)$. 
That is, no complementary series representation of $G$ with parameter $s_1<s< \delta$ is contained
in $L^2(\G\ba G)$.

\begin{lem}\label{l:matrix_coeff}
For any $\psi_1,\psi_2\in L^2\left(\Gamma\bsl G\right)^M\cap C^\infty(\Gamma\bsl G)$
 and $0< y< 1$, we have
	$$\left<a_y.\psi_1, \psi_2\right> = \sum_{\ell \in \hat K} \left< \psi_1, \phi_\ell\right>\left<a_y.\phi_\ell, \psi_2\right> + 
O\left(y^{n-1-s_1} \cdot \mathcal S_{n-1}(\psi_1) \cdot \mathcal S_{n-1}(\psi_2) \right).$$
\end{lem}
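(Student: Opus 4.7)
The plan is to project $\psi_1$ onto the base complementary series $V=\mathcal H_\delta$ using Theorem \ref{lp} and treat the remainder with quantitative decay of matrix coefficients. Write $\psi_1 = \psi_1^V + \psi_1^\perp$ for the orthogonal decomposition against $V$. Since $V$ is $G$-invariant, the projection commutes with $M$, so $\psi_1^V$ is $M$-invariant. By Corollary \ref{zlf}, $\{\phi_\ell\}_{\ell\in\hat K}$ is an orthonormal basis of $V^M$, giving
$$\psi_1^V \;=\; \sum_{\ell\in\hat K} \la\psi_1,\phi_\ell\ra\,\phi_\ell.$$
Pairing against $a_y.\psi_2$ then produces precisely the main term in the lemma, so the task reduces to bounding $|\la a_y.\psi_1^\perp,\psi_2\ra|$ by $y^{n-1-s_1}\,\mathcal S_{n-1}(\psi_1)\,\mathcal S_{n-1}(\psi_2)$.

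By Theorem \ref{lp} together with the spectral gap $s_1<\delta$, the orthogonal complement $V^\perp$ is a finite direct sum of complementary series $\mathcal H_{s_i}$ with $(n-1)/2<s_i\le s_1$ together with a tempered summand $\mathcal W$. For $M$-invariant smooth vectors $v,w$ in $\mathcal H_s$ with $s\le s_1$, the standard quantitative decay of matrix coefficients on rank-one Lie groups, obtained from Harish-Chandra's asymptotic expansion of the spherical function $\varphi_s(a_y)\asymp y^{n-1-s}$ in the $KAK$ decomposition with $K$-derivatives absorbing the $K$-type growth, yields
$$|\la a_y.v,w\ra| \;\ll\; y^{n-1-s}\,\mathcal S_{n-1}(v)\,\mathcal S_{n-1}(w) \;\le\; y^{n-1-s_1}\,\mathcal S_{n-1}(v)\,\mathcal S_{n-1}(w)$$
for $0<y<1$. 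Tempered matrix coefficients decay like $y^{(n-1)/2}$, which is dominated by $y^{n-1-s_1}$ because $s_1>(n-1)/2$ forces $y^{(n-1)/2}\le y^{n-1-s_1}$ for $0<y<1$. Summing over the finitely many spectral summands, and noting that orthogonal projection onto a $G$-invariant subspace commutes with each differential operator $X_i$ and therefore contracts every Sobolev norm $\mathcal S_m$, completes the estimate.

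The delicate step is the quantitative matrix coefficient bound with Sobolev norm of order exactly $n-1$. This is a rank-one phenomenon: one $K$-decomposes $v$ and $w$ into their $V_\ell$-components, controls the individual pairings through the Harish-Chandra spherical expansion, and uses $K$-derivatives to tame the polynomial-in-$\ell$ growth; the one-dimensionality of each $V_\ell^M$ (part of the analysis of Section \ref{rai}) is what keeps the required Sobolev order as low as $n-1$.
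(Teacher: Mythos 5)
Your argument is correct and follows essentially the same route as the paper: decompose $L^2(\Gamma\backslash G)=\mathcal H_\delta\oplus V^\perp$, expand the projection of $\psi_1$ to $V^M$ in the orthonormal basis $\{\phi_\ell\}$ (Corollary \ref{zlf}) to get the main term, and bound $\la a_y.\psi_1^\perp,\psi_2\ra$ by $y^{n-1-s_1}\mathcal S_{n-1}(\psi_1)\mathcal S_{n-1}(\psi_2)$ using the spectral gap. The only difference is that you sketch the quantitative matrix-coefficient decay via Harish-Chandra expansions, whereas the paper simply quotes this bound from \cite[Prop.~3.3]{KO1} and \cite[Cor.~5.6]{KO}.
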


\begin{proof} 
	We write $L^2(\Gamma\bsl G) = V\oplus V^{\perp}$
	where $V=\mathcal H_\delta$ is the complementary series representation of $G$ of parameter $\delta$.
By Theorem \ref{lp} and the choice of $s_1$, the orthogonal complement $V^\perp$ does not contain any complementary series with parameter
 $s>s_1$. Since
$V^M=\oplus_{\ell \in \hat K} \c \phi_\ell$,
we can write 
	$$\psi_1 = \sum_{\ell \in \hat K} \left<\psi_1, \phi_\ell\right>\phi_\ell + \psi_1^{\perp}$$
with $\psi_1^{\perp}\in V^\perp$. Hence
	\begin{align*}
	\left<a_y.\psi_1, \psi_2\right> & = \sum_{\ell \in \hat K} \left<\psi_1, \phi_\ell\right>\left<a_y.\phi_\ell, \psi_2\right> +\left<a_y.\psi_1^{\perp}, \psi_2\right> .
	\end{align*}

On the other hand, since $V^\perp$ does not contain any complementary series with parameter
 $s>s_1$, we have
$$|\left< a_y. \psi_1^\perp ,\psi_2\right>| \ll  y^{n-1-s_1}\cdot  \mathcal S_{n-1}(\psi_1) \cdot \mathcal S_{n-1}(\psi_2)$$
(see \cite[Prop. 3.3]{KO1} and \cite[Cor. 5.6]{KO}). 
\end{proof}

We refer to \cite[Lem 6.5]{KO} for the next lemma: 
\begin{lem}\label{l:hat_psi}
For $\psi\in C_c^\infty(\Gamma\bsl G)$, there exists $\widehat \psi\in C_c^\infty(\Gamma\bsl G)$ such that
	\begin{enumerate}
	\item for all small $\epsilon>0$, $h\in U_\epsilon$, and $g\in \Gamma\bsl G$,
		$$\left|\psi(g)-\psi(gh)\right|\leq \epsilon\cdot\widehat \psi(g).$$
		
	\item for any $i\ge 1$, $\mathcal S_i(\widehat \psi)\ll \mathcal S_{2n-1}(\psi)$  with the implied constant depending only on ${\rm supp}(\psi)$.
	\end{enumerate}
\end{lem}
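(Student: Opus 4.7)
The plan is to construct $\widehat{\psi}$ as an explicit smooth cutoff, scaled by a constant proportional to a high Sobolev norm of $\psi$. The elementary input is the fundamental theorem of calculus applied along one-parameter subgroups: fixing a basis $\{X_1,\ldots,X_d\}$ of $\mathfrak g$ (where $d=\dim G\in\{3,6\}$ for $n\in\{2,3\}$), for any $h=\exp(X)\in U_\e$ one writes $X=\sum_i t_iX_i$ with $|t_i|\ll\e$ and expands
\begin{equation*}
\psi(gh)-\psi(g)=\int_0^1\sum_i t_i(X_i\psi)(g\exp(tX))\,dt,
\end{equation*}
which yields the pointwise bound
\begin{equation*}
|\psi(gh)-\psi(g)|\ll\e\sum_i\sup_{h'\in U_\e}|(X_i\psi)(gh')|.
\end{equation*}
The task therefore reduces to dominating this supremum by a smooth compactly supported function on $\G\bsl G$.

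Fix a small $\e_0>0$ and choose $\chi\in C_c^\infty(\G\bsl G)$ with $0\le\chi\le 1$, $\chi\equiv 1$ on an open neighborhood of $\op{supp}(\psi)\cdot\overline{U_{\e_0}}$, and with $\op{supp}(\chi)$ bounded depending only on $\op{supp}(\psi)$. Define
\begin{equation*}
\widehat{\psi}(g):=C_0\cdot\mathcal S_{2n-1}(\psi)\cdot\chi(g),
\end{equation*}
for an absolute constant $C_0$ to be pinned down. Then $\widehat{\psi}\in C_c^\infty(\G\bsl G)$ and $\mathcal S_i(\widehat{\psi})=C_0\,\mathcal S_i(\chi)\,\mathcal S_{2n-1}(\psi)$; since $\chi$ depends only on $\op{supp}(\psi)$, the factor $\mathcal S_i(\chi)$ is bounded by a constant depending only on $\op{supp}(\psi)$ and $i$, which verifies (2).

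For (1), let $\e<\e_0$ and $h\in U_\e$. If $g\notin\op{supp}(\psi)\cdot U_{\e_0}$ then $\psi(g)=\psi(gh)=0$ and the bound is trivial; otherwise $\chi(g)=1$. The key point is to control the supremum in the FTC estimate by $\mathcal S_{2n-1}(\psi)$, which I do via the pointwise $L^\infty$-Sobolev embedding: viewing $gU_{\e_0}$ as a Euclidean ball in $\R^d$ via a chart, for $s>d/2$ and any $f\in C^\infty$,
\begin{equation*}
\sup_{h'\in U_{\e_0}}|f(gh')|\ll_{\e_0,s}\sum_{|\alpha|\le s}\|D^\alpha f\|_{L^2(gU_{\e_0})}.
\end{equation*}
Applying this with $f=X_i\psi$ and $s=2n-2$, which satisfies $s>d/2$ for $n=2,3$, gives $\sup_{h'\in U_\e}|(X_i\psi)(gh')|\ll\mathcal S_{2n-1}(\psi)$. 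Substituting into the FTC bound produces $|\psi(gh)-\psi(g)|\ll\e\cdot\mathcal S_{2n-1}(\psi)=\e\widehat{\psi}(g)/C_0$, and choosing $C_0$ large enough absorbs the constant and yields (1).

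The only delicate point is the pointwise Sobolev embedding on the quotient $\G\bsl G$, but it is needed only on the compact set $\op{supp}(\chi)$, which is covered by finitely many chart images of Euclidean balls, so the classical Euclidean Sobolev embedding $H^s(B)\hookrightarrow C^0(\tfrac{1}{2}B)$ for $s>d/2$ applies directly. The index $2n-1$ in the statement is forced by the bookkeeping: one derivative from $X_i$ arising from the FTC, plus $2n-2$ further derivatives required for Sobolev embedding in dimension $\dim G\in\{3,6\}$.
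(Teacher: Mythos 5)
Your proof is correct, and the construction is clean: since the paper simply cites \cite{KO} for this lemma without reproducing the proof, I cannot compare to the paper's own argument directly, but your Sobolev-embedding approach is exactly the natural one that the exponent $2n-1$ is hinting at ($2n-1 = 1 + s$ with $s>\tfrac{1}{2}\dim G$ and $\dim G = 3,6$ for $n=2,3$), so it is almost certainly what the cited source does. The key idea you exploit, and which makes this work so cleanly, is that $\widehat\psi$ need not track $\psi$ locally at all: a single bump function, scaled by $\mathcal S_{2n-1}(\psi)$, suffices, because Sobolev embedding converts the pointwise bound required in (1) into an $L^2$-Sobolev estimate, and property (2) then becomes immediate with no extra derivatives lost. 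This is precisely what allows the lemma to be iterated $k$ times in the proof of Theorem 6.7 without Sobolev orders accumulating.

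A few minor technical points worth tightening, none of which affect correctness. First, $\epsilon_0$ must be taken small enough, depending on $\mathrm{supp}(\psi)$, so that balls $gU_{2\epsilon_0}$ centered in a neighborhood of $\mathrm{supp}(\psi)$ project injectively into $\Gamma\bsl G$ (uniform injectivity radius over the compact set); you tacitly use this when passing from $\|\cdot\|_{L^2(gU_{\epsilon_0})}$ to $\mathcal S_{2n-1}(\psi)$. Second, the interior Sobolev embedding $H^s(B)\hookrightarrow C^0(\tfrac12 B)$ controls the supremum only over a smaller ball, so one should run the embedding on $gU_{2\epsilon_0}$ to bound $\sup_{h'\in U_{\epsilon_0}}$; also $\exp(tX)$ for $t\in[0,1]$ lives in $U_{C\epsilon}$ for a fixed constant $C$, not $U_\epsilon$ exactly. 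Third, your $C_0$ is not absolute — it inherits dependence on $\epsilon_0$ and the finite chart cover of $\mathrm{supp}(\chi)$, hence on $\mathrm{supp}(\psi)$ — but this is exactly what the lemma permits. Finally, the implied constant in (2) also depends on $i$ through $\mathcal S_i(\chi)$; the lemma's phrase "depending only on $\mathrm{supp}(\psi)$" should be read as "independent of $\psi$ beyond its support," which your construction satisfies.
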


\medskip 

\begin{lem}\label{haha}
 For any $\psi\in C_c^\infty(\Gamma\bsl G)$,  $\ell \ge 0$, and $i\ge 1$, we have
$$ \left| \left<\psi, \phi_\ell\right> \right|\ll (\ell+1) ^{-2i} \|\mathcal {\mathcal C}_K^i \psi\|_2.$$
In particular, for each $i\ge 1$,
$$\sum_{\ell \in \hat K} ( |\ell| +1)^{2i}\cdot \left| \left<\psi, \phi_\ell\right> \right|\ll   \|\mathcal {\mathcal C}_K^{i+1} \psi\|_2. $$ 
\end{lem}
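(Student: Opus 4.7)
The plan is to exploit the fact that $\phi_\ell \in V_\ell^M$ is an eigenfunction of $\mathcal{C}_K$, with eigenvalue $\lambda_\ell$ satisfying $|\lambda_\ell| \gg (|\ell|+1)^2$ for $\ell \neq 0$. Concretely, $\mathcal{C}_K$ acts on $V_\ell$ as $-4\ell^2$ when $n=2$ and as $\ell(\ell+1)$ when $n=3$, so in both cases $|\lambda_\ell| \asymp (|\ell|+1)^2$ away from $\ell=0$. The key auxiliary fact is that $\mathcal{C}_K$, being a polynomial in skew-adjoint elements of $\mathfrak{k}$ that is $\mathrm{Ad}(K)$-invariant, acts as a formally self-adjoint operator on the smooth vectors of the unitary representation $L^2(\Gamma\backslash G)$.

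With these inputs, the first claim is immediate. For $\ell \geq 1$ and $\psi \in C_c^\infty(\Gamma \backslash G)$, self-adjointness of $\mathcal{C}_K$ lets us integrate by parts:
$$\lambda_\ell^i \langle \psi, \phi_\ell \rangle = \langle \psi, \mathcal{C}_K^i \phi_\ell \rangle = \langle \mathcal{C}_K^i \psi, \phi_\ell \rangle.$$
Applying Cauchy--Schwarz together with $\|\phi_\ell\|_2 = 1$ yields
$$|\langle\psi, \phi_\ell\rangle| = |\lambda_\ell|^{-i}\,|\langle \mathcal{C}_K^i \psi, \phi_\ell\rangle| \leq |\lambda_\ell|^{-i} \|\mathcal{C}_K^i \psi\|_2 \ll (\ell + 1)^{-2i}\|\mathcal{C}_K^i \psi\|_2.$$
The $\ell = 0$ case is trivial since $(\ell+1)^{-2i}=1$ and Cauchy--Schwarz gives $|\langle\psi,\phi_0\rangle| \leq \|\psi\|_2$ (which is absorbed into $\|\mathcal{C}_K^i\psi\|_2$ under the standard convention of including lower-order contributions).

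For the second claim, I would apply the first estimate with $i$ replaced by $i+1$ to extract an extra decay factor:
$$\sum_{\ell \in \hat{K},\,\ell\neq 0}(|\ell|+1)^{2i}|\langle\psi,\phi_\ell\rangle|
= \sum_{\ell\neq 0}(|\ell|+1)^{-2}\cdot (|\ell|+1)^{2(i+1)}|\langle\psi,\phi_\ell\rangle|
\ll \|\mathcal{C}_K^{i+1}\psi\|_2 \sum_{\ell\neq 0}(|\ell|+1)^{-2}.$$
The last geometric sum converges (whether $\hat{K}=\mathbb{Z}$ for $n=2$ or $\hat{K}=\mathbb{Z}_{\geq 0}$ for $n=3$), so this yields the desired bound modulo the $\ell = 0$ term, which is $|\langle\psi,\phi_0\rangle| \leq \|\psi\|_2$ and is again absorbed.

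The argument is essentially formal once the two ingredients are in hand. The only potential obstacle is the $\ell = 0$ term, where the eigenvalue mechanism degenerates and the estimate reduces to a trivial Cauchy--Schwarz bound; the statement implicitly treats $\|\mathcal{C}_K^{i+1}\psi\|_2$ as controlling $\|\psi\|_2$ as well, which is harmless for the intended use in the proof of Lemma~\ref{l:matrix_coeff}-type sums.
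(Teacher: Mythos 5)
Your argument is correct and is essentially the paper's own proof: you move the eigenvalue of $\mathcal{C}_K$ on $V_\ell$ (namely $-4\ell^2$ for $n=2$ and $\ell(\ell+1)$ for $n=3$) onto $\psi$ via (skew-)adjointness of $\mathcal{C}_K$ together with Cauchy--Schwarz and $\|\phi_\ell\|_2=1$, and you get the summed bound by applying the pointwise estimate with $i+1$ and using convergence of $\sum(|\ell|+1)^{-2}$, exactly as the paper does. Your remark on the degenerate $\ell=0$ term (where $\|\mathcal{C}_K^i\psi\|_2$ does not literally dominate $\|\psi\|_2$) is a fair observation that the paper silently glosses over, and it is indeed harmless because the lemma is only invoked through Sobolev norms $\mathcal{S}_m(\psi)$, which contain the lower-order term.
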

\begin{proof}
Recall that
the operator $\mathcal {\mathcal C}_K$ acts on each $V_\ell$ as a scalar $\alpha_\ell$, where
$\alpha_\ell =-4\ell^2$  for $n=2$ and and $\alpha_\ell=\ell(\ell+1)$ for $n=3$.
Moreover, for any smooth vectors $v,w$,
 $|\la \mathcal C_K v, w\ra|=|\la v, \mathcal C_K w\ra|$, as $\mathcal C_K$ is skew-adjoint for $n=2$ and
$\mC_K$ is adjoint for $n=3$.
Therefore
  $$|\left<\mathcal {\mathcal C}_K^i \psi, \phi_\ell\right>|=
|\left<\psi, \mathcal {\mathcal C}_K^i  \phi_\ell\right>|= |\alpha_\ell^i| \cdot | \left<\psi, \phi_\ell\right>|.$$
Hence  for all $i \ge 1$,
$$
|\left<\psi, \phi_\ell\right>|\le |\alpha_\ell |^{-i} \cdot  \|\mathcal {\mathcal C}_K^i \psi\|_2 .$$
\end{proof}

By Theorem \ref{ub2}, we may write
$$\phi_{\ell}^N(a_y) 
	= c_n(\ell) \cdot y^{n-1-\delta}$$
where \be \label{cnl} c_n(\ell)=(-1)^{(n-2)\ell}  \cdot c_n(0)
\cdot \tfrac{\sqrt{\Gamma(n-1 -\delta)\Gamma(\ell+\delta)}}{\sqrt{\Gamma (\delta) 
\Gamma(\ell+n-1-\delta)}} \cdot   \sqrt{2(n-2)\ell+1} .\ee

\begin{thm}\label{m1} 
Fix $0<\bold s_\G<\delta-s_1$. For any $\psi\in C_c^{\infty}(\Gamma\bsl G)^M$,
$$\psi^N(a_y) =\sum_{\ell \in \hat K}  c_n(\ell) \left<\psi, \phi_\ell\right> y^{n-1-\delta}
+\mathcal S_{2n-1}(\psi) O( y^{n-1-\delta+ \tfrac{2{\s}_\G}{(2n+1)}} ).$$
\end{thm}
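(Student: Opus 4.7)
The plan is to combine three tools already in place: the thickening identity of Proposition~\ref{l:approx_matrix_coeff} (for the $\phi_\ell$), the spectral expansion of matrix coefficients in Lemma~\ref{l:matrix_coeff}, and the explicit horospherical average $\phi_\ell^N(a_y)=c_n(\ell)y^{n-1-\delta}$ from Theorem~\ref{ub2}. First, I would establish a thickening identity for compactly supported $\psi$:
$$\psi^N(a_y)=\langle a_y.\psi,\rho_{\eta,\e}\rangle+O\bigl(\e\cdot\mathcal S_{2n-1}(\psi)\cdot y^{n-1-\delta}\bigr).$$
This is cleaner than Proposition~\ref{l:approx_matrix_coeff} because one can take $\eta\equiv 1$ on a neighborhood absorbing the horocycle trace of $\operatorname{supp}(\psi)$ modulo $\Gamma\cap N$, eliminating the boundary errors that arose for non-compactly-supported $\phi_\ell$. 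Writing $n_xha_y=n_xa_y(a_y^{-1}ha_y)$ and noting that $a_y^{-1}W_\e a_y\subset U_{C\e}$ for $y$ small (as $a_y^{-1}$ contracts $N^-$ by $y$ while preserving $AM$), Lemma~\ref{l:hat_psi}(1) gives $|\psi(n_xha_y)-\psi(n_xa_y)|\le C\e\,\widehat\psi(n_xa_y)$; integrating against $r_\e$ and $\eta$, combined with a crude bound on $\widehat\psi^N(a_y)$ controlled by $\mathcal S_{2n-1}(\psi)$ via Lemma~\ref{l:hat_psi}(2), produces the claim.

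Next I would apply Lemma~\ref{l:matrix_coeff} with $\psi_1=\psi$ and $\psi_2=\rho_{\eta,\e}$:
$$\langle a_y.\psi,\rho_{\eta,\e}\rangle = \sum_{\ell\in\hat K}\langle\psi,\phi_\ell\rangle\langle a_y.\phi_\ell,\rho_{\eta,\e}\rangle+O\bigl(y^{n-1-s_1}\mathcal S_{n-1}(\psi)\mathcal S_{n-1}(\rho_{\eta,\e})\bigr),$$
where a direct count yields $\mathcal S_{n-1}(\rho_{\eta,\e})\ll\e^{-(n-1/2)}$ because $r_\e$ is concentrated on an $\e$-scale in the $A$-direction only. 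Into each inner coefficient I substitute Proposition~\ref{l:approx_matrix_coeff}: $\langle a_y.\phi_\ell,\rho_{\eta,\e}\rangle = c_n(\ell)y^{n-1-\delta}+E_\ell(y,\e)$, with $E_\ell$ the three-term error supplied there. The main term $\sum_\ell c_n(\ell)\langle\psi,\phi_\ell\rangle y^{n-1-\delta}$ then appears, and the residual $\sum_\ell|\langle\psi,\phi_\ell\rangle|\,|E_\ell(y,\e)|$ is bounded by pairing Lemma~\ref{haha} with a Cauchy--Schwarz step: the polynomial growths $(\ell+1)^{(n-2)/2}$ and $(\ell+1)^{(3n-4)/2}$ in $E_\ell$ are tamed by the decay $\sum_\ell(|\ell|+1)^{2i}|\langle\psi,\phi_\ell\rangle|\ll\|\mathcal C_K^{i+1}\psi\|_2$, at the cost of a Sobolev norm of order at most $2n-1$.

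Finally, I would optimize $\e=y^\alpha$. The thickening error contributes a relative factor $y^\alpha$ while the spectral-gap error contributes $y^{\delta-s_1-\alpha(n-1/2)}$; the balancing equation $\alpha=(\delta-s_1)-\alpha(n-\tfrac12)$ gives the critical exponent $\alpha_\star=\tfrac{2(\delta-s_1)}{2n+1}$, and any choice $\alpha=\tfrac{2\mathbf s_\G}{2n+1}$ with $\mathbf s_\G<\delta-s_1$ yields the stated error. The remaining $\e$-independent pieces of $E_\ell$, namely $(\ell+1)^{(3n-4)/2}y^{n-\delta}$ and $(\ell+1)^{(n-2)/2}y^{\delta}$, contribute strictly smaller powers of $y$ and are absorbed. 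The main obstacle I anticipate is the tail bookkeeping in the previous step: ensuring that the polynomial-in-$\ell$ errors, once summed against $\langle\psi,\phi_\ell\rangle$, are controlled by $\mathcal S_{2n-1}(\psi)$ and not a larger Sobolev norm. The order $2n-1$ is essentially tight, particularly for $n=3$ where $(\ell+1)^{5/2}$ must be matched against Lemma~\ref{haha} through Cauchy--Schwarz with a spare $(\ell+1)^{-1-\varepsilon}$ weight and a careful integer choice of $i$.
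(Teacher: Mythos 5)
Your overall route is the right one---thicken, apply the spectral decomposition, substitute the explicit $\phi_\ell^N$, and optimize $\epsilon$---and you correctly identify the balancing $\e=y^{2(\delta-s_1)/(2n+1)}$, the use of Lemma~\ref{haha} to tame the polynomial-in-$\ell$ factors, and the role of Corollary~\ref{ples}. But the opening thickening step is where your proposal has a genuine gap, and it is in fact the principal difficulty the paper has to overcome, not the tail bookkeeping you flag at the end.

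Your claimed identity
$$\psi^N(a_y)=\langle a_y.\psi,\rho_{\eta,\e}\rangle+O\bigl(\e\cdot\mathcal S_{2n-1}(\psi)\cdot y^{n-1-\delta}\bigr)$$
presupposes the bound $I_\eta(\widehat\psi)(a_y)\ll\mathcal S_{2n-1}(\psi)\,y^{n-1-\delta}$, since by Lemma~\ref{l:hat_psi}(1) the raw thickening error is $O\bigl((\e+y)\,I_\eta(\widehat\psi)(a_y)\bigr)$. But $\widehat\psi$ is a generic compactly supported function, so $I_\eta(\widehat\psi)(a_y)\ll\mathcal S_{2n-1}(\psi)\,y^{n-1-\delta}$ is exactly the conclusion of Theorem~\ref{m1} applied to $\widehat\psi$. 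Lemma~\ref{l:hat_psi}(2) only controls Sobolev norms of $\widehat\psi$, not its horospherical averages; the ineffective Theorem~\ref{ros} gives the $y^{n-1-\delta}$ power but with a constant $m_N^{\mathrm{BR}}(\widehat\psi)$ that is not a priori dominated by $\mathcal S_{2n-1}(\psi)$. Compact support of $\psi$ does not rescue you here: as $y\to 0$ the horocycle sweeps through an expanding portion of any fixed compact set, and the correct $y^{n-1-\delta}$ scaling of the trace is precisely what needs to be proved. If you fall back on a crude $y$-independent bound on $\widehat\psi^N$, the resulting optimization produces a strictly weaker exponent (and only conditionally a power saving at all).

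The paper resolves this by iterating rather than asserting. Defining $\psi_0=\psi$ and $\psi_{i}=\widehat{\psi_{i-1}}$, one telescopes
$$I_\eta(\psi_i)(a_y)=\langle a_y.\psi_i,\rho_{\eta,\e}\rangle+O\bigl((\e+y)\,I_\eta(\psi_{i+1})(a_y)\bigr)$$
for $i=0,\dots,k-1$ and closes the final level $I_\eta(\psi_k)$ with a trivial bound. Choosing $k$ larger than $1+\tfrac{(n-1-\delta)(2n+1)}{2(\delta-s_1)}$ makes the residual $(\e+y)^k$ term subordinate to the optimized error $y^{n-1-\delta+2\mathbf{s}_\Gamma/(2n+1)}$. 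At each level $i<k$ one substitutes Lemma~\ref{l:matrix_coeff} and Corollary~\ref{ples} to get $\langle a_y.\psi_i,\rho_{\eta,\e}\rangle=O\bigl(\mathcal S_{2n-1}(\psi)\,(y^{n-1-\delta}+\e^{-(2n-1)/2}y^{n-1-s_1})\bigr)$ uniformly in $i$ (here Lemma~\ref{l:hat_psi}(2) is used, to keep the Sobolev order from growing with the iteration). Only at the top level $i=0$ does one peel off the genuine main term via Proposition~\ref{l:approx_matrix_coeff}. This telescoping device is the key idea missing from your sketch; everything else in your proposal, including the order $2n-1$ of the Sobolev norm and the exponent optimization, matches the paper's argument once the iteration is in place.
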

%: analogous to theorem 6.1
\begin{proof}  

Fix $\psi\in C_c^{\infty}(\Gamma\bsl G)^M$.
When $\infty$ has rank $n-1$, we set $J=F_0$.
In other cases, it was shown in \cite{KO} that
there exists a bounded open subset $J$ of $F_0$ such that
$\psi(n_xa_y)=0$ for all $x\in F_0-J$ and all $0<y<1$. 
We assume that $J$ contains ${ F}_\Lambda$, which is a bounded fundamental domain for the action of $(N\cap\G)$ in $\Lambda(\G)-\{\infty\}$. 

Choose a non-negative function $\eta\in C_c^\infty (N\cap \G \ba N)$ such that $\eta|_J=1$. Then
	$$I_\eta(\psi)(a_y): = \int_{(N\cap \Gamma)\bsl N} \psi(n_xa_y) \eta(n_x)\; dx =
  \psi^N(a_y)\;.$$

Let $\e_0, W_\e, r_\e, \rho_{\eta, \e}$ be as defined in section \ref{th}
with respect to  $J$ and $\eta$. 
Since $r_\e$ is the approximation of the identity in $A$ direction, $\mathcal S_{n-1}(\rho_{\eta, \e})=O_\eta(\e^{(-2n+1)/2})$.
 For any $0< y< 1$, and any small $\epsilon>0$, we have (see the proof of Prop. 6.6 in \cite{KO})
	\begin{equation}\label{l:aprox_matrix_coeff} \left|I_\eta(\psi)(a_y)-\left<a_y.\psi, \rho_{\eta, \epsilon}\right>\right| \ll (\epsilon+y)\cdot I_\eta(\widehat \psi)(a_y).\end{equation}

Setting $\psi_0(g) := \psi(g)$, we define for $1\leq i\leq k$, inductively
	$$\psi_i(g) :=\widehat\psi_{i-1}(g)$$
where $\widehat\psi_{i-1}$ is given by Lemma \ref{l:hat_psi}. 

Let $k$ be an integer bigger than $1+\tfrac{( n-1-\delta)(2n+1)}{2(\delta-s_1)} $. 

Applying Lemma \ref{l:hat_psi} to each $\psi_i$, we obtain for $0\leq i\leq k-1$, 
	\begin{align*}
	I_{\eta}(\psi_i)(a_y) &= \left<a_y.\psi_i, \rho_{\eta, \epsilon}\right> + 
O\left((\epsilon+y)\cdot I_\eta(\widehat\psi_i)(a_y)\right)
	\\&=\left<a_y.\psi_i, \rho_{\eta, \epsilon}\right> + O\left((\epsilon+y)\cdot I_\eta(\psi_{i+1})(a_y)\right)
	\end{align*} and
	$$I_\eta(\psi_k)(a_y) = \left<a_y. \psi_k, \rho_{\eta, \epsilon}\right> +
O_\eta\left((\epsilon+y)\mathcal S_{n-1}(\psi_k)\right).$$
 
By Lemma \ref{haha},
 $$|\la \psi_i, \phi_\ell\ra |=O(|\ell|+1)^{-(2n-2)} \mathcal S_{2n-2}(\psi_i)$$ 
and by Lemma \ref{l:hat_psi}, $\mathcal S_{j}(\psi_i)\ll \mathcal S_{2n-1}(\psi)$ for all $j\ge 1$.

Since $|\left<a_y. \phi_\ell, \rho_{\eta, \epsilon}\right>|  \ll   (|\ell| +1)^{(3n-4)/2}  y^{n-1-\delta} $
by Corollary \ref{ples}, 
we have
$$\sum_{\ell \in \hat K} \left<\psi_i, \phi_\ell\right>\left<a_y.\phi_\ell, \rho_{\eta, \epsilon}\right>=
 O (\mathcal S_{2n-1}(\psi_i) \cdot y^{n-1-\delta} ).$$

By Lemma \ref{l:matrix_coeff},
  we deduce that for each $1\leq i \leq k-1$, 
	\begin{align*}
	\left<a_y.\psi_i, \rho_{\eta, \epsilon}\right> &=
\sum_{\ell \in \hat K} \left<\psi_i, \phi_\ell\right>\left<a_y.\phi_\ell, \rho_{\eta, \epsilon}\right> +
 O\left(y^{n-1-s_1}\cdot\mathcal S_{n-1}(\psi_i)\cdot\mathcal S_{n-1}(\rho_{\eta, \epsilon})\right)
	\\&= O (\mathcal S_{2n-1}(\psi) \cdot y^{n-1-\delta} ) +O ( y^{n-1-s_1}\cdot \mathcal S_{n-1}(\psi_i) \mathcal S_{n-1}(\rho_{\eta, \epsilon}) )
	\\&= \mathcal S_{2n-1}(\psi) \cdot O( y^{n-1-\delta} +\epsilon^{-(2n-1)/2}y^{n-1-s_1}).
	\end{align*}
Hence for any $0<y<\epsilon$, using Proposition \ref{l:approx_matrix_coeff}, we deduce
	\begin{align*}&
	I_\eta(\psi)(a_y) = \left<a_y.\psi, \rho_{\eta, \epsilon}\right> +
\sum_{j=1}^{k-1}O\left(\left<a_y.\psi_j, \rho_{\eta, \epsilon}\right>(\epsilon+y)^j\right) +
 O_\psi ((\epsilon+y)^k)
\\&=\left<a_y.\psi, \rho_{\eta, \epsilon}\right> 
 + \mathcal S_{2n-1}(\psi) O(\epsilon\cdot y^{n-1-\delta} + \epsilon^{-(2n-1)/2} y^{n-1-s_1} +\epsilon^k)
	\\&=\sum_{\ell \in \hat K} \left<\psi, \phi_\ell\right>\left<a_y.\phi_\ell,
 \rho_{\eta, \epsilon}\right> +
\mathcal S_{2n-1}(\psi) O(\epsilon\cdot y^{n-1-\delta} + \epsilon^{-(2n-1)/2} y^{n-1-s_1} +\epsilon^k)
 \\&= \sum_{\ell \in \hat K} \left<\psi, \phi_\ell\right>  c_n(\ell) y^{n-1-\delta} +
 \\ & \sum_{\ell \in \hat K} \left<\psi, \phi_\ell\right>  \left( O( \e (|\ell|+1)^{\tfrac{n-2}2}  y^{n-1-\delta} + (|\ell|+1)^{\tfrac{3n-4}2}  y^{n-\delta} ) 
 +
O_\eta( (|\ell | +1)^{(n-2)/2}y^{\delta}) \right) 
\\&\quad\quad +
\mathcal S_{2n-1}(\psi) O(y^{\delta} +\e y^{n-1-\delta}+ \epsilon^{-(2n-1)/2} y^{n-1-s_1} +\epsilon^k).
\end{align*}

Since
$| \left<\psi, \phi_\ell\right> |\ll ( |\ell | +1)^{-(2n-2)}O(\mathcal S_{2n-2}(\psi))$,
we have
$$\sum_{\ell \in \hat K} \left<\psi, \phi_\ell\right> (|\ell |+1)^{(3n-4)/2}=O(1),\quad
\sum_{\ell \in \hat K} \left<\psi, \phi_\ell\right> (|\ell | +1)^{(n-2)/2}=O(1).$$

Hence
we deduce
\begin{multline*}I_\eta(\psi)(a_y)=
\sum_{\ell \in \hat K} \left<\psi, \phi_\ell\right> c_n(\ell) y^{n-1-\delta} \\  +
\mathcal S_{2n-1}(\psi) O(\e y^{n-1-\delta}+y^{n-\delta} +y^\delta +\epsilon^{-(2n-1)/2} y^{n-1-s_1} +\epsilon^k).\end{multline*}

By equating $\epsilon\cdot y^{n-1-\delta} $ and $ \epsilon^{-(2n-1)/2} y^{n-1-s_1}$,
we put $\e=y^{2(\delta-s_1)/(2n+1)}$ and obtain
$$I_\eta(\psi)(a_y) =\sum_{\ell \in \hat K} \left<\psi, \phi_\ell\right> c_n(\ell) y^{n-1-\delta}
+\mathcal S_{2n-1}(\psi) O(y^{n-1-\delta +\frac{2(\delta-s_1)}{2n+1}}).$$
\end{proof}

\section{Comparing main terms from different approaches}\label{bms}
The coefficient $\sum_{\ell\in \hat K}c_n(\ell) \la   \psi ,\phi_\ell\ra$ of the main term in Theorem \ref{m1} is related to
the space average of $\psi$ with respect to the Burger-Roblin measure (which we will call
the BR measure for short) by \cite{Ro} and \cite{KO} (also see \cite{OS}). 

Recall that $\phi_0=\phi_0^\G$ is given by
$$\phi_0(x+jy) = \int_{\br^{n-1}} \left(\tfrac{(|u|^2+1)y }{|x-u|^2 +y^2}\right)^\delta d\nu_j(u)$$
for the Patterson-Sullivan measure $\nu_j=\nu_j^\G$ on the boundary $\partial(\bH^n)$.
 Note that $\phi_0^\G(j)=|\nu_j^\G|.$
As before, we normalize $\nu_j$ so that $\|\phi_0\|_2=1$. 

For $\xi\in \partial(\bH^n)$ and $z_1, z_2\in \bH^n$,
recall the Busemann function:
$$\beta_\xi(z_1,z_2)=\lim_{s\to \infty}d(z_1,\xi_s)-d(z_2,\xi_s)$$
where $\xi_s$ is a geodesic ray tending to $\xi$ as $s\to \infty$.
 Using the identification of $\T^1(\bH^n)$ and $G/M$,
we give the definition of 
the Bowen-Margulis-Sullivan measure $m^{\BMS}$  on $\Gamma\ba G/M$.
For $u\in \T^1(\bH^n)$, we denote by $u^+$ and $u^-$ the forward and the backward endpoints
of the geodesic determined by $u$, respectively.
The correspondence $u\mapsto (u^+, u^-, \beta_{u^-}(j, \pi(u)))$ gives a homeomorphism between
the space $\T^1(\bH^n)$ with  
 $(\partial(\bH^n)\times \partial(\bH^n) - \{(\xi,\xi):\xi\in \partial(\bH^n)\})  \times \br $
where $\pi: G\to G/K=\bH^n$ is the canonical projection.
Define the measure $\tilde m^{\BMS}$ on $G/M$:
$$d \tilde m^{\BMS}(u)=e^{\delta \beta_{u^+}(j, \pi(u))}\;
 e^{\delta \beta_{u^-}(j,\pi(u)) }\;
d\nu_j(u^+) d\nu_j(u^-) dt $$
where $t=\beta_{u^-}(j, \pi(u))$.
This measure is left $\G$-invariant and hence induces a measure $m^{\BMS}$ on $\G\ba G/M$.

Roblin obtained the following interesting identity in his thesis \cite{RoT}:
\begin{thm}[Roblin] \label{rot} For $\delta >(n-1)/2$,
 $$\|\phi_0\|_2^2=|m^{\BMS}|\cdot\int_{\br^{n-1}} \tfrac{dx}{(1+|x|^2)^\delta} .$$
\end{thm}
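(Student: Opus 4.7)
The plan is to expand $\|\phi_0\|_2^2$ using the Poisson integral representation of $\phi_0$ in both factors, apply a single Fermi--Hopf change of variables on $\bH^n\times(\partial\bH^n)^{(2)}$ which factors the resulting density as the BMS measure on $\Gamma\bsl\T^1\bH^n$ times a convergent transverse piece, and then identify the transverse integral with $\int_{\br^{n-1}}(1+|x|^2)^{-\delta}\,dx$ via the substitution $r=\sinh s$.

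First, by the $K$-invariance of $\phi_0$ I would write $\|\phi_0\|_2^2=\int_{\Gamma\bsl\bH^n}\phi_0^2\,dV$ and substitute the integral formula for $\phi_0$ in both factors to obtain
\[
\|\phi_0\|_2^2 = \int_{\Gamma\bsl\bH^n}\!\iint_{(\partial\bH^n)^{(2)}} e^{\delta\beta_\xi(j,z)+\delta\beta_\eta(j,z)}\,d\nu_j(\xi)\,d\nu_j(\eta)\,dV(z).
\]
The density on $\bH^n\times(\partial\bH^n)^{(2)}$ appearing here is $\Gamma$-invariant under the diagonal action: the $\delta$-conformality $d\nu_j(\gamma\xi)=e^{-\delta\beta_{\gamma\xi}(j,\gamma j)}\,d\nu_j(\xi)$ combined with the Busemann cocycle $\beta_{\gamma\xi}(j,\gamma z)=\beta_{\gamma\xi}(j,\gamma j)+\beta_\xi(j,z)$ produce an exact cancellation, so the integral can be read over any Borel fundamental domain for the diagonal action.

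Next I would set up Fermi coordinates adapted to pairs of boundary points. For $(\xi,\eta)\in(\partial\bH^n)^{(2)}$, let $\mathcal G_{\xi\eta}$ denote the oriented geodesic from $\eta$ to $\xi$ and set $r_{\xi\eta}:=d(j,\mathcal G_{\xi\eta})$. Write $z\in\bH^n$ in Fermi coordinates $(t,s,\omega)\in\br\times(0,\infty)\times S^{n-2}$ about $\mathcal G_{\xi\eta}$, where $t$ is the signed arclength measured from the foot of perpendicular from $j$. Conjugating to the model $(\eta,\xi)=(0,\infty)$ and using $e^{\beta_u(j,z)}=\tfrac{(|u|^2+1)y}{|x-u|^2+y^2}$ gives the two identities
\[
dV(z)=\cosh(s)\sinh^{n-2}(s)\,dt\,ds\,d\omega,\qquad \beta_\xi(j,z)+\beta_\eta(j,z)=2\log\cosh(r_{\xi\eta})-2\log\cosh(s).
\]
Crucially, $(\xi,\eta,t)$ is precisely a Hopf coordinate system on $G/M=\T^1\bH^n$: it records the unit tangent vector $v$ at $\mathcal G_{\xi\eta}(t)$ with $v^+=\xi$, $v^-=\eta$. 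Thus $(z,\xi,\eta)\leftrightarrow(v,s,\omega)$ is a $\Gamma$-equivariant diffeomorphism onto $\T^1\bH^n\times(0,\infty)\times S^{n-2}$ off a measure-zero set, and because $\pi(v)\in\mathcal G_{\xi\eta}$ the factor $\cosh^{2\delta}(r_{\xi\eta})$ equals $e^{\delta\beta_{v^+}(j,\pi v)+\delta\beta_{v^-}(j,\pi v)}$, i.e.\ the Hopf density of $\tilde m^{\BMS}$.

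Combining these identities yields the clean factorization
\[
e^{\delta(\beta_\xi+\beta_\eta)(j,z)}\,d\nu_j(\xi)\,d\nu_j(\eta)\,dV(z)=d\tilde m^{\BMS}(v)\cdot(\cosh s)^{1-2\delta}(\sinh s)^{n-2}\,ds\,d\omega.
\]
Integrating over $\Gamma\bsl(\bH^n\times(\partial\bH^n)^{(2)})=(\Gamma\bsl\T^1\bH^n)\times(0,\infty)\times S^{n-2}$ separates the BMS factor, and the substitution $r=\sinh s$ (so $\cosh s=\sqrt{1+r^2}$ and $ds=dr/\cosh s$) turns the transverse integral via polar coordinates on $\br^{n-1}$ into
\[
\Vol(S^{n-2})\int_0^\infty(1+r^2)^{-\delta}r^{n-2}\,dr=\int_{\br^{n-1}}\frac{dx}{(1+|x|^2)^\delta},
\]
proving the identity. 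The hardest parts will be verifying the Busemann-sum identity in the second step with all signs correct, and coordinating the Fermi parameter $t$ with the BMS parameter $t=\beta_{v^-}(j,\pi v)$ (they differ only by a sign and an $(\xi,\eta)$-dependent shift, neither of which affects Lebesgue $dt$). The convergence of the transverse $s$-integral is exactly where the hypothesis $\delta>(n-1)/2$ enters: the integrand decays like $e^{(n-1-2\delta)s}$ at infinity.
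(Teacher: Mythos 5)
Your proposal is correct, but it necessarily takes a different route from the paper, because the paper does not prove Theorem \ref{rot} at all: the identity is quoted from Roblin's thesis \cite{RoT} and used as a black box. What you outline is essentially the standard proof, and the steps are consistent with the paper's conventions: with $\beta_\xi(z_1,z_2)=\lim_s\,(d(z_1,\xi_s)-d(z_2,\xi_s))$ one indeed has $e^{\beta_u(j,z)}=\tfrac{(|u|^2+1)y}{|x-u|^2+y^2}$, so $\phi_0(z)=\int e^{\delta\beta_\xi(j,z)}\,d\nu_j(\xi)$; your Busemann-sum identity is right (split $\beta_\xi(j,z)+\beta_\eta(j,z)$ at a point $p$ on the geodesic, note the sum is independent of $p$ and antisymmetric in the two base points), and for $z=\pi(v)$ on the geodesic it gives exactly $e^{\delta\beta_{v^+}(j,\pi(v))+\delta\beta_{v^-}(j,\pi(v))}=\cosh^{2\delta}r_{\xi\eta}$, which is the Hopf-coordinate density in the paper's definition of $\tilde m^{\BMS}$; finally the Fermi volume element together with $r=\sinh s$ yields $\Vol(S^{n-2})\int_0^\infty (1+r^2)^{-\delta}r^{n-2}\,dr=\int_{\br^{n-1}}(1+|x|^2)^{-\delta}\,dx$, convergent precisely when $\delta>(n-1)/2$. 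When you write it up, make three points explicit: (i) the diagonal $\{\xi=\eta\}$ is $\nu_j\otimes\nu_j$-null, since $\nu_j$ is non-atomic for a non-elementary geometrically finite group, so restricting to distinct pairs loses nothing; (ii) the transverse factor $(0,\infty)\times S^{n-2}$ really lives in the unit normal bundle of the geodesic, so the clean way to split off $\Vol(S^{n-2})$ is to work on $\G\ba G/M'$ with $M'\simeq \SO(n-2)$ (or on $\G\ba G$) and use the $M$-invariance of the BMS measure together with the rotation invariance of $d\omega$; for $n=2$ this is just the two sides of the geodesic; (iii) the identity presupposes the paper's normalization $dg=y^{-n}\,dx\,dy\,dk$ with $dk$ a probability measure, so that $\|\phi_0\|_{L^2(\G\ba G)}^2=\int_{\G\ba\bH^n}\phi_0^2\,dV$, while the normalization of $\nu_j$ is irrelevant since both sides are quadratic in $\nu_j$. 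Compared with the paper, your argument buys a self-contained proof, valid verbatim for all $n\ge 2$, which makes transparent where the hypothesis $\delta>(n-1)/2$ enters; the paper's citation buys only brevity.
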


As we have normalized $\nu_j$ so that $\|\phi_0\|_2=1$ and $\phi_0(j)=|\nu_j|$, we deduce
$$\frac{1}{|m^{\BMS}|}=\int_{\br^{n-1}} \tfrac{dx}{(1+|x|^2)^\delta}.$$

To describe the equidistribution result of $(N\cap \G)\ba N a_y$ from \cite{Ro}, we recall
 the measure $\tilde m^{\BR}_{N}$ defined in the introduction:
for $\psi\in C_c( G/M)$,
$$\tilde m^{\BR}_N(\psi)=
\int_{KAN}\psi(k a_yn_x)y^{\delta -1} dx dyd\nu_j(k(0)) .$$
%with $o$ being the origin in the boundary $\partial(\bH^2)=\br\cup\{\infty\}$.
The BR measure $m^{\BR}_{N}$ (associated to the stable horospherical subgroup $N$) 
is the measure on $\G\ba G/M$ induced from $\tilde m^{\BR}_N$.

We define the measure $\mu^{\PS}_N$ on $N$ by
 $$d\mu^{\PS}_N(n_x)=e^{-\delta\beta_{x} (j, x+j)} d\nu_j(x)={(1+|x|^2)^{\delta}} d\nu_j(x).$$ 
%where $\beta_\xi(x_1,x_2)$ denotes 
%the Busemann function, i.e., the signed distance between horocycles based at $\xi$ passing through $y_1$ and $y_2$.
This induces a measure on $(N\cap \G)\ba N$ for which we use the same notation $\mu_N^{\PS}$. 
Since $\mu^{\PS}_N$ is supported in $(N\cap \G)\ba (\Lambda(\G)-\{\infty\})$, which is compact, we have
 $ \mu^{\PS}_N ((N\cap \Gamma) \ba N)<\infty$.

%We recall the Burger-Roblin measure $m_N^{\BR}$ defined in the introduction.
The following is proved by Roblin \cite{Ro} when$(N\cap \G)\ba N$ is compact and 
in \cite{OS} in general.
\begin{thm} \label{ros} Let $\delta>0$ and $(N\cap \G)\ba N$ closed.
 For any $\psi\in C_c(\Gamma\ba G)^M$, 
$$ \lim_{y\to 0} y^{\delta-n+1} \cdot \psi^N(a_y) = \tfrac{\mu^{\PS}_N (N\cap \Gamma \ba N)}{|m^{\BMS}|} m^{\BR}_N(\psi) .$$
\end{thm}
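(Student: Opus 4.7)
The plan is to follow Roblin's thesis \cite{RoT}, whose proof of equidistribution of closed horospheres combines mixing of the geodesic flow with a thickening argument. The two key inputs are (i) the Babillot--Roblin mixing theorem for $m^{\BMS}$ on $\G\ba G/M$ under the $a_y$-action, valid whenever $|m^{\BMS}|<\infty$ (automatic for geometrically finite $\G$ with $0<\delta<n-1$); and (ii) the local product structure of $m^{\BMS}$ along the horospherical decomposition $G = N \cdot AMN^-$, whose conditionals on the stable and unstable leaves are, up to Busemann weights, $\mu_N^{\PS}$ and the visual measure $d\nu_j$ respectively --- so that $m^{\BR}_N$ and $\mu_N^{\PS}$ are two of the three factors in the Hopf-coordinate disintegration of $m^{\BMS}$.

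The first step is to thicken $\psi^N(a_y)$ into a matrix coefficient against $m^{\BMS}$. Fix $\psi\in C_c(\G\ba G)^M$, pick a compactly supported $\eta\in C_c((N\cap\G)\ba N)$ equal to $1$ on a neighborhood of the compact support of $\mu_N^{\PS}$ (such a compact support exists by Lemma~\ref{bow}), and choose a bump $\rho_\e$ on $AMN^-$ supported in a shrinking neighborhood $W_\e$ of $e$ with $\int \rho_\e \, d\nu = 1$. Set $\Phi_\e := \eta\otimes \rho_\e$ on $\G\ba G$ locally. Repeating the Iwasawa-coordinate computation of Proposition~\ref{l:approx_matrix_coeff} (at $\ell=0$), one obtains
$$\la a_y.\psi, \Phi_\e\ra \;=\; \psi^N(a_y) \;+\; o(y^{n-1-\delta}) \qquad (\e, y \to 0),$$
where the tail contribution outside $\mathrm{supp}(\eta)$ is controlled by the analogue of Lemma~\ref{es} and the geometric finiteness of $\G$.

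The second step rewrites $\la a_y.\psi, \Phi_\e\ra$ as an integral against $dm^{\BMS}$. Using Hopf coordinates $g \leftrightarrow (g^+, g^-, t)$ and the explicit formulas for $dg$, $d m^{\BMS}$, $dm^{\BR}_N$ in terms of $d\nu_j(g^\pm)\, dt$ weighted by $e^{\delta \beta_{g^\pm}}$, a direct change of variable yields
$$\la a_y.\psi, \Phi_\e\ra \;=\; y^{n-1-\delta}\cdot \int_{\G\ba G/M} \widetilde\psi(g)\,\widetilde\Phi_\e(g a_y^{-1})\, dm^{\BMS}(g),$$
where $\widetilde\psi$ and $\widetilde\Phi_\e$ are Busemann-reweighted versions satisfying $m^{\BMS}(\widetilde\psi) = m^{\BR}_N(\psi)$ and $m^{\BMS}(\widetilde\Phi_\e) \to \mu_N^{\PS}((N\cap\G)\ba N)$ as $\e \to 0$. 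Applying mixing of $a_y$ with respect to $m^{\BMS}$, one has
$$\int \widetilde\psi(g)\,\widetilde\Phi_\e(g a_y^{-1})\, dm^{\BMS}(g) \;\xrightarrow{y \to 0}\; \frac{m^{\BMS}(\widetilde\psi)\cdot m^{\BMS}(\widetilde\Phi_\e)}{|m^{\BMS}|},$$
and combining with the thickening estimate, letting $\e\to 0$ after $y\to 0$, gives the theorem.

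The main obstacle is the careful bookkeeping of Busemann and conformal weights through the Hopf decomposition to obtain the precise constant $\mu_N^{\PS}((N\cap\G)\ba N)/|m^{\BMS}|$; this is where the PS/BR duality along stable/unstable foliations must be made quantitative. A secondary but important technical issue is the non-compact case, when $\infty$ is a parabolic fixed point of $\G$ and $(N\cap\G)\ba N$ is noncompact: the thickening must be done uniformly in $y$, and one needs the boundedness of parabolic fixed points (geometric finiteness) to control both the support of $\mu_N^{\PS}$ via Lemma~\ref{bow} and the cusp-tail of $\psi^N(a_y)$ via cusp-decay estimates analogous to Lemma~\ref{es}.
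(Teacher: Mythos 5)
The paper does not prove Theorem \ref{ros} at all --- it quotes it from Roblin \cite{Ro} (for $(N\cap\G)\ba N$ compact) and from \cite{OS} in general --- and your outline (transversal thickening of the horospherical integral along $AMN^-$, mixing of the geodesic flow with respect to the finite measure $m^{\BMS}$, and the product structure of $m^{\BMS}$ whose stable/unstable conditionals produce the constant $\mu^{\PS}_N((N\cap\G)\ba N)/|m^{\BMS}|$) is exactly the strategy of those cited proofs, so your approach matches the one the paper relies on. The one imprecision is that your second display is not an exact change-of-variables identity but only an asymptotic statement: what one actually extracts from BMS-mixing is the decay of the Haar matrix coefficient, namely that $y^{\delta-n+1}\la a_y.\psi,\rho\ra$ converges to $m^{\BR}_N(\psi)$ times the unstable Burger--Roblin mass of the test function $\rho$, divided by $|m^{\BMS}|$, with that unstable mass of $\eta\otimes\rho_\e$ tending to $\mu^{\PS}_N((N\cap\G)\ba N)$ as $\e\to 0$ --- which is precisely how \cite{Ro} and \cite{OS} carry out the weight bookkeeping you defer.
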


Comparing the coefficients of the main terms of Theorem \ref{ros} and Theorem \ref{m1}, and using Theorem \ref{rot},
we deduce
the following identity of the Burger-Roblin measure considered
as a distribution on $\G\ba G$: 
\begin{thm}\label{brd} Let $\delta>(n-1)/2$. For any $\psi\in C_c^\infty(\Gamma\ba G)$,
 $$\kappa_{\G} \cdot m^{\BR}_N(\psi) =\sum_{\ell \in \hat K}  c_n(\ell)\left<\psi, \phi_{\ell}\right>$$
with $c_n(\ell)$ as in \eqref{cnl} and $\kappa_{\G}= \int_{\br^{n-1}} \tfrac{dx}{(1+|x|^2)^\delta}
\cdot  \int_{n_x\in (N\cap \G)\ba N} {(1+|x|^2)^{\delta}}{{d\nu_j(x)}}.$
\end{thm}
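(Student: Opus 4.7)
The plan is to read off the identity by comparing the two independent asymptotic formulas for $\psi^N(a_y)$ as $y\to 0$: the ergodic-theoretic formula of Theorem \ref{ros} and the spectral formula of Theorem \ref{m1}, and then to identify the resulting proportionality constant with $\kappa_\G$ by means of Roblin's identity (Theorem \ref{rot}).

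Concretely, I would fix $\psi\in C_c^\infty(\G\ba G)^M$ and note that Theorem \ref{m1} gives
$$ \psi^N(a_y) = \Big( \sum_{\ell\in\hat K} c_n(\ell)\la\psi,\phi_\ell\ra\Big)\, y^{n-1-\delta} + O\bigl(y^{n-1-\delta+\eta}\bigr) $$
for some $\eta>0$, while Theorem \ref{ros} states
$$ \psi^N(a_y) = \frac{\mu_N^{\PS}((N\cap\G)\ba N)}{|m^{\BMS}|}\, m_N^{\BR}(\psi)\, y^{n-1-\delta} + o\bigl(y^{n-1-\delta}\bigr). $$
Dividing both asymptotics by $y^{n-1-\delta}$ and letting $y\to 0$ equates the two leading coefficients:
$$ \sum_{\ell\in\hat K} c_n(\ell)\la\psi,\phi_\ell\ra = \frac{\mu_N^{\PS}((N\cap\G)\ba N)}{|m^{\BMS}|}\, m_N^{\BR}(\psi). $$
This step uses that $C_c^\infty(\G\ba G)^M$ is dense enough to test the distributional identity; for a general $\psi\in C_c^\infty(\G\ba G)$ one applies the identity to its $M$-average, noting that both sides of the claimed equality factor through this average (the left side because $m^{\BR}_N$ is $M$-invariant, the right side because each $\phi_\ell$ is $M$-invariant).

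It then remains to show that the constant on the right equals $\kappa_\G$. By the definition of $\mu_N^{\PS}$,
$$ \mu_N^{\PS}((N\cap\G)\ba N) = \int_{n_x\in(N\cap\G)\ba N} (1+|x|^2)^{\delta}\, d\nu_j(x), $$
so it suffices to identify $|m^{\BMS}|^{-1}$ with $\int_{\br^{n-1}}(1+|x|^2)^{-\delta}\, dx$. This is exactly Theorem \ref{rot} combined with the normalization $\|\phi_0\|_2=1$ adopted at the beginning of Section \ref{s:phl}. Substituting gives
$$ \frac{\mu_N^{\PS}((N\cap\G)\ba N)}{|m^{\BMS}|} = \int_{\br^{n-1}}\frac{dx}{(1+|x|^2)^{\delta}}\cdot \int_{n_x\in(N\cap\G)\ba N}(1+|x|^2)^{\delta}\, d\nu_j(x) = \kappa_\G, $$
completing the identification.

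The approach is essentially bookkeeping; the only subtle point is verifying that the sum $\sum_\ell c_n(\ell)\la\psi,\phi_\ell\ra$ does converge absolutely (so that it is genuinely the leading coefficient and not just a formal expression). This follows from the bound $c_n(\ell)=O((|\ell|+1)^{(n-2)/2})$ established in Theorem \ref{ub2} together with the rapid decay $|\la\psi,\phi_\ell\ra|\ll (|\ell|+1)^{-2i}\|\mC_K^i\psi\|_2$ from Lemma \ref{haha}, both of which are already in hand. No further analytic obstacle remains.
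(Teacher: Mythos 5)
Your proposal is correct and is essentially the paper's own argument: the paper proves Theorem \ref{brd} precisely by equating the leading coefficients of Theorem \ref{m1} and Theorem \ref{ros} and then identifying $\mu_N^{\PS}((N\cap\G)\ba N)/|m^{\BMS}|$ with $\kappa_\G$ via Roblin's identity (Theorem \ref{rot}) and the normalization $\|\phi_0\|_2=1$. Your added remarks on reducing to $M$-invariant $\psi$ by averaging and on the absolute convergence of $\sum_\ell c_n(\ell)\la\psi,\phi_\ell\ra$ are correct fillings-in of details the paper leaves implicit.
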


Now Theorem \ref{main2} is a direct consequence of Theorem \ref{brd} and Theorem \ref{main}.

\section{Application to counting in sectors}

Let $n=2$ or $3$. Let $Q$ be a real quadratic form of signature $(n,1)$ and $v_0\in \br^{n+1}$ be a non-zero vector
such that $Q(v_0)=0$. 
 Let $\G_0$ be a geometrically finite subgroup of the identity component
of $ \SO_Q(\br)$. 
Suppose that $\delta>(n-1)/2$ and that $v_0\G_0$ is discrete.

 Let $\|\cdot \|$ be {\it any} norm in $\br^{n+1}$ and set $B_T:=\{v\in \br^{n+1}: \|v\|<T\}$.
 Let $G=\PSL_2(\br)$ if $n=2$ and $\PSL_2(\c)$ if $n=3$. Let $\iota: G\to \SO_Q(\br)$ be a representation so that
the stabilizer of $v_0$ in $G$ via $\iota$ is
$NM$. Let $\Gamma:=\iota^{-1}(\Gamma_0)$.

\subsection{Counting I}
For $g\in G$, we write $\kappa (g)$ for the $K$-coordinate of $g$ in the Iwasawa decomposition $G=NAK$.
As before, $M$ denotes the centralizer of $A$ in $K$.
Fixing a function $f$ on $M\ba K$,
define
the counting function $F_T$ on $\G\ba G$ by
$$F_T(g)=\sum_{\gamma\in N\cap \Gamma\ba \Gamma} \chi_{B_T}(v_0 \gamma g) f([\kappa (\gamma g)])$$
where $\chi_{B_T}$ denotes the characteristic function of $B_T$. Since $\kappa (g)=\kappa(ng)$ for any $n\in N$ and $g\in G$,
$F_T$ is well-defined.

For $k\in K$ and $\psi\in C_c(G)$,
define $\psi^k\in C_c(G)^M$ by
$$\psi^k(g)=\int_{m\in M} \psi(g m k) dm $$
where $dm$ denotes the probability Haar measure of $M$.
Similarly, for $\Psi\in C_c(\G\ba G)$, we set $\Psi^k(g)=\int_{m\in M} \Psi(g m k) dm .$
\begin{lem}\label{uf}
 For $\Psi\in C_c(\G\ba G)$ and for any Borel function $f$ on $K$, we have
$$\la F_T, \Psi\ra =\int_{k\in  M\ba K} f([k]) \int_{y>\|v_0k\| T^{-1}}
 \left(\int_{(N\cap \G)\ba N} \Psi^k(n_xa_y) \; dx\right) y^{-n} dydk$$
where $dk$ denotes the probability Haar measure on $K$, also understood as the invariant measure on $M\ba K$.
\end{lem}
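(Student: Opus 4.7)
The plan is to unfold and use the Iwasawa decomposition $G=NAK$. Since $\Psi$ is left $\Gamma$-invariant, the pairing $\la F_T,\Psi\ra$ collapses to an integral over the smaller quotient $(N\cap\G)\ba G$:
$$\la F_T,\Psi\ra = \int_{(N\cap\G)\ba G}\chi_{B_T}(v_0 g)\,f([\kappa(g)])\,\Psi(g)\,dg.$$
I would then write $g = n_xa_yk$ in Iwasawa form, so that the Haar measure factors as $dg = y^{-n}\,dx\,dy\,dk$. Because $\kappa(n_xa_yk)=k$, the factor $f([\kappa(g)])$ becomes $f([k])$; and because the stabilizer of $v_0$ is $NM$, we have $v_0 n_x a_y k = v_0 a_y k$.

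The next key ingredient is the scaling identity $\|v_0 a_y k\| = y^{-1}\|v_0 k\|$, which holds under the normalization of $A$ compatible with the stated Iwasawa decomposition (the vector $v_0$ is a weight vector for $A$ in the signature-$(n,1)$ representation $\iota$, with the weight fixed by the stabilizer convention $\mathrm{Stab}_G(v_0)=NM$). This scaling converts the condition $\|v_0 g\|<T$ into $y>\|v_0 k\|T^{-1}$. Applying Fubini to bring the $K$-integral outside then yields
$$\la F_T,\Psi\ra = \int_K f([k])\int_{y>\|v_0 k\|/T}\int_{(N\cap\G)\ba N}\Psi(n_xa_yk)\,dx\,y^{-n}\,dy\,dk.$$

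Finally, I would convert the $K$-integral into one over $M\ba K$ by fibering $K\to M\ba K$ with fiber $M$. Since $v_0$ is also fixed by $M$, the bound $\|v_0 k\|$ is constant along each $M$-fiber, and since $f$ descends to a function on $M\ba K$, so is $f([k])$. Averaging the integrand along the fiber produces
$$\int_M \Psi(n_xa_ymk)\,dm = \Psi^k(n_xa_y),$$
which is precisely the claimed formula.

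The argument is largely bookkeeping, with no serious obstacle; the one computational point that deserves verification is the scaling identity $\|v_0 a_y k\| = y^{-1}\|v_0 k\|$, which encapsulates how the representation $\iota$ and the chosen $A$-parametrization interact. Everything else — the unfolding, the measure decomposition in Iwasawa coordinates, the Fubini interchange, and the $M$-averaging that produces $\Psi^k$ — is straightforward.
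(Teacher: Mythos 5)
Your proposal is correct and follows essentially the same route as the paper's proof: unfold the $\Gamma$-sum to an integral over $(N\cap\G)\ba G$, pass to Iwasawa coordinates $g=n_xa_yk$ with $dg=y^{-n}\,dx\,dy\,dk$, use $N\subset\op{Stab}_G(v_0)$ and the scaling $\|v_0a_yk\|=y^{-1}\|v_0k\|$ to turn $\chi_{B_T}$ into the condition $y>\|v_0k\|T^{-1}$, and fiber $K$ over $M\ba K$ to produce $\Psi^k$. The one point you flag for verification (the $A$-weight on $v_0$) is likewise taken for granted in the paper's computation, so there is no substantive difference.
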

\begin{proof}
We compute
\begin{align*}&\la F_T, \Psi\ra=
\int_{\G\ba G} \sum_{\gamma\in N\cap \Gamma\ba \Gamma} \chi_{B_T}(v_0 \gamma g) f(\kappa (\gamma g)) \Psi(g) \; dg\\ &=
 \int_{(N\cap \Gamma)\ba G} \chi_{B_T}(v_0 g) f(\kappa ( g)) \Psi(g) \; dg\\ &=
\int_{a_yk\in AK} \chi_{B_T}(v_0 a_yk)  f(k) \left(\int_{(N\cap \G)\ba N} \Psi(n_xa_yk) \; dx\right) y^{-n} dydk\\ &=
\int_{k\in M\ba K} \int_{y>\|v_0k\| T^{-1}}
 f(k) \left(\int_{(N\cap \G)\ba N} (\int_{m\in M}\Psi(n_xa_y m k)dm) \; dx\right) y^{-n} dydk .
\end{align*}
 \end{proof}

By Theorem \ref{main2}, for any $\Psi\in C^\infty(\G\ba G)^M$ and $k\in M\ba K$, we have, as $y\to 0$,
\begin{align*}\int_{(N\cap \Gamma)\ba N} \Psi^k (n_x a_y ) dx &= 
 \kappa_\G \cdot
{m_N^{\BR}(\Psi^k)} \cdot y^{n-1-\delta}
+\mathcal S_{2n-1}(\Psi) O(y^{(n-1-\delta)+\tfrac{2{\bf s_\G}}{2n+1}}) .\end{align*}

Therefore, we deduce from Lemma \ref{uf}:

\begin{thm}\label{fmc} For any $\Psi\in C^\infty(\G\ba G)$ and
 a bounded Borel function $f$ on $M\ba K$, we have, as $T\to \infty$,
 $$\la F_T, \Psi\ra =\tfrac{ \kappa_\G} {\delta} \cdot \left(\int_{k\in M\ba K} \tfrac{m_N^{\BR}(\Psi^k)\cdot f(k) }{\|v_0k\|^\delta} \; dk \right) \cdot  T^\delta +
O(\mathcal S_{2n-1}(\Psi) T^{\delta-\tfrac{2{\bf s_\G}}{2n+1}}). $$
\end{thm}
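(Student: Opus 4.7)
The plan is to combine Lemma \ref{uf} with the effective equidistribution from Theorem \ref{main2} applied to $\Psi^k$, followed by the outer $y$- and $k$-integrations. Starting from Lemma \ref{uf}, I would substitute
$$\int_{(N\cap \G)\ba N} \Psi^k(n_xa_y) \, dx = \kappa_\G \cdot m_N^{\BR}(\Psi^k) \cdot y^{n-1-\delta} + \mathcal S_{2n-1}(\Psi^k) \cdot O\bigl(y^{(n-1-\delta)+\tfrac{2{\bf s_\G}}{2n+1}}\bigr)$$
into the inner integrand. The elementary computation
$$\int_{\|v_0k\|T^{-1}}^{\infty} y^{n-1-\delta}\cdot y^{-n}\,dy = \frac{1}{\delta}\Bigl(\frac{\|v_0k\|}{T}\Bigr)^{-\delta} = \frac{T^\delta}{\delta\|v_0k\|^\delta}$$
produces, after integrating in $k\in M\ba K$ against $f$, the advertised main term; analogously the error term integrates in $y$ to $O(T^{\delta -\tfrac{2{\bf s_\G}}{2n+1}})$, since $\tfrac{2{\bf s_\G}}{2n+1}<\delta$ makes the exponent $-1-\delta+\tfrac{2{\bf s_\G}}{2n+1}$ strictly less than $-1$.

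Two technical points require care. First, Theorem \ref{main2} is valid only for $y$ small, so I would fix a threshold (say $y_0=1$) and split the $y$-integral at $y_0$. On the tail $y>y_0$, one uses that $\Psi$ is effectively compactly supported in the cusp direction, exactly as in the choice of the set $J\subset F_0$ made at the start of the proof of Theorem \ref{m1}; the resulting tail contribution is $O_\Psi(1)$, which is absorbed into the $T^{\delta-\tfrac{2{\bf s_\G}}{2n+1}}$ error. Second, I need the uniform Sobolev bound $\mathcal S_{2n-1}(\Psi^k)\ll \mathcal S_{2n-1}(\Psi)$ for $k\in K$; since $\Psi^k(g)=\int_M \Psi(gmk)\,dm$, the identity $\exp(tX)m=m\exp(t\op{Ad}(m^{-1})X)$ allows one to differentiate under the integral, and the compactness of $M$ makes $\{\op{Ad}(m^{-1})X_i\}_{m\in M}$ a uniformly bounded family of vector fields; combined with the right $G$-invariance of the measure on $\G\ba G$, this gives the bound with implicit constant independent of $k$.

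To finish, I would integrate in $k\in M\ba K$: since $v_0\neq 0$ and $K$ is compact, $\inf_{k\in K}\|v_0k\|>0$, hence $\int_{M\ba K}\tfrac{|f(k)|}{\|v_0k\|^\delta}\,dk<\infty$ and $k\mapsto m_N^{\BR}(\Psi^k)$ is bounded, so the main term assembles to
$$\tfrac{\kappa_\G}{\delta}\Bigl(\int_{M\ba K}\tfrac{m_N^{\BR}(\Psi^k)\,f(k)}{\|v_0k\|^\delta}\,dk\Bigr)T^\delta,$$
and the error estimate is preserved since $f$ is bounded and $K$ has finite measure. The main obstacle is bookkeeping rather than any new idea: ensuring the uniformity of the equidistribution estimate in $k$, verifying the Sobolev comparison for $\Psi^k$, and handling the tail $y>y_0$ cleanly. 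No deeper ingredient beyond Theorem \ref{main2} is required.
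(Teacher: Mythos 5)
Your proposal is correct and follows essentially the same route as the paper, which deduces Theorem \ref{fmc} directly by inserting the asymptotic of Theorem \ref{main2} for $\Psi^k$ into the formula of Lemma \ref{uf} and performing the $y$- and $k$-integrations. The additional bookkeeping you supply (the splitting at $y_0$, the uniform bound $\mathcal S_{2n-1}(\Psi^k)\ll \mathcal S_{2n-1}(\Psi)$, and the convergence of the $k$-integral) is exactly the routine detail the paper leaves implicit.
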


\subsection{Counting II}
For a left $M$-invariant Borel subset $\Omega\subset K$ and $T>0$,
define
$$S_T(\Omega):=\{ v\in v_0A\Omega :\|v\|<T\}. $$

For a subset $I$ of $\z_{\ge 0}$,
 let $\{\G_d <\G_0: d\in I\}$ be a family of  subgroups of finite index which
 satisfies $\text{Stab}_{\G_0} {v_0} =\text{Stab}_{\G_d} {v_0} $ and
which has a uniform spectral gap, say, ${\s}_0$.

%Theorem \ref{main2} can be used to prove the following counting theorem (this can be done
%precisely the same way as in \cite{KO1}):
Set \begin{equation}\label{exi} \Xi_{v_0}(\G_0, \Omega):=\tfrac{\kappa_{\iota^{-1}(\G_0)}}{\delta} 
\int_{k^{-1}\in \Omega} \tfrac{ d\nu_j^{\Gamma}(k(0))} {\|v_0k^{-1} \|^{\delta}} . \end{equation}

We deduce the following from Theorem \ref{fmc}:

\begin{thm}\label{ec}
Let $\Omega$ be an admissible $M$-invariant Borel subset of $K$ and let $q_\Omega>0$ be as in \eqref{qo}.
 For any $\gamma'\in \G_0$, $$\# (v_0\Gamma_d \gamma'\cap S_T(\Omega))=
\tfrac{\Xi_{v_0}(\G_0, \Omega)}{ [\G_0:\G_d]} 
 T^\delta +O(T^{\delta -\tfrac{8{\s}_0}{n(n+9)(2n+1)q_\Omega}})$$
with the implied constant independent of $d$ and $\gamma'$.
\end{thm}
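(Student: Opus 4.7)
The plan is to deduce Theorem \ref{ec} from Theorem \ref{fmc} by specializing the test functions and unsmoothing carefully in both the $G$- and $K$-variables. Set $\Gamma := \iota^{-1}(\Gamma_d)$ and take $f := \chi_\Omega$, an $M$-invariant bounded Borel function on $K$. Using $v_0 N = v_0$ together with the Iwasawa decomposition, the counting function $F_T$ of Theorem \ref{fmc}, evaluated at a lift $g' \in G$ of $\gamma' \in \Gamma_0$, satisfies $F_T(g') = \#(v_0 \Gamma_d \gamma' \cap S_T(\Omega))$, so the task reduces to an effective pointwise estimate for $F_T$.

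To pass from point evaluation to the inner product covered by Theorem \ref{fmc}, I introduce two simultaneous smoothings. Let $\Psi_\epsilon \in C_c^\infty(\Gamma\backslash G)$ be the pushforward of a smooth unit-mass bump on an $\epsilon$-ball around $g'$; standard estimates yield $\mathcal{S}_{2n-1}(\Psi_\epsilon) = O(\epsilon^{-a_n})$ with $a_n := (2n-1) + \dim(G)/2$. Independently, sandwich $\chi_\Omega$ by smooth $M$-invariant functions $f_{\epsilon,\pm} \colon K \to [0,1]$ whose difference is supported in an $O(\epsilon)$-neighborhood of $\partial\Omega$. The Lipschitz controls $\|v_0 g h\| = (1+O(\epsilon))\|v_0 g\|$ and $\kappa(\gamma g'h) \in \kappa(\gamma g')\cdot K_{O(\epsilon)}$ for $h$ near $e$ yield the sandwich
\begin{equation*}
\langle F_{T(1-C\epsilon)}^{f_{\epsilon,-}}, \Psi_\epsilon\rangle \;\le\; F_T(g') \;\le\; \langle F_{T(1+C\epsilon)}^{f_{\epsilon,+}}, \Psi_\epsilon\rangle,
\end{equation*}
where the superscript on $F_T$ indicates the choice of the weight function $f$.

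Applying Theorem \ref{fmc} to each bounding pair, the main term has the shape $\tfrac{\kappa_\Gamma}{\delta}\int_{M\backslash K} \tfrac{m_N^{\BR}(\Psi_\epsilon^k)\, f_{\epsilon,\pm}(k)}{\|v_0 k\|^\delta}\,dk \cdot (T(1\pm C\epsilon))^\delta$. As $\epsilon \to 0$, the functional $k \mapsto m_N^{\BR}(\Psi_\epsilon^k)$ concentrates on the $K$-component of $g'$; a direct computation in the Iwasawa coordinates defining $m_N^{\BR}$ identifies the limiting weight as $d\nu_j(k(0))/\|v_0 k^{-1}\|^\delta$, producing $\Xi_{v_0}(\Gamma_0, \Omega)T^\delta/[\Gamma_0:\Gamma_d]$. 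The index factor arises because the normalization $\|\phi_0^{\Gamma_d}\|_2 = 1$ dilates $\nu_j^{\Gamma_d}$ by $[\Gamma_0:\Gamma_d]^{-1/2}$ relative to $\nu_j^{\Gamma_0}$, and this dilation enters twice in $\kappa_\Gamma\int\cdots d\nu_j^{\Gamma_d}$ defining $\Xi$ at level $d$. Admissibility \eqref{qo} bounds the $f_{\epsilon,+}-f_{\epsilon,-}$ discrepancy in the main term by $O(\epsilon^{q_\Omega}T^\delta)$, while Theorem \ref{fmc} contributes $O(\epsilon^{-a_n} T^{\delta - 2{\s}_0/(2n+1)})$. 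Summing,
\begin{equation*}
F_T(g') = \frac{\Xi_{v_0}(\Gamma_0, \Omega)}{[\Gamma_0:\Gamma_d]} T^\delta + O\bigl(\epsilon T^\delta + \epsilon^{q_\Omega} T^\delta + \epsilon^{-a_n} T^{\delta - 2{\s}_0/(2n+1)}\bigr),
\end{equation*}
and setting $\epsilon = T^{-c}$ with $c$ chosen to balance the two dominant error terms produces the announced exponent.

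The main obstacle is the uniformity in both $d$ and $\gamma'$: the uniform spectral gap ${\s}_0$ of Bourgain--Gamburd--Sarnak ensures the Sobolev error is independent of $d$, while uniformity in $\gamma'$ follows because the bump and its Sobolev norm depend only on local data around $e$. A subsidiary technical point is the explicit identification of the limit of $m_N^{\BR}(\Psi_\epsilon^k)$, for which one exploits that in Iwasawa coordinates $m_N^{\BR}$ takes the form $y^{\delta-1}\,dx\,dy\,d\nu_j(k(0))$ and $\Psi_\epsilon^k$ localizes on a tubular neighborhood determined by $g'$ and $k$.
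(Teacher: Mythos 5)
Your proposal follows the same route as the paper's proof: specialize Theorem \ref{fmc}, replace the point evaluation at $\gamma'$ by pairing with an $\e$-bump $\Psi_\e$ on $\G_d\ba G$, sandwich the sharp cutoffs in $T$ and in $\Omega$, use admissibility \eqref{qo} to bound the boundary discrepancy by $O(\e^{q_\Omega}T^\delta)$, use the uniform spectral gap $\s_0$ for uniformity in $d$, and balance $\e$ against $T$; your Sobolev bound $\e^{-a_n}$ with $a_n=(2n-1)+\dim(G)/2$ is exactly the paper's $\e^{-(n^2+9n-4)/4}$. The only real differences are cosmetic: the paper keeps indicator weights but enlarges/shrinks the sector to $\Omega_{\e\pm}$ (allowed since Theorem \ref{fmc} permits bounded Borel $f$) and quotes the strong wavefront lemma of \cite{GOS} for the inclusions $S_T(\Omega)U_{\ell_0\e}\subset S_{(1+\e)T}(\Omega_{\e+})$, whereas you smooth $\chi_\Omega$ and argue by direct Lipschitz estimates; both are fine here because $v_0$ is $N$-invariant and the Iwasawa decomposition is used, so the perturbation in the $N$-direction is killed by $v_0$. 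Your accounting of the index factor via $\nu_j^{\G_d}=[\G:\G_d]^{-1/2}\nu_j^{\G}$ entering twice is also the paper's.

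The one step where your justification is not right is the identification of the main term. The functional $k\mapsto m_N^{\BR}(\Psi_\e^k)$ does not converge pointwise and does not ``concentrate on the $K$-component of $g'$'': $m^{\BR}_N$ is singular with respect to Haar measure, so for fixed $k$ this quantity vanishes or blows up according to whether the support of $\Psi^k_\e$ meets the support of $m^{\BR}_N$; and if it did localize at $\kappa(g')$, the main term would degenerate to an evaluation of $\chi_\Omega$ near one point and would depend on $\gamma'$, contradicting the very statement being proved. What is true, and what the paper uses, is the \emph{integrated} identity \eqref{osss}: $\int_{k\in\Omega}\|v_0k\|^{-\delta}\,m^{\BR}_{\G,N}(\Psi^k_{\G,\e})\,dk=(1+O(\e))\int_{k\in\Omega^{-1}}\|v_0k^{-1}\|^{-\delta}\,d\nu_j^{\G}(k(0))$, proved in \cite[Prop. 6.2]{OS} (see also \cite[Sec. 7]{KO}) by unfolding $\tilde m^{\BR}_N$ in the $KAN$ coordinates against the bump; the left $\G$-invariance of $\tilde m^{\BR}_N$ is also the reason the main term does not see the translate $\gamma'$. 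Since you do write down the correct limiting weight and the remainder of your argument matches the paper, this is a repairable defect in the justification of the key constant rather than a wrong approach, but as stated the ``concentration'' argument would not survive scrutiny and should be replaced by (a proof of) \eqref{osss}.
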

\begin{proof}
Let $\Gamma:=\iota^{-1}(\Gamma_0)$ and $\gamma_0:=\iota^{-1}(\gamma')$. Recall
that $U_\e$ denotes an $\e$-neighborhood of $e$ in $G$.
By abuse of notation, we use the notation $\Gamma_d$ to denote
$\iota^{-1}(\Gamma_d)$.
%Since $\delta>(n-1)/2$, $\G$ is Zariski dense and hence any proper algebraic subset of $\partial(\bH^n)$ has $\nu_j$-measure zero \cite{FS}.
%Hence $\nu_j(\partial(\Omega^{-1}(0)))=0$.
 Moreover, for all sufficiently small $\e>0$,
 and for $K_\e:=U_\e\cap K$, we have, for $\Omega_{\e+}=\Omega K_\e$ and 
  $\Omega_{\e-}=\cap_{k\in K_\e} \Omega k$,
 \begin{equation}\label{eq:600}
 \nu_j(\Omega_{\e +}^{-1}(0) -  \Omega_{\e-}^{-1}(0))=O( \e^{q'_\Omega}).
 \end{equation}

By the strong wave front lemma \cite[Theorem 4.1]{GOS},  there exists $0<\ell_{0}<1$ such that for $T\gg 1$,
$$S_T(\Omega) U_{\ell_0 \e} \subset S_{(1+\e)T}(\Omega_{\e+}) \quad\text{and}\quad
S_{(1-\e)T}(\Omega_{\e-})\subset\cap_{u\in U_{\ell_0 \e}} S_T(\Omega) u .$$

Let
 $\psi_\e \in C_c^\infty(G)$ be a non-negative function supported in $U_{\ell_0 \e}$ with integral one, and
define the following function of $\G_d\ba G$: $$\Psi_{\G_d,\e}(g)=\sum_{\gamma\in \G_d} \psi_\e(\gamma g) .$$

Define the counting function $F_T^{\Omega}$ on $\G_d\ba G$ by
$$F_T^{\Omega}(g)=\sum_{\gamma\in N\cap \Gamma\ba \Gamma_d} \chi_{S_T(\Omega)}(v_0 \gamma g)=
\sum_{\gamma\in N\cap \Gamma\ba \Gamma_d} \chi_{B_T}(v_0 \gamma g)\chi_\Omega ([\kappa(\gamma g)]) ;$$
this is well-defined as $\G_d\cap N=\G\cap N$ by the assumption on $\G_d$.
 Note that $F_T^{\Omega}(\gamma_0)=\# v_0\Gamma_d \gamma_0\cap S_T(\Omega)$
and that
$$ F_{(1-\e)T}^{\Omega_{\e-}}(\gamma_0 g)\le
 F_T^{\Omega}(\gamma_0) \le F_{(1+\e)T}^{\Omega_{\e+}}(\gamma_0 g) $$
for all $g\in U_{\ell_0\e}$.
On the other hand,
$$
\int_{\G_d\ba G} F_{(1\pm\e)T}^{\Omega_{\e\pm}}(\gamma_0 g) \Psi_{\G_d,\e} (g) dg
=\int_{\G_d\ba G} F_{(1\pm \e)T}^{\Omega_{\e\pm}}( g) \Psi_{\G_d, \e} (\gamma_0^{-1} g) dg.$$

Therefore, if we set $\Psi_{\G_d,\e}^{\gamma_0}(g):= \Psi_{\G_d,\e} (\gamma_0^{-1} g) $, we have
\[  
\la F_{(1-\e)T}^{\Omega_{\e-}} ,\Psi_{\G_d,\e}^{\gamma_0}\ra \le F_{T}^{\Omega}(\gamma_0)
\le\la  F_{(1+\e)T}^{\Omega_{\e+}},\Psi_{\G_d,\e}^{\gamma_0}\ra
\]
where the inner product has taken place in $L^2(\G_d\ba G)$.
Since $\phi_0^{\Gamma}(e)=|\nu_j^\Gamma|$ and $\|\phi_0^\G\|_2=1$,
we note that for all positive integer $d$,
$$\nu_j^{\Gamma_d}=\tfrac{1}{\sqrt{[\G:\G_d]}}\nu_j^\Gamma .$$
Therefore
it follows that $\kappa_{\iota^{-1}(\G_d)}= \frac{1}{\sqrt{[\G:\G_d]}}\kappa_{\iota^{-1}(\G)}$, and 
that for any $\G$-invariant continuous function $f$ on $G/M$,
 $$m^{\BR}_{\G_d, N}(f)=\tfrac{1}{\sqrt{[\G:\G_d]}} m^{\BR}_{\G,N}(f).$$ 
We use the following (see \cite[Prop. 6.2]{OS}, or \cite[Sec. 7]{KO}):
\be\label{osss} \int_{k \in \Omega}
 \tfrac{  m^{\BR}_{\G,N}(\Psi_{\G,\e}^{k})}{\|v_0k\|^\delta}
 dk =\int_{k\in \Omega^{-1}} \tfrac{ d \nu_j^{\G}(k(0))}{\|v_0k^{-1}\|^\delta} \cdot (1+O(\e)). \ee

Since $\text{dim}(G)=n(n+1)/2$, we compute
$\mathcal S_{2n-1}(\Psi_\e)=O(\e^{-(n^2+9n-4)/4})$.
Hence putting these together and using Theorem \ref{fmc}, we have 
\begin{align*} 
&\la F_{(1\pm\e)T}^{\Omega_{\e\pm}}, \Psi_{\G_d,\e}^{\gamma_0} \ra 
 \\&=\tfrac{\kappa_\G \cdot {(1\pm \e)}^\delta }{\delta [\G:\G_d]  }  T^\delta 
\int_{k\in \Omega^{-1}_{\e \pm}}\tfrac{d \nu_j(k(0))}{\|v_0k^{-1}\|^\delta} 
+O(\e T^\delta + \e^{{-(n^2+9n-4)/4}} T^{\delta -\tfrac{2{\s}_0}{2n+1}}) 
 \\ &= 
\tfrac{\kappa_\G \cdot T^{\delta}}{\delta  [\G:\G_d]} \int_{k\in \Omega^{-1}_{\e \pm}} 
\tfrac{ d \nu_j(k(0))}{\|v_0k^{-1}\|^\delta} 
+O(\e T^\delta + \e^{{-(n^2+9n-4)/4}} T^{\delta-\tfrac{2{\s}_0}{2n+1}}) 
\\ &=
 \tfrac{\kappa_\G \cdot T^{\delta}}{\delta  [\G:\G_d]} \int_{k\in \Omega^{-1}} 
\tfrac{ d \nu_j(k(0))}{\|v_0k^{-1}\|^\delta} 
+O(\e^{q_\Omega} T^\delta + \e^{{-(n^2+9n-4)/4}} T^{\delta -\tfrac{2{\s}_0}{2n+1}}) . 
\end{align*}

Hence, by equating $\e^{- (n^2+9n-4)/4}T^{-\tfrac{2{\s}_0}{(2n+1)}}=\e^{q_\Omega}$, we deduce
$$F_T^{\Omega}(e)=\tfrac{\kappa_\G \cdot T^{\delta}}{\delta [\G:\G_d]} \int_{k\in \Omega^{-1}}\tfrac{ d \nu_j(k(0))}{\|v_0k^{-1}\|^\delta} 
+O( T^{\delta -\tfrac{8{\s}_0}{n(n+9)(2n+1)q_\Omega}} ).$$
 This finishes the proof of Theorem \ref{ec}.

\end{proof}

\subsection{}\label{aps}
Let $\P$ be an Apollonian packing as in Theorem \ref{m}. Let 
$$Q(x_1,x_2,x_3,x_4)=2(x_1^2+x_2^2+x_3^2+x_4^2)-(x_1+x_2+x_3+x_4)^2$$
be the Descartes quadratic form, which has signature $(3,1)$.
Let $\mathcal A$ denote the Apollonian group, i.e., the subgroup of $O_Q(\Z)$ generated by 
$$S_1=\begin{pmatrix}  -1&0&0&0 \\2&1&0&0\\
2&0&1&0\\ 2&0&0&1 \end{pmatrix},\quad
 S_2=\begin{pmatrix} 1&2&0&0\\
 0&-1& 0&0 \\
0&2&1&0\\ 0&2&0&1 \end{pmatrix}, $$
$$
 S_3=\begin{pmatrix} 1&0&2&0\\
 0&1&2&0\\ 0&0& -1 &0 \\
 0&0&2&1 \end{pmatrix}, \quad  S_4=\begin{pmatrix} 1&0&0&2\\
 0&1&0&2 \\ 0&0&1&2\\ 0&0&0&-1 \end{pmatrix}. $$

The critical exponent of $\mathcal A$ is equal to $\alpha$, which is
the Hausdorff dimension of the residual set of $\P$ and is a geometrically finite group (cf. \cite{KO}).

In \cite[Sec. 2]{KO}, 
it was shown that there exists a vector $v_0$ with $Q(v_0)=0$, whose coordinates are given by
the curvatures of four mutually tangent circles of $\P$ and that
$$N_T(\P)=\{v\in v_0\mathcal A: \|v\|_{\max}<T \}+3$$
for all $T\gg 1$.

Therefore Theorem \ref{ec} implies, as $q_K=1$: 

\begin{cor} \label{apm} For some $c_\P>0$,
$$N_T(\P)= c_\P \cdot T^{\alpha} +O(T^{\alpha -2{\s}_{\mathcal A}/63}) .$$ 
 \end{cor}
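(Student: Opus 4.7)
The plan is to apply Theorem \ref{ec} directly, with all data read off from the Apollonian setup in Section \ref{aps}. I would take $n=3$, $\Gamma_0:=\mathcal A$ together with the trivial congruence family $\Gamma_d:=\mathcal A$ (so $[\Gamma_0:\Gamma_d]=1$ and the uniform spectral gap hypothesis collapses to the single gap ${\s}_{\mathcal A}$ furnished by Lax--Phillips), $\gamma'=e$, the max-norm $\|\cdot\|_{\max}$ on $\br^4$, and $\Omega=K$.

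Before invoking Theorem \ref{ec}, I would verify the three hypotheses. First, $\mathcal A$ is geometrically finite with critical exponent $\alpha\approx 1.3056>1=(n-1)/2$, as recalled in Section \ref{aps}. Second, $v_0\mathcal A\subset\Z^4$ is discrete because $v_0\in\Z^4$ and $\mathcal A<O_Q(\Z)$. Third, $\Omega=K$ is admissible with $q_K=1$: the set $\Omega^{-1}(0)=K\cdot 0$ is all of $\partial(\bH^3)\cong S^2$ (since $K=\PSU(2)$ acts transitively on the boundary sphere), so its topological boundary in $\partial(\bH^3)$ is empty and the $\nu_j$-estimate \eqref{qo} holds trivially for $q=1$.

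With these choices, Theorem \ref{ec} yields
$$\#\{v\in v_0\mathcal A:\|v\|_{\max}<T\}=\Xi_{v_0}(\mathcal A,K)\cdot T^{\alpha}+O\!\left(T^{\alpha-\frac{8{\s}_{\mathcal A}}{3\cdot 12\cdot 7}}\right),$$
and the numerical exponent simplifies to $\alpha-\tfrac{2{\s}_{\mathcal A}}{63}$. Setting $c_\P:=\Xi_{v_0}(\mathcal A,K)$, the constant is strictly positive because $\nu_j$ is supported on the non-empty limit set $\Lambda(\mathcal A)$ and the integrand $\|v_0k^{-1}\|^{-\alpha}$ in \eqref{exi} is everywhere positive on $K$. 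Finally, using the identity $N_T(\P)=\#\{v\in v_0\mathcal A:\|v\|_{\max}<T\}+3$ from \cite[Sec.~2]{KO} that was recalled in this section, the additive constant $3$ is trivially absorbed into the error term, giving the claim. There is no real obstacle beyond bookkeeping: the analytic substance has been done in Theorem \ref{ec}, which in turn rests on the effective horospherical equidistribution in Theorem \ref{main2}; verifying admissibility of $\Omega=K$ and tracking the numerical constant $\tfrac{8}{n(n+9)(2n+1)}=\tfrac{2}{63}$ at $n=3$ is the only task.
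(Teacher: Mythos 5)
Your overall route is the paper's route (reduce to Theorem \ref{ec} via the identity $N_T(\P)=\#\{v\in v_0\mathcal A:\|v\|_{\max}<T\}+3$, take $\Omega=K$ with $q_K=1$, and compute $\tfrac{8}{3\cdot 12\cdot 7}=\tfrac{2}{63}$), but there is a genuine gap at the very first step: Theorem \ref{ec} cannot be applied with $\Gamma_0=\mathcal A$. That theorem is stated for a geometrically finite subgroup $\Gamma_0$ of the \emph{identity component} of $\SO_Q(\br)$, and its proof runs through $\Gamma=\iota^{-1}(\Gamma_0)$ for the representation $\iota\colon \PSL_2(\c)\to \SO_Q(\br)$ whose image is that identity component; moreover everything upstream (Theorems \ref{main}, \ref{main2}, \ref{fmc}) assumes $\Gamma$ is torsion-free. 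The Apollonian group is generated by the four reflections $S_1,\dots,S_4$, each of determinant $-1$ and of order two, so $\mathcal A\not\subset \SO(Q)^\circ$ and $\mathcal A$ has torsion; in particular $\iota^{-1}(\mathcal A)$ only captures the part of $\mathcal A$ lying in the identity component, the quantity $\Xi_{v_0}(\mathcal A,K)$ is not defined by \eqref{exi} within the allowed framework, and a verbatim application of Theorem \ref{ec} would at best count the smaller orbit $v_0(\mathcal A\cap\SO(Q)^\circ)$ rather than $v_0\mathcal A$. Saying "there is no real obstacle beyond bookkeeping" elides exactly this hypothesis failure.

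The paper repairs this by passing to a torsion-free finite-index subgroup $\mathcal A_0<\SO(Q)^\circ$ of $\mathcal A$ and writing the full orbit as a finite disjoint union $v_0\mathcal A=\cup_{i=1}^m v_i\mathcal A_0$; Theorem \ref{ec} is then applied to each orbit $v_i\mathcal A_0$ separately (each $v_i\in\z^4$, so $v_i\mathcal A_0$ is discrete; $\mathcal A_0$ is geometrically finite with the same critical exponent $\alpha>1$ since it has finite index; $\Omega=K$ is admissible with $q_K=1$ and the relevant spectral gap is that of $\mathcal A_0$), and the $m$ asymptotics are summed. This is precisely why the constant comes out as $c_\P=\sum_{i=1}^m \Xi_{v_i}(\mathcal A_0,K)$ in \eqref{cpp}, not as a single $\Xi_{v_0}(\mathcal A,K)$; positivity then follows, as you argue, because the interior of $K(0)=\partial(\bH^3)$ meets $\Lambda(\mathcal A_0)=\Lambda(\mathcal A)$. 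Your remaining verifications (discreteness of the integral orbit, $q_K=1$ since $K(0)$ has empty boundary, absorption of the additive constant $3$, and the exponent arithmetic) are correct and agree with Section \ref{aps}; the missing idea is the passage to a torsion-free finite-index subgroup of the identity component together with the orbit decomposition, without which the appeal to Theorem \ref{ec} is unjustified.
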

Moreover, if we set $\A_0<\SO(Q)^\circ$ to be a torsion free finite index subgroup of $\A$
and write $v_0\mathcal A$ as the disjoint union $\cup_{i=1}^m v_i\mathcal A_0$,
then
\be\label{cpp} c_\P=\sum_{i=1}^m \Xi_{v_i}(\mathcal A_0, K) \ee
where $\Xi_{v_i}$ is defined as in \eqref{exi}.

On the other hand, it can be deduced from
the main results in \cite{OhShahcircle} that 
$$\lim_{T\to \infty}\frac{N_T(\P)}{T^\alpha}=c_A\cdot \mathcal H_\alpha(\Res(\P))$$
where $c_A>0$ is a constant independent of $\P$ (cf. \cite{Oh} for details).
Therefore Theorem \ref{m} follows from Corollary \ref{apm}.

\end{document}